\newtheorem{theorem}{“еорема}[section]
\newtheorem{lemma}[theorem]{Lemma}
\newtheorem{proposition}[theorem]{Proposition}
\newtheorem{corollary}[theorem]{Corollary}
\theoremstyle{definition}
\newtheorem{definition}[theorem]{Definition}
\newtheorem{remark}[theorem]{Remark}
\newcounter{itemnumber}
\DeclareMathOperator{\supp}{supp}
 \DeclareMathOperator{\rk}{rk
\, } \DeclareMathOperator{\Pic}{Pic} 
\DeclareMathOperator{\proj}{Proj}
\DeclareMathOperator{\Ad}{Ad}
\def\pp{{\mathfrak p}}
\def\gg{{\mathfrak g}}
\def\hh{{\mathfrak h}}
\def\uu{{\mathfrak u}}
\begin{document}

\title[]{On the embeddings of universal torsors over del Pezzo surfaces in the cones over flag varieties.}

\author[V.~Zhgoon ]{Vladimir S. Zhgoon }
{\vspace {1ex}}

\begin{abstract}
Following Skorobogatov and Serganova we construct the
 embeddings of universal torsors over del Pezzo surfaces in the cones over flag varieties,
 considered as the closed orbits in the projectivization of quasiminiscule representations.
We give the approach that allows us to construct the
embeddings of universal torsors over del Pezzo surfaces of the degree one.
This approach uses the Mumford stability and slightly differs from the approach of the authors named above.
\end{abstract}

\maketitle

Let $G$  be a simply connected algebraic group over algebraically closed field $\Bbb K$
of characteristics zero,
$Z(G)$ is the center of $G$, $T$
is a maximal torus in $G$, and $B$ is the Borel subgroup containing this torus,
 $U$ is the unipotent radical of $B$. This define the root system $\Delta$ and its
  subsets of positive and negative roots $\Delta^+$ and
$\Delta^-$.

Let $\Delta$  be a root system of one of the following types: $A_4,D_5,E_6,E_7,E_8$.
Denote by $\alpha$  the root corresponding to the endpoint of the Dynkin diagram: for the diagram
$D_5$ we set $\alpha=\varepsilon_4-\varepsilon_5$,  for the systems of type  $E_*$ and a system
 $A_4$ возьмем $\alpha=\varepsilon_1-\varepsilon_2$ (here $\varepsilon_i$
is the standard basis in the vector space  generated by the weights).
The root system corresponding to the Dynkin diagram with deleted end vertex we denote by
$\Delta^{'}$. Let us denote by  $\beta$ the root corresponding to the adjacent vertex to $\alpha$  (cf. fig.1). %, а  остальные корни ---через$\gamma_i$.
 The fundamental weight dual to $\alpha$ we denote by $\pi_\alpha$. By $\pi^{'}_\beta$
 we denote the fundamental weight, from the set of fundamental weights of the root system $\Delta^{'}$, dual to $\beta$ (we have to note that $(\pi^{'}_\beta;\alpha)\neq 0$).
 The systems of simple roots for the root systems $\Delta$ and $\Delta^{'}$ we denote by $\Pi$ and  $\Pi^{'}$
correspondingly.

\begin{center}
  \epsfig{file=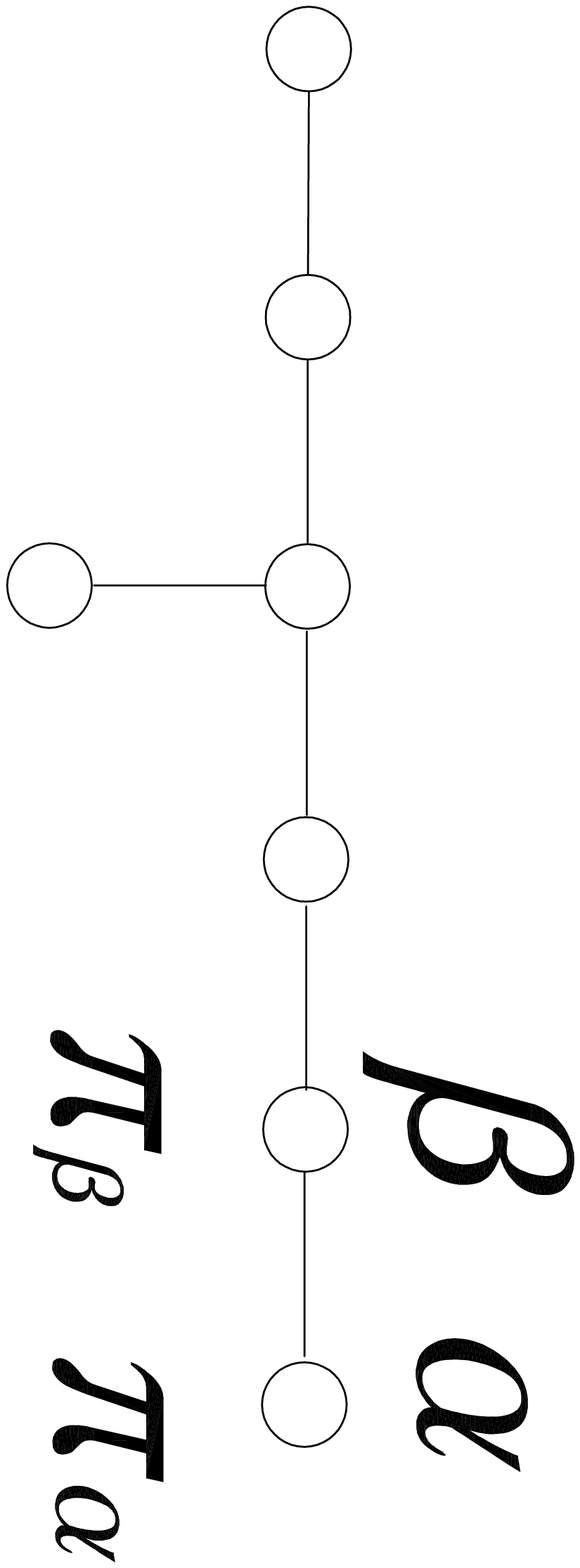,height=3cm,angle=90,clip=}

fig.1
\end{center}
{\vspace {1ex}}

Let us notice that the representation $V(\pi_\alpha)$  with the highest weight
$\pi_\alpha$
is the miniscule representation for all systems but $E_8$,  i.e. the Weyl group is
acting transitively on the weights of representation. In the case of  $E_8$
we the representation in consideration is adjoint and  the Weyl group is
acting transitively on the weights of representation distinct from zero, and a zero weight has the multiplicity $8$.

Let $P$ be a parabolic subgroup that stabilizes the point
$\langle v_{\pi_\alpha}\rangle\in \Bbb P(V(\pi_\alpha))$, where $v_{\pi_\alpha}$
 is the highest weight vector. Let
$L$ be a Levi subgroup of $P$. The semisimple part of $L$ we denote by $G^{'}$,
it has a root system
$\Delta^{'}\subset \Delta$.  The irreducible representation of
$G^{'}$ with the highest weight $\omega$ we denote by $V^{'}(\omega)$.
 The stabilizer in  $G^{'}$ of the point  $\langle v_{\pi_\beta^{'}}
 \rangle\in \Bbb P(V^{'}(\pi_\beta^{'}))$ is the parabolic subgroup $P^{'}$ with the
  Levi subgroup $L^{'}$, that has the system of simple roots  $\Pi^{''}:=\Pi\setminus \{\alpha,\beta\}$.
The semisimple part of $L^{'}$  is denoted by $G^{''}$, and its
 root system is denoted by   $\Delta^{''}$.

As it is well known from \cite{Manin},  to a del Pezzo surface one can put in the correspondence
the root system  $\Delta$  of one of the types mentioned above. Thus by   $X_{\Delta}$ we denote an arbitrary
del Pezzo surface those root system has type $\Delta$.

Let us remind the reader the definition of the universal torsor. We assume that the
 action of some torus $T_0$ on the normal variety
$\mathcal T$ is scheme theoretically free and there exists a geometric quotient $\mathcal
T\longrightarrow X$. Let $\mathcal O(\mathcal T)^{*}$  be the set of regular invertible functions on $\mathcal T$.
Then we have the following exact sequence from the work of J-L.Colliot-Th\'el\`ene et J-J.Sansuc
that we denote by {(CTS)}: $$1 \longrightarrow \mathcal O( X)^*/\Bbb
K^{*}\longrightarrow (\mathcal O(\mathcal T)^*/\Bbb K^{*})
\longrightarrow \Xi (T_0) \longrightarrow \Pic (X) \longrightarrow
\Pic(\mathcal T)\longrightarrow0,$$ where $\Xi (T_0) $ --- is the character lattice of  $T_0$.
We note that this sequence is functorial on $\mathcal T$.

\begin{definition} The torsor  $\mathcal T$ is called  {\it universal},
if the map  $\Xi (T_0) \longrightarrow \Pic (X)$ is an isomorphism.
\end{definition}

For a convenience of the reader we recall the definition of the set of semistable and stable points, introduced
by Mumford.
 \begin{definition} Let $X$ be an algebraic variety with the action of a reductive group  $H$, and $L$
 is an invertible ample
$H$-linearized sheaf on $X$.
\begin{itemize}
 \item[(i)]
 The set of semistable points is equal to  $$X_L^{ss} =\{ x\in
X :\exists n >0, \exists  \sigma \in \Gamma(X,L^{\otimes n})^H,
\  \sigma(x)\not = 0\}$$
 \item[(ii)]  The set of stable points is equal to   $$ X_L^s=\{ x \in
X_L^{ss} : \  \text{orbit} \  Hx\ \text {closed in} \  X_L^{ss}  \text{and the stabilizer} \ H_x \ \text{is finite}\}$$
\end{itemize}
\end{definition}

In our case $H=T$,   $X=G/P$,  and for the sheaf  $L$
we take $i^*\mathcal O(1)$, (where $i:G/P\subset \Bbb
P(V(\pi_\alpha))$), by $(G/P)^{sf}$ we denote the set of points  $x\in (G/P)^s$,
for which the stabilizer is equal to $T_x=Z(G)$.

 Consider the torus $T\times \Bbb K^{\times}$. Let us define the action of
 this torus on the space  $V(\pi_\alpha)$ in such way that the component
 $\Bbb K^{\times}$ acts on $V(\pi_\alpha)$
  by homotety, and the second component as the subgroup in  $G$.
 Since the representation  $V(\pi_\alpha)$ is irreducible,
  the kernel of the action of $T\times \Bbb K^{\times}$ on   $V(\pi_\alpha)$
  is isomorphic to $Z(G)$.   By $\widehat{T}$
we denote the quotient of $T\times \Bbb K^{\times}$ by this kernel.

Our aim is to prove the theorem in general case that was proved by Skorobogatov and Serganova \cite{skor}
in the case of root system $\Delta$ distinct from $E_8$.

\begin{theorem} There exists a locally  closed    $\widehat{ T}$-equivariant embedding
of the universal torsor  $\widehat{\mathcal T}$ over
del Pezzo surface $X_{\Delta}$ (in the case  $E_8$
we consider a sufficiently general surface  $X_{\Delta}$) in the affine cone in  $V(\pi_\alpha)$
over the open subset  of points  $(G/P)^{sf}$ of the flag variety   $ G/P\subset \Bbb
P(V(\pi_\alpha))$, that are stable with respect of the action of maximal torus
 $T$ and having a stabilizer $Z(G)$. Consider the intersection of
 $ \widehat{\mathcal T}$ with the $\widehat{ T}$-invariant hyperplane
 $v_{\omega}=0$ (where $\omega$
is the nonzero weight of the representation $V(\pi_\alpha)$).  Its image under the quotient by the action
 of torus $\widehat{ T}$ is the $(-1)$-curve lying on the surface
$X_{\Delta}$. All $(-1)$-curves on  $X_{\Delta}$  can be obtained in such way.
\end{theorem}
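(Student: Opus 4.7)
The plan is to construct the embedding through the Cox ring of $X_\Delta$, using Mumford stability to locate the image inside the cone. The first step is to fix an identification of lattices $\Xi(\widehat{T}) \cong \Pic(X_\Delta)$: both sides have rank $\mathrm{rank}(\Delta)+1$, and the identification is arranged so that the non-zero weights of $V(\pi_\alpha)$ correspond bijectively to the classes of $(-1)$-curves on $X_\Delta$ (the two sets being Weyl orbits of matching types). For $\Delta\neq E_8$ this is essentially forced by the orbit structure. For $E_8$ the adjoint representation $V(\pi_\alpha)$ has zero weight of multiplicity $8$, which must be matched with further Picard classes, and this matching is exactly where the ``sufficiently general'' hypothesis on $X_\Delta$ enters.

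With the identification fixed, for each non-zero weight $\omega$ pick a section $s_\omega$ of the line bundle on $X_\Delta$ corresponding to the $(-1)$-curve attached to $\omega$. By the defining property of the universal torsor (the arrow $\Xi(\widehat{T})\to\Pic(X_\Delta)$ in the (CTS) sequence being an isomorphism), each $s_\omega$ pulls back to a regular function on $\widehat{\mathcal{T}}$ of $\widehat{T}$-weight $\omega$. Fix a weight basis $\{v_\omega\}$ of $V(\pi_\alpha)$ and define $\Phi\colon \widehat{\mathcal{T}}\to V(\pi_\alpha)$ by $\Phi(x)=\sum_\omega s_\omega(x)\,v_\omega$. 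Equivariance under $\widehat{T}$ is then built in. To see that the image lies in the affine cone over $G/P\subset \Bbb P(V(\pi_\alpha))$, one verifies the Pl\"ucker-type quadratic relations cutting out $G/P$ at the level of the $s_\omega$; these translate into numerical identities on $(-1)$-curve intersections that hold in the Cox ring of a del Pezzo. Injectivity and the locally-closed embedding property then follow because the $\{s_\omega\}$ separate points on $\widehat{\mathcal{T}}$.

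The main obstacle is verifying that the image lands in the open stable locus $(G/P)^{sf}$, i.e.\ that the relevant points are $T$-stable with stabilizer exactly $Z(G)$. By the Hilbert--Mumford numerical criterion this reduces to showing that, for $x$ in a nonempty open subset of $X_\Delta$, the convex hull of those weights $\omega$ with $s_\omega(x)\neq 0$ contains the origin in its interior and spans $\Xi(T)\otimes\Bbb Q$. For $\Delta\neq E_8$ a generic point of $X_\Delta$ lies off every $(-1)$-curve, so every $s_\omega(x)$ is non-zero and the Weyl orbit of weights automatically satisfies the condition; this is essentially the Skorobogatov--Serganova situation. For $E_8$ the non-zero weights are the roots and the convex-hull condition is genuinely substantive; the ``sufficiently general'' hypothesis guarantees that at a generic $x$ no $s_\omega$ vanishes, so all roots contribute and the Hilbert--Mumford inequalities are met. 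Once the embedding is in place, the final clause is immediate: intersecting $\widehat{\mathcal{T}}$ with $\{v_\omega=0\}$ gives the zero locus of $s_\omega$, which descends under the $\widehat{T}$-quotient to the $(-1)$-curve $E_\omega$; running $\omega$ over all non-zero weights of $V(\pi_\alpha)$ exhausts the set of $(-1)$-curves, by the Weyl-orbit identification set up at the start.
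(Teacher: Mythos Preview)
Your approach is fundamentally different from the paper's and, as written, has a genuine gap at its core. The paper never attempts to write down the map $\Phi$ directly; instead it builds $\mathcal T$ inductively on $\mathrm{rank}(\Delta)$: one proves that the quotient of $(G/P)^s$ by a specific one-parameter subgroup $\lambda$ is the blow-up of $\Bbb P(V_1)$ along $G'/P'$ (a weighted blow-up for $E_8$), and then one lifts the already-constructed torsor $\mathcal T'\subset G'/P'$ through this blow-up and the $\lambda$-quotient. The point is that the image automatically sits in $G/P$ by construction, so the Pl\"ucker relations never have to be checked by hand.

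In your proposal, the entire content of the theorem is hidden in the sentence ``one verifies the Pl\"ucker-type quadratic relations \ldots\ these translate into numerical identities on $(-1)$-curve intersections that hold in the Cox ring.'' This is not right: intersection numbers tell you only which monomials can appear in a relation of given weight, not the coefficients. Each section $s_\omega$ is determined only up to a scalar, and the equations of $G/P\subset\Bbb P(V(\pi_\alpha))$ carry specific structure constants (coming from the $\mathfrak g$-action, e.g.\ the $e_{-\gamma}e_{-\delta}v_{\pi_\alpha}$ in the paper's Remark on equations). Showing that there exists a simultaneous choice of normalizations of all $s_\omega$ making every such relation hold is exactly the problem; it cannot be read off from numerical data and it is precisely what the inductive blow-up construction circumvents. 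For $E_8$ there is a further issue you gloss over: the zero weight has multiplicity $8$ and corresponds to no $(-1)$-curve, so there is no canonical section to place in that component; the paper handles this by working inside the divisor $D_{h_{\pi_\alpha}}$ and computing a weighted blow-up, which has no analogue in your outline. Finally, your stability argument covers only generic $x\in X_\Delta$, whereas the embedding must land in $(G/P)^{sf}$ over every point, including those on $(-1)$-curves where some $s_\omega$ vanish.
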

\begin{remark} The connection of universal torsors over del Pezzo surfaces  and flag varieties
embedded  in the projectivization of miniscule representation was first observed by V.V.Batyrev (see also
\cite{bat}).  The total coordinate rings of del Pezzo surfaces were calculated by V.V.Batyrev
and O.N.Popov in \cite{bat}.
For the detailed history of this question we refer the reader to  \cite{skor}.
\end{remark}

Let us give a sketch of the proof of the main theorem.
We note that the scheme of the proof was taken from
\cite{skor},  but most steps will be modified, and we shall give some proofs different from \cite{skor}.

First we prove that the Mumford quotient of the flag variety  $G/P$ by a one parameter subgroup
$\lambda:\Bbb K^{\times}\longrightarrow T$ is isomorphic to the blow up of $\Bbb
P(V(\pi_\beta^{'}))$ in the flag variety $G^{'}/P^{'}$, that we denote by
$Bl(\Bbb P(V(\pi_\beta^{'})),G^{'}/P^{'})$.  On the induction step we assume that for the
$T^{'}$-torsor $\mathcal T^{\ '}$ over
$X_{\Delta^{'}}$ we costructed its embedding into $G^{'}/P^{'}$ (it is obtained be the embedding of the universal torsor
 $\widehat{\mathcal T}^{\
'}$ in the cone over $G^{'}/P^{'}$ and a further projection on
$G^{'}/P^{'}$). Let us fix a weight basis in
$V^{'}(\pi_\beta^{'})$. Then the point $s\in \Bbb
P(V^{'}(\pi_\beta^{'}))$ whose all coordinates are nonzero defined the automorphism of  $\Bbb
P(V^{'}(\pi_\beta^{'}))$,  multiplying the coordinates of the point from $\Bbb
P(V^{'}(\pi_\beta^{'}))$ by the coordinates of the point  $s$.

Let    $e_{\Delta}$  be a point on the surface $X_{\Delta^{'}}$,
 and    $\widetilde{\sigma}:X_{\Delta} \longrightarrow X_{\Delta^{'}}$ is a blow up of this point.
Consider the point $s$ of the torsor $\mathcal T^{\ '}$, lying in the fiber over the point
 $e_{\Delta}$. Let $s^{'}\in G^{'}/P^{'}$ be a sufficiently general point.
 By applying the automorphism defined by $s^{'}s^{-1}$ that acts by multiplication of the weight
 vectors of $V^{'}(\pi_\beta^{'})$  we get that the image of the torsor $s^{'}s^{-1}\mathcal T^{\ '}$ intersect $G^{'}/P^{'}$ by
one $T^{'}$-orbit $Ts^{'}$, lying over the point  $e_{\Delta}$. Then we take a proper transform $\widetilde{\mathcal T}$ of the torsor $s^{'}s^{-1}\mathcal T^{\ '}$
under the blow up $\Bbb P(V(\pi_\beta^{'}))$ in $G^{'}/P^{'}$, and then we take its preimage under the quotient
by $\lambda$. Let us denote the obtained torsor by $\mathcal T$. We will show that the affine cone over
 $\mathcal T$ will be desired torsor.

In the case of group  $G$ of type $E_8$  the quotient  $\lambda \backslash\!\! \backslash (G/P)^{ss}$ is not
isomorphic to a blow up.
But here we can consider the quotient $\lambda \backslash\!\! \backslash (G/P)^{ss}\cap D_\alpha$ (for some $T$-invariant divisor $D_\alpha$
defined below), with the projection  $p_0$ on $\Bbb P(V_1)$. Then there exists  a $T^{'}$-invariant neighbourhood of the
point $s^{'}\in G^{'}/P^{'}\subset \Bbb P(V_1)$, such that its preimage is isomorphic to a weighted blow up in the subvariety
 $G^{'}/P^{'}$.
The local calculation of the proposition  \ref{blowupE_8} allows us to finish the proof similar to the case
 of the del Pezzo surfaces of the degree  $>1$.

\vspace{1ex} For the convenience of the reader we supply the commutative diagram illustrating the scheme of the proof.

$$
\xymatrix{
            &\mathcal T\ar@{^{(}->}[rr]\ar[d]_{\lambda \backslash\!\! \backslash}          &      &\ar[d]_{\lambda \backslash\!\! \backslash} (G/P)^{sf}  \\
            &\ar[ld]_{T^{ '} \backslash\!\! \backslash} \ar@{^{(}->}[rr]\ar[d]_{\sigma} \widetilde{\mathcal T}  &                                 &\ar[d]_{\sigma} Bl(\Bbb P(V(\pi_\beta^{'}));G^{'}/P^{'})\\
X_{\Delta}\ar[d]_{\widetilde{\sigma}}           &\ar[ld]_{T^{ '} \backslash\!\! \backslash} \ar@{^{(}->}[r] s^{'}s^{-1}\mathcal T^{\ '}   & s^{'}s^{-1}G^{'}/P^{'}\ar@{^{(}->}[r]   &  \Bbb P(V^{'}(\pi_\beta^{'})) \\
X_{\Delta^{'}}                 & \mathcal T^{\ '}   \ar@{^{(}->}[r]           & G^{'}/P^{'}\ar@{^{(}->}[ru]            & \\
}
$$

\vspace{3ex}

{\bf Let us introduce an additional notation:}

Let $W=N_G(T)/T$ be the Weyl group of $G$, $C$ is the dominant Weyl chamber for the root system $\Delta$.
 We denote by $s_\gamma\in W$ the reflection corresponding to the root $\gamma$,
Let $w\in W$, by $n_w$ we denote its representative in $N_G(T)$.
Let $H$ be a semisimple subgroup in $G$, normalized by torus $T$. We denote by $W_H$ the Weyl group of $H$,
we can assume that $W_H\subset W$.
For the Lie algebra  $\gg$ let us fix a standard basis that consists of  $e_\gamma$, where $\gamma \in \Delta$,
and $h_\vartheta$, where $\vartheta \in \Pi$.

Let $V$ be a $T$-module. By $\supp(V)$ we denote the set of weights  of the  $T$-module $V$.

For the miniscule representation  $V(\pi_\alpha)$ let us fix a basis that consists of the weight vectors $v(\omega)$,
 where $\omega \in \supp(V(\pi_\alpha))$. By  $V^*(-\pi_\alpha)$ we denote the dual module to $V(\pi_\alpha)$
  with the lowest weight $-\pi_\alpha$,  by  $\langle \cdot, \cdot\rangle$  we denote the canonical pairing,
 and by   $\{v^*(-\omega)\}$, its basis dual to $\{v(\omega)\}$.

For the birational morphism  $p:X\rightarrow Y$ and a subvariety $Z\subset Y$ we denote by $p_*^{-1}(Z)$
the proper transform of $Z$ in $X$.

\vspace{1ex}

The author is grateful to his scientific advisor I.V.Arzhantsev  for the constant attention to his work.
He wants to thank A.N.Skorobogatov and V.V.Serganova for paying his attention to the mistake in theorem \ref{E_8factor}.
I am grateful to Y.G.Prokhorov for useful discussions on the extremal contractions.
I am grateful to E.B.Vinberg, V.L.Popov and T.E.Panov for the attention to this work.

\section{The geometry of embeddings of the flag varieties in the projectivizations of miniscule
representations}

For describing  geometry of the flag varieties we shall need the following lemma
from the work of
 I.N.Berstein, I.M.Gelfand, S.I.Gelfand \cite{BGG}, describing the structure of Schubert varieties in
 this projective embedding.

{\vspace {2ex}}

\begin{lemma}\label{BGG}\cite[2.12]{BGG}  Let $w \in W$ be an element of the Weyl group,
and $B^-wP/P$ is the corresponding Schubert cell.
 Consider the closed embedding $G/P \hookrightarrow \Bbb
P(V(\chi))$ (where $\chi$ --- is the dominant weight). Let $f\in
V({\chi})$ is the vector from the orbit of the highest weight vector. Then $\langle
f\rangle \in B^-wP/P$ iff  $w\chi \in
supp(f)$ and $f \in \mathfrak U(\mathfrak b^-)v_{w\chi}$, where
$\mathfrak U(\mathfrak b^-)$ is the universal enveloping
of the Lie algebra $\mathfrak b^-$, and
$v_{w\chi}=w v_{\chi}$ is the vector of weight  ${w\chi}$ (that has multiplicity one in the representation
 $V(\chi)$). \end{lemma}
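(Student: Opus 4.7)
My plan is to identify the Schubert cell $B^-wP/P$, in the projective embedding, with the $U^-$-orbit of the $T$-fixed point $\langle v_{w\chi}\rangle$, and then to separate different cells using the partial order on weights induced by $\Delta^+$.

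The first step is essentially formal. In $\mathbb{P}(V(\chi))$ the point $\langle n_wv_\chi\rangle=\langle v_{w\chi}\rangle$ is $T$-fixed, so the decomposition $B^-=TU^-$ gives $B^-wP/P=U^-\langle v_{w\chi}\rangle$. For the direction $\langle f\rangle\in B^-wP/P\Rightarrow f\in\mathfrak{U}(\mathfrak{b}^-)v_{w\chi}$ and $w\chi\in\supp(f)$, I would pick a representative $f=c\cdot u\cdot v_{w\chi}$ with $c\in\mathbb{K}^{*}$ and $u\in U^-$, write $u$ as a product of exponentials $\exp(t_\gamma e_{-\gamma})$ with $\gamma\in\Delta^+$, and expand. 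The result visibly lies in $\mathfrak{U}(\mathfrak{b}^-)v_{w\chi}$, and its weight decomposition has the form $c\,v_{w\chi}+\text{(terms of strictly lower weight)}$, so the coefficient of $v_{w\chi}$ in $f$ is the nonzero scalar $c$; in particular $w\chi\in\supp(f)$.

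For the converse I would apply the Bruhat decomposition $G/P=\bigsqcup_{w'\in W/W_P}B^-w'P/P$ to place $\langle f\rangle$ in some cell $B^-w'P/P$, and then invoke the direction just proved to conclude $f\in\mathfrak{U}(\mathfrak{b}^-)v_{w'\chi}$ and $w'\chi\in\supp(f)$. Combined with the hypotheses, both $w\chi-w'\chi$ and $w'\chi-w\chi$ then lie in $\mathbb{Z}_{\geq 0}\Pi$, since every weight appearing in $\mathfrak{U}(\mathfrak{b}^-)v_\mu$ is $\mu$ minus a non-negative combination of simple roots. This forces $w\chi=w'\chi$, and because the stabilizer of the dominant weight $\chi$ in $W$ coincides with $W_P$ (as $P$ is precisely the stabilizer of $\langle v_\chi\rangle$), we obtain $w=w'$ in $W/W_P$, hence $\langle f\rangle\in B^-wP/P$.

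I do not expect any serious obstacle; the two external inputs are the Bruhat decomposition and the identification $\mathrm{Stab}_W(\chi)=W_P$, both standard for the parabolic attached to $\chi$. The one point that deserves a quick verification is that $v_{w\chi}$ depends only on the coset $wW_P$ up to a scalar, so that the entire argument is well defined on $G/P$ rather than on $G/B$; this is exactly the multiplicity-one statement recorded in the lemma.
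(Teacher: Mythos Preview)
Your argument is correct and complete. Both directions are handled cleanly: the forward implication via the exponential expansion of $u\in U^-$ acting on $v_{w\chi}$, and the converse via the Bruhat decomposition combined with the antisymmetry of the partial order on weights.

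However, there is nothing to compare against: the paper does not prove this lemma. It is quoted verbatim as \cite[2.12]{BGG} from the Bernstein--Gelfand--Gelfand paper and used as a black box throughout (in Proposition~\ref{intersection}, Lemma~\ref{component_V_1}, Proposition~\ref{bl}, and the Appendix). So your proposal is not an alternative to the paper's proof but rather a self-contained justification of a result the author chose to import. If you want to match the paper exactly, simply cite \cite[2.12]{BGG}; if you want the exposition to be self-contained, your write-up does the job.
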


Let us study the $G^{'}$-submodules in the representation
$V(\pi_\alpha)$.

Consider the restriction of the representation
$V(\pi_\alpha)$ on the subgroup
 $G^{'}\subset P$. Let us notice that the pairing $(\pi_\alpha; \cdot )$
  defines the grading on $V(\pi_\alpha)$. Indeed, the vector $v=\sum v_{\chi}$
  belongs to the component  $V_i$
iff $( \pi_\alpha; \chi )=( \pi_\alpha; \pi_\alpha
)-i$ for any $\chi$, such that $v_\chi\neq 0$.

There is a decomposition
$$V=\langle v(\pi_\alpha) \rangle \oplus V_1 \oplus V_2 \oplus V_{\geqslant 3}.$$

The grading in consideration is $G^{'}$-invariant. To prove this, it is sufficient to check
that the action of the element $e_\tau \in
\gg^{'}$ of the standard basis  $\gg^{'}$, where $\tau \in
\Delta^{'}$, maps a weight vector $v_{\chi_0}$ to the weight vector $ v_\chi$, where $( \pi_\alpha; {\chi_0})=( \pi_\alpha; \chi
)$. Let us notice that $e_\tau v_{\chi_0}= v_{\chi_0+\tau}$, and also $(
\pi_\alpha; \tau )=0$, since $\gg^{'}$  stabilizes the line
$\langle v_{\pi_\alpha} \rangle$. This gives the required assertion.

Since $\alpha$  is the only simple root with nontrivial pairing with
 $\pi_\alpha$ it is easy to see that

$$ \supp(V_1) = ({\pi_\alpha}-\alpha-\sum_{\gamma \in \Pi^{'}} \Bbb Q_+\gamma)\cap \supp(V(\pi_\alpha)),$$
$$ \supp(V_2) = ({\pi_\alpha}-2\alpha+\sum_{\gamma \in \Delta^{'}} \Bbb Q \gamma)\cap \supp(V(\pi_\alpha)).$$

 The weight  ${\pi_\alpha}-\alpha$ is dominant for the representation
$G^{'}:V_1$ (indeed $({\pi_\alpha}-\alpha;\beta)=1$ and
$({\pi_\alpha}-\alpha;\gamma)=0$ for $\gamma \in \Pi^{''}$).
Thus $\supp V_1\subset \supp V\subset
s_\alpha(\pi_\alpha-\sum_{\gamma \in \Pi}\Bbb Q_+\gamma)$,  that
means that for  $G^{'}$-module $V_1$ we have $\supp V_1\subset
{\pi_\alpha}-\alpha-\sum_{\gamma \in \Pi^{'}} \Bbb Q_+\gamma$.

\begin{remark}\label{V_>3} The component $V_{\geqslant 3}$ is nonzero
 only for the groups of type $E_7$ and $E_8$. In the case of $E_7$ there is an
equality $V_{\geqslant 3}=V_{3}= \langle v_{-\pi_\alpha}
\rangle$. The equality $V_{\geqslant 3}= 0$ for the remaining groups follows from the fact that
 $\langle \pi_\alpha; \pi_\alpha-w_0\pi_\alpha
\rangle<3$, where $w_0\pi_\alpha$  is the lowest weight of representation
$V(\pi_\alpha)$.
\end{remark}

\begin{lemma}\label{V_1=p^u} The representation $G^{'}:V_1$ is irreducible miniscule representation.
 Let $G^{'}:\pp^-_u$ be an adjoint representation of the semisimple part of the Levi subgroup
 $L$ on the unipotent radical  $\pp^-_u$  of the parabolic subalgebra $\pp^-$. Then we have
\begin{itemize}
 \item[$\bullet$] if $\Delta$ is distinct from type $E_8$,  then we have isomophism
 of the  $G^{'}$-modules $V_1\cong V(\pi_\beta^{'}) \cong \pp^-_u$.

 \item[$\bullet$] in the case when $\Delta$ is the root system of type
 $E_8$, we have isomorphisms of the
 $G^{'}$-modules $V_1\cong V(\pi_\beta^{'})$ and $V_1\oplus \langle v_0 \rangle\cong \pp^-_u$,
  where $v_0=e_{-\pi_\alpha}v_{\pi_\alpha}$ is the vector of weight zero (In the case of $E_8$
 the weight $\pi_\alpha=\varepsilon_1-\varepsilon_9$ is a root).
\end{itemize}
\end{lemma}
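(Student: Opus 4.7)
The plan is to identify the $T^{'}$-weight $\pi_\beta^{'}$ as the $\Delta^{'}$-maximal weight of $V_1$, to deduce irreducibility from the multiplicity-one structure of $V(\pi_\alpha)$, and finally to match $V_1$ with $\pp^-_u$ through the $G^{'}$-equivariant evaluation map $X\mapsto X\cdot v_{\pi_\alpha}$.

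First I show that $v_{\pi_\alpha-\alpha}=e_{-\alpha}v_{\pi_\alpha}\in V_1$ is a $\gg^{'}$-highest weight vector whose $T^{'}$-weight equals $\pi_\beta^{'}$. The vector is non-zero because $(\pi_\alpha,\alpha^\vee)=1$, and the $T^{'}$-restriction of $\pi_\alpha-\alpha$ is computed by the pairings $(\pi_\alpha-\alpha,\beta^\vee)=-(\alpha,\beta^\vee)=1$ and $(\pi_\alpha-\alpha,\gamma^\vee)=0$ for $\gamma\in\Pi^{''}$, using that $\alpha$ is simply-laced adjacent to $\beta$ and non-adjacent to every other element of $\Pi^{'}$. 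Combined with the containment $\supp V_1\subset\pi_\alpha-\alpha-\sum_{\gamma\in\Pi^{'}}\Bbb Q_+\gamma$ established before the lemma, this forces $\pi_\alpha-\alpha$ to be the unique $\Delta^{'}$-dominant weight of the $G^{'}$-module $V_1$; it is indeed a $\gg^{'}$-highest weight since for every $\gamma\in\Pi^{'}$ the shifted weight $\pi_\alpha-\alpha+\gamma$ lies outside the above cone.

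For irreducibility, I would use that every non-zero weight of $V(\pi_\alpha)$ has multiplicity one (in the $E_8$ case the only weight of higher multiplicity is zero, which is excluded from $V_1$ because $(\pi_\alpha,0)\neq(\pi_\alpha,\pi_\alpha)-1$). Decomposing $V_1=\bigoplus_i V^{'}(\mu_i)$ into $G^{'}$-irreducibles, each highest weight $\mu_i$ is $\Delta^{'}$-dominant and therefore equals $\pi_\beta^{'}$ by the previous step, and multiplicity one forces a single summand, giving $V_1\cong V^{'}(\pi_\beta^{'})$. Miniscularity of $V_1$ follows from the fact that $W^{'}$ fixes $\pi_\alpha$ (since $(\pi_\alpha,\gamma^\vee)=0$ for every $\gamma\in\Pi^{'}$) and therefore preserves the grading level $(\pi_\alpha,\cdot)$, so $W^{'}$ acts on $\supp V_1$; bringing any weight to dominance by $W^{'}$ yields the unique dominant element $\pi_\beta^{'}$, hence $\supp V_1=W^{'}\cdot\pi_\beta^{'}$ is a single Weyl orbit.

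Finally, to compare $V_1$ with $\pp^-_u$, consider the $G^{'}$-equivariant map $\varphi\colon\pp^-_u\to V(\pi_\alpha)$, $X\mapsto X\cdot v_{\pi_\alpha}$. The image of $e_{-\gamma}$ is a weight vector of $T$-weight $\pi_\alpha-\gamma$, and hence lies in $V_{n_\alpha(\gamma)}$, where $n_\alpha(\gamma)$ denotes the coefficient of $\alpha$ in $\gamma$. For the types $A_4,D_5,E_6,E_7$ every positive root satisfies $n_\alpha\in\{0,1\}$, so $\varphi$ sends $\pp^-_u$ into $V_1$, and matching the $T$-weights of the two sides (each weight $\pi_\alpha-\gamma$ with $n_\alpha(\gamma)=1$ occurs once) shows it is a $G^{'}$-isomorphism. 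The main obstacle lies in the $E_8$ case: the highest root $\theta=\pi_\alpha$ is the unique positive root with $n_\alpha=2$, and contributes an extra one-dimensional summand $\Bbb Ke_{-\theta}$ in $\pp^-_u$. I verify this summand is a trivial $G^{'}$-module by two weight counts: $(\theta,\gamma^\vee)=(\pi_\alpha,\gamma^\vee)=0$ for every $\gamma\in\Pi^{'}$, so the $T^{'}$-weight of $e_{-\theta}$ vanishes, and no root of the form $\theta-\gamma$ with $\gamma\in\Delta^{'}$ exists (since $\theta$ is the unique positive root with $n_\alpha=2$), so $\gg^{'}$ annihilates $e_{-\theta}$. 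The image $v_0=\varphi(e_{-\theta})=[e_{-\theta},e_\theta]$ is a non-zero scalar multiple of $h_\theta$ which is $G^{'}$-invariant by the same coroot identity; this yields the required isomorphism $\pp^-_u\cong V_1\oplus\langle v_0\rangle$ of $G^{'}$-modules.
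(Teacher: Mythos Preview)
Your argument for the identification $V_1\cong\pp^-_u$ via the evaluation map $X\mapsto X\cdot v_{\pi_\alpha}$ is correct and in fact more transparent than the paper's, which simply asserts the dimension equality $\dim V_1=\dim\pp^-_u$ and invokes a nontrivial projection; your root-by-root analysis of the $\alpha$-coefficient $n_\alpha(\gamma)$ makes both the isomorphism and the extra $E_8$ summand explicit.

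The gap is in your irreducibility argument. You claim that the containment $\supp V_1\subset\pi_\alpha-\alpha-\sum_{\gamma\in\Pi'}\Bbb Q_+\gamma$ ``forces $\pi_\alpha-\alpha$ to be the unique $\Delta'$-dominant weight of $V_1$'', and then use this uniqueness to conclude both irreducibility and minuscularity. But the containment only says $\pi_\alpha-\alpha$ is the \emph{maximal} weight in the root order; it does not exclude further dominant weights strictly below it (think of the zero weight in an adjoint representation). So the deduction ``each highest weight $\mu_i$ is $\Delta'$-dominant and therefore equals $\pi_\beta'$'' is unjustified, and the minuscularity step ``bringing any weight to dominance yields the unique dominant element'' is circular.

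The paper closes this gap differently: assuming two distinct highest weights $\omega_1,\omega_2$, it writes $\omega_1-\omega_2=\sum_{\gamma\in\Pi'}c_\gamma\gamma$, splits the sum into positive and negative parts over disjoint index sets $I_1,I_2\subset\Pi'$, and sets $\omega:=\omega_1-\sum_{I_1}c_\gamma\gamma=\omega_2+\sum_{I_2}c_\gamma\gamma$. One then checks $(\omega,\gamma_0)\geq 0$ for every simple $\gamma_0\in\Pi'$ by using whichever expression avoids $\gamma_0$, so $\omega$ is dominant and lies below both $\omega_1$ and $\omega_2$; hence it occurs in both irreducible summands, contradicting multiplicity one. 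An equivalent quick patch for your write-up: any dominant $\mu\in\supp V_1$ satisfies $\mu\leq\pi_\beta'$ and is therefore already a weight of the summand $V'(\pi_\beta')$; if $\mu\neq\pi_\beta'$ were also the highest weight of a second summand it would appear twice, again contradicting multiplicity one. Either argument simultaneously yields minuscularity, since it shows $\supp V_1$ contains a single $\Delta'$-dominant weight.
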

\begin{proof} First let us prove that the representation $V_1$ is irreducible.
Let us recall that all weights of  $V_1$ have multiplicity one. (The weights $V_1$ are those
weights $\tau$ of the representation $V(\pi_{\alpha})$ that belong to the hyperplane
 $\langle \pi_\alpha; \pi_\alpha - \tau \rangle=1$.
But the nonzero weights of $V(\pi_{\alpha})$ have multiplicity one.) If $V_1$ is reducible, then there are
 two subrepresentations with the different highest weights $\omega_1$ and
$\omega_2$. Since the representation  $V(\pi_\alpha)$ is $G$-irreducible,
the difference of these weights is the integral linear combination of simple roots from $\Delta$.
In other words there exists a weight $\omega$ such that

$$\omega=\omega_1-n_\alpha\alpha-\sum \limits_{\gamma\in I_1\subset \Pi^{'}} n_\gamma\gamma=
\omega_2-m_\alpha\alpha-\sum \limits_{\gamma\in I_2\subset\Pi^{'}}
m_\gamma\gamma$$ where all the coefficients
$n_\alpha,n_\gamma,m_\alpha,m_\gamma$ are nonnegative and the sets
$I_1$ and $I_2$ do not intersect.

Let us notice that  $( \pi_\alpha; \omega_1 )=( \pi_\alpha; \omega_2 )=(
\pi_\alpha; \pi_\alpha)-1$, taking into account that $( \pi_\alpha;
\gamma )=0$ for $\gamma\in \Pi^{'}$ we get:

$$( \omega;\pi_\alpha )=(\omega_1;\pi_\alpha )-n_\alpha(\alpha;\pi_\alpha )=
(\omega_2;\pi_\alpha )-m_\alpha(\alpha;\pi_\alpha ).$$

Thus $n_\alpha=m_\alpha$ and we can assume that
$n_\alpha=m_\alpha=0$.

Let us show that the weight $\omega$ is dominant with respect to $G^{'}$. Let
$\gamma_0\notin I_2\subset\Pi^{'}$, then

$$( \omega;\gamma)=(\omega_2;\gamma_0 )-\sum \limits_{\gamma\in I_2\subset\Pi^{'}}
m_\gamma(\gamma;\gamma_0 )\geqslant 0,$$ since $\omega_2$
is dominant and $\langle\gamma;\gamma_0 \rangle\leqslant0$ for
$\gamma\neq\gamma_0$. The similar equality holds for
$\gamma_0\notin I_1\subset\Pi^{'}$. That is required. Since
$\omega$ is dominant and can be obtained from $\omega_1$ and $\omega_2$
by subtracting simple roots, this weight belong to the weights of both irreducible modules,
that implies that its multiplicity is at least 2. We come to a contradiction since all weights of the representation $V_1$
have multiplicity one.

To show that $V_1\cong V(\pi_\beta^{'})$ let us calculate the pairing of
 $\pi_\alpha-\alpha$ with the simple roots in $\Delta^{'}$.
As we have already seen $({\pi_\alpha}-\alpha;\beta)=1$ and
$({\pi_\alpha}-\alpha;\gamma)=0$ for $\gamma \in \Pi^{''}$, that is needed.

We notice that $\pp^-_uv(\pi_\alpha)\subset V_{\geqslant 1}$
 and that $\pp^-_uv(\pi_\alpha)$ is not contained in $
V_{\geqslant 2}$. Indeed  $\pp^-_u$ contains the root $-\alpha$, hence
 $\pp^-_uv(\pi_\alpha)$ contains
$e_{-\alpha}v(\pi_\alpha)\in V_1$.

Comparing the dimensions we get $\dim V(\pi_\beta^{'})=\dim
V_1=\dim(\pp^-_u)$. Taking into account that there exists a nontrivial projection
of $G^{'}$-module $\pp^-_u$ to $V_1$, we obtain the isomorphism of these modules in the
first case.

In the second case when  $\Delta$  is of the type $E_8$, the proof is similar.
As before all weights of  $V_1$
have multiplicity  $1$ (since the only weight with multiplicity in  $V(\pi_\alpha)$
is the zero weight but it lie in  $V_2$). We also notice that
 $v_0=e_{-\pi_\alpha}v(\pi_\alpha)=h_{\pi_\alpha}$ is
 $G^{'}$-fixed vector. The irreducibility of
$V_1$ and the following equality on dimensions
 $\dim \pp^-_u=\dim V_1 +1$ terminates the proof.
\end{proof}

\begin{corollary}\label{wtp_u} The intervals connecting the weight $\pi_\alpha$ with the weights
$\pi_\alpha-\delta$, where $\delta \in \Delta_{\pp^-_u}$ (corr.
$\delta \in \Delta_{\pp^-_u}\setminus \pi_\alpha$ in the case of type $E_8$),
are the edges of weight polytop of the representation
$V(\pi_\alpha)$.

\end{corollary}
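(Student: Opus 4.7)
To prove the corollary, the plan is to exhibit, for each admissible $\delta$, an explicit linear functional on the weight space whose maximum over the weight polytope $P$ of $V(\pi_\alpha)$ is attained exactly on the segment $[\pi_\alpha,\pi_\alpha-\delta]$; this is equivalent to the segment being an edge of $P$. First I would verify that both endpoints are vertices of $P$. In the miniscule cases all weights form a single Weyl orbit and therefore have the common norm $|\pi_\alpha|$; in the $E_8$ case the nonzero weights are roots, again of equal length, while the zero weight lies in the interior of $P$. In particular $|\pi_\alpha-\delta|=|\pi_\alpha|$, which yields the basic identity $(\pi_\alpha,\delta)=\tfrac12|\delta|^2$.

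Next I would take $f(x)=(\pi_\alpha-\tfrac12\delta,\,x)$. The identity above gives $(f,\delta)=0$, so $f(\pi_\alpha)=f(\pi_\alpha-\delta)$. For any other weight $\tau\neq 0$, applying the analogous identity to $\mu=\pi_\alpha-\tau$ yields $(\pi_\alpha,\mu)=\tfrac12|\mu|^2$, whence
\[
f(\pi_\alpha)-f(\tau)=\tfrac12(\mu,\mu-\delta)=\tfrac12\bigl(\pi_\alpha-\tau,\;(\pi_\alpha-\delta)-\tau\bigr).
\]
I would then argue positivity by splitting into two cases. If $\tau$ has strictly deeper level than $\pi_\alpha-\delta$, i.e.\ $(\pi_\alpha,\tau)<(\pi_\alpha,\pi_\alpha-\delta)$, then expanding the right hand side displays it as a strictly positive level-difference plus the nonnegative Cauchy--Schwarz quantity $|\tau|^2-(\tau,\pi_\alpha-\delta)\geq 0$ (using $|\tau|=|\pi_\alpha-\delta|$). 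If $\tau\in\supp V_1$ with $\tau\neq\pi_\alpha-\delta$, the level terms cancel and positivity reduces to strict Cauchy--Schwarz between two distinct vectors of equal length. The $E_8$ zero weight is handled by direct substitution: $f(\pi_\alpha)-f(0)=|\pi_\alpha|^2-\tfrac14|\delta|^2>0$.

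The only structural input needed is that no weight lies strictly between levels $0$ and $1$, which is automatic because $\pi_\alpha$ is the unique weight of level $0$ and the grading takes nonnegative integer values. I expect the main point requiring care to be ensuring that the Cauchy--Schwarz inequality at level one is strict, which follows from the fact that two distinct vectors of equal norm cannot be positively proportional; this is a bookkeeping issue rather than a real obstacle.
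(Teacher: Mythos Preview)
Your argument is correct. The supporting functional $f(x)=(\pi_\alpha-\tfrac12\delta,x)$ does the job, and your case analysis is sound: the identity $(\pi_\alpha,\mu)=\tfrac12|\mu|^2$ for $\mu=\pi_\alpha-\tau$ (valid for nonzero $\tau$ because all nonzero weights share the norm $|\pi_\alpha|$) reduces everything to a level-difference term plus $|\tau|^2-(\tau,\pi_\alpha-\delta)$, and strict Cauchy--Schwarz at level one is exactly what you say it is. The $E_8$ zero-weight check is also right.

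The paper's route is different and shorter, because it leans on the structural content of the preceding lemma rather than on norms. Having just shown that $V_1$ is an irreducible \emph{minuscule} $G'$-module, the paper observes that the tangent cone of the weight polytope at the vertex $\pi_\alpha$ is precisely the cone over the convex hull of $\supp V_1$ (this is the level-one slice). Since $V_1$ is minuscule, every weight of $V_1$ is extremal in that slice, and hence every segment $[\pi_\alpha,\pi_\alpha-\delta]$ is an edge. So the paper's argument is essentially ``edges at $\pi_\alpha$ $\leftrightarrow$ vertices of the level-one polytope $\leftrightarrow$ all of $\supp V_1$ by minusculeness.'' Your approach trades this structural input for a direct and self-contained convex-geometry computation using only that the nonzero weights have a common length; it does not need to know that $V_1$ is minuscule, at the cost of a few more lines of algebra.
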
 \begin{proof}
 Let us notice that the cone with the vertex   $\pi_\alpha$, spanned by  the roots $\delta \in
 \Delta_{\pp^-_u}$ coincides with  the cone spanned by the edges of polytop with the end
 in the vertex $\pi_\alpha$.
 The proposition follows from the fact that the edges in consideration
 are the edges of the cone with the vertex $\pi_\alpha$ over the convex polytop
formed by the weights $\pi_\alpha-\delta$. One has to use the fact that
they the vertices of the weight polytop of the representation $V_1$,
 that are also the vertices of the  weight polytop of the representation $V(\pi_\alpha)$.

 We omit the proof for the case of  $E_8$ since it is similar.
 \end{proof}

In the proposition  \ref{V_1=p^u}  we obtained that the miniscule representation $V_1$
of the group $G^{'}$ includes in the  quasiminiscule representation $V(\pi_\alpha)$. The next lemma
 relates the flag varieties  $G/P$ and $G^{'}/P^{'}$, that lie in the projectivizations of these
 quasiminiscule representations.

\begin{proposition} \label{intersection} Let $V(\pi_\alpha)$  be a quasiminiscule representation,
and $G/P$ be a flag variety that embeds in $\Bbb P(V(\pi_\alpha))$ as the projectivization of the orbit of
the highest weight vector. Consider the decomposition of $V(\pi_\alpha)$ in the sum of irreducible $G^{'}$-modules:
$$V(\pi_\alpha)|_{G^{'}}=\langle v(\pi_\alpha)\rangle\oplus V_1\oplus \ldots.$$

Then the intersection $G/P\cap\Bbb P(V_1)$  is isomorphic to the flag variety
 $G^{'}/P^{'}$, embedded in  $\Bbb P(V_1)$ as a
$G^{'}$-orbit of the point $\langle e_{-\alpha}v(\pi_\alpha)\rangle$.
\end{proposition}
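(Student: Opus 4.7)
My plan is to establish a set-theoretic double inclusion $G^{'}/P^{'} = G/P \cap \mathbb{P}(V_1)$, with $G^{'}/P^{'}$ realized concretely as the $G^{'}$-orbit of the line $\langle e_{-\alpha} v(\pi_\alpha) \rangle$; both directions should follow by combining the BGG description of Schubert cells (Lemma~\ref{BGG}) with the structure of $V_1$ supplied by Lemma~\ref{V_1=p^u}. For the easy inclusion $G^{'}/P^{'} \subset G/P \cap \mathbb{P}(V_1)$ I would first observe that $e_{-\alpha} v(\pi_\alpha)$ spans the weight space of weight $s_\alpha \pi_\alpha = \pi_\alpha - \alpha$ in $V(\pi_\alpha)$, so $\langle e_{-\alpha} v(\pi_\alpha)\rangle$ coincides with the $n_{s_\alpha}$-translate of the highest weight line and lies in $G/P$, while the grading places it in $\mathbb{P}(V_1)$. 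Since $V_1$ is $G^{'}$-invariant and $G/P$ is $G$-invariant, the entire $G^{'}$-orbit sits in the intersection, and by Lemma~\ref{V_1=p^u} the weight $\pi_\alpha-\alpha$ is precisely the highest weight $\pi_\beta^{'}$ of $V_1\cong V^{'}(\pi_\beta^{'})$, so the stabilizer in $G^{'}$ is $P^{'}$ and the orbit is the required $G^{'}/P^{'}$.

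The reverse inclusion is the substantive step. Given $\langle f \rangle \in G/P \cap \mathbb{P}(V_1)$, Lemma~\ref{BGG} produces $w \in W$ with $w\pi_\alpha \in \supp(f)$ and $f \in \mathfrak{U}(\mathfrak{b}^-) v_{w\pi_\alpha}$. Since $\supp(f) \subset \supp(V_1)$ and the weights of the miniscule $G^{'}$-module $V_1$ form a single $W_{G^{'}}$-orbit through $\pi_\alpha - \alpha$, there exists $w^{'}\in W_{G^{'}}$ with $w\pi_\alpha = w^{'}(\pi_\alpha - \alpha)$, and consequently $v_{w\pi_\alpha}$ is a scalar multiple of $n_{w^{'}} e_{-\alpha} v(\pi_\alpha) \in V_1$. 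I would then split $\mathfrak{U}(\mathfrak{b}^-) = \mathfrak{U}(\pp^-_u)\cdot \mathfrak{U}(\mathfrak{b}^-_L)$ via PBW: the factor $\mathfrak{U}(\mathfrak{b}^-_L) v_{w\pi_\alpha}$ lies inside $V_1$, whereas the nontrivial (positive-degree) part of $\mathfrak{U}(\pp^-_u)$ strictly lowers the grading $(\pi_\alpha,\cdot)$ and thereby pushes $V_1$ into $V_{\geqslant 2}$. The assumption $f\in V_1$ forces the $\pp^-_u$-part to reduce to scalars and gives $f \in \mathfrak{U}(\mathfrak{b}^-_L) v_{w\pi_\alpha}$, which, modulo central characters of $L$ acting on weight vectors, equals $\mathfrak{U}(\mathfrak{b}^-_{G^{'}}) v_{w\pi_\alpha}$. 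A second application of Lemma~\ref{BGG}, now to the embedding $G^{'}/P^{'}\subset \mathbb{P}(V^{'}(\pi_\beta^{'}))$ and the weight $w^{'}\pi_\beta^{'}$, then yields $\langle f\rangle \in G^{'}/P^{'}$.

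The delicate point is this PBW step, where I must control that $\mathfrak{U}(\mathfrak{b}^-)v_{w\pi_\alpha}$ can meet $V_1$ nontrivially only through its $\mathfrak{U}(\mathfrak{b}^-_L)$ component; everything else is essentially bookkeeping on top of Lemmas~\ref{BGG} and~\ref{V_1=p^u}. For $\Delta = E_8$ no additional work should be needed: the unique weight of $V(\pi_\alpha)$ of multiplicity greater than one is the zero weight, which by Remark~\ref{V_>3} and Lemma~\ref{V_1=p^u} sits in $V_2$ rather than in $V_1$, so $\supp(V_1)$ is still a single $W_{G^{'}}$-orbit of extremal weights and every step of the above argument applies verbatim.
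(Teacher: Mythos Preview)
Your easy inclusion is correct. The gap is in the reverse inclusion, at what you call ``a second application of Lemma~\ref{BGG}''. That lemma is stated under the hypothesis that $f$ already lies in the orbit of the highest weight vector; it then characterises \emph{which Schubert cell} of the flag variety contains $\langle f\rangle$. It is not a membership criterion for the flag variety itself. You have shown $w'\pi_\beta^{'}\in\supp(f)$ and $f\in\mathfrak U(\mathfrak b^-_{G'})v_{w'\pi_\beta^{'}}$, but these two conditions do not imply $\langle f\rangle\in G'/P'$. Indeed they are nearly vacuous: whenever $w'\pi_\beta^{'}=\pi_\alpha-\alpha$ is the highest weight of $V_1$ (a case you cannot exclude), one has $\mathfrak U(\mathfrak b^-_{G'})v_{w'\pi_\beta^{'}}=V_1$, and the conditions are satisfied by every $f\in V_1$ with nonzero top component, whether or not $\langle f\rangle$ lies on $G'/P'$.

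What your PBW step discards is precisely the hypothesis $\langle f\rangle\in G/P$: it uses only the consequence $f\in\mathfrak U(\mathfrak b^-)v_{w\pi_\alpha}$, which is far weaker than membership in the highest-weight orbit. The paper keeps this information by passing to the big cell. After a $W_{G'}$-translation one may assume the component $v_{\pi_\alpha-\alpha}$ of $f$ is nonzero; then $s_\alpha\langle f\rangle$ lies in the open cell $P_u^-P/P$, which gives a \emph{group} parametrisation
\[
\langle f\rangle=\Big\langle\exp\Big(\sum_{\gamma\in\Delta_{\pp_u}} c_\gamma\, s_\alpha e_{-\gamma}\Big)\,v_{\pi_\alpha-\alpha}\Big\rangle.
\]
One now argues, using the extremality of the weights $\pi_\alpha-\gamma$ from Corollary~\ref{wtp_u} together with an $\mathfrak{sl}_2$-computation, that any $\gamma$ with $s_\alpha\gamma\notin\Delta'$ and $c_\gamma\neq0$ would force a nonzero weight component of $f$ outside $V_1$. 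Hence the exponential lies in $G'$ and $\langle f\rangle\in G'\langle v_{\pi_\alpha-\alpha}\rangle=G'/P'$. The exponential parametrisation of the cell, not the enveloping-algebra description of a Demazure-type submodule, is the missing ingredient.
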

\begin{proof} Let $\langle v \rangle\in G/P\cap\Bbb P(V_1)$. Since
  $V_1$  is a miniscule representation, without a loss of generality
  we may assume that
  $v_{\pi_\alpha-\alpha}\neq 0$. Indeed we can find a component
$v_\omega\neq 0$ for some weight $\omega$. Since the Weyl group of the root system
 $\Delta^{'}$ is acting transitively on the weights of
$V_1$, there exists an element  $w$ of this Weyl group, translating the weight
$\omega$ in  $\pi_\alpha-\alpha$. Let us note that any representative
$n_w\in G^{'}$ of the element $w$ maps $G/P$ and $\Bbb P(V_1)$ into themselves.
Besides the component of the weight $\pi_\alpha-\alpha$ of the vector $n_wv$
is nonzero (since the component  $(n_wv)_{\pi_\alpha-\alpha}$
is proportional to  $v_\omega$ with nonzero coefficient.)
Instead of the point $\langle v\rangle$ we may consider the
 line $n_w\langle v\rangle\in G/P\cap\Bbb P(V_1)$.

Let us notice that if  $v_{\pi_\alpha-\alpha}\neq 0$, then the component
$(s_{\alpha}v)_{\pi_{\alpha}}=s_\alpha v_{\pi_\alpha-\alpha}$
of the vector $s_\alpha v$ is nonzero (here we used that
 $s_\alpha \pi_\alpha=\pi_\alpha-\alpha$). By the Lemma
\ref{BGG} we get that $\langle s_\alpha v\rangle$ belongs to the open cell
 $P^-_uP/P$. The latter is equivalent to the equality $\langle
s_\alpha v\rangle=\langle u(s_{\alpha}v)_{\pi_{\alpha}}\rangle$ for $u\in P^-_u$.
Applying the exponential presentation of the element
 $u\in P^-_u$ (i.e. $u=\exp(\sum_{\gamma \in
\Delta_{\pp_u}}c_\gamma e_{-\gamma})$ where $c_\gamma \in \Bbb K$
 and $e_{-\gamma}$ is an element of the standard basis of the Lie algebra $\gg$),
we can represent $\langle v\rangle$ in the form

$$\langle v\rangle=\langle s_\alpha (\exp(\sum \limits_{\gamma \in \Delta_{\pp_u}}c_\gamma e_{-\gamma})(s_{\alpha}v)_{\pi_{\alpha}})\rangle=
\langle \exp(\sum \limits_{\gamma \in \Delta_{\pp_u}}c_\gamma
s_\alpha e_{-\gamma})v_{\pi_\alpha-\alpha}\rangle=$$
$$=\langle v_{\pi_\alpha-\alpha} +
\frac{\sum c_\gamma s_\alpha
e_{-\gamma}}{1!}v_{\pi_\alpha-\alpha}+\frac{(\sum c_\gamma
s_\alpha e_{-\gamma})^2}{2!}v_{\pi_\alpha-\alpha}+\ldots\rangle.$$

Let us show that $\langle v\rangle$ belongs to a $G^{'}$-orbit of the vector
$v_{\pi_\alpha-\alpha}$. Assume that it is not so. Then in the exponential representation
we can find  $\gamma$, such that
$s_\alpha\gamma \notin \Delta^{'}$. The weight $\pi_\alpha-\gamma$
is extremal by the Corollary  \ref{wtp_u}, in other words  $\gamma$ cannot be represented as a sum of at least
two roots from $s_\alpha\Delta_{\pp_u}$. Let us show that the vector
$v$ has nonzero component of weight
$s_\alpha(\pi_\alpha-\gamma)=\pi_\alpha-\alpha-s_\alpha\gamma$ that is equal to
 $c_\gamma e_{-s_\alpha\gamma}v_{\pi_\alpha-\alpha}$.
 From the exponential representation and extremality of the weight
 $\pi-\gamma$ we obtain that the component of weight
$s_\alpha(\pi-\gamma)$ is equal to $c_\gamma
e_{-s_\alpha\gamma}v_{\pi_\alpha-\alpha}$. Assume that this component is zero.
This will imply that the vector
$v_{\pi_\alpha-\alpha}$ is the lowest weight vector of the representation
 of the three dimensional  algebra $\mathfrak{sl}_2$, generated by the triple
$\{e_{-s_\alpha\gamma},h_{s_\alpha\gamma},e_{s_\alpha\gamma}\}$.
That is impossible by the following chain of inequalities
  $(\pi_\alpha-\alpha;s_\alpha\gamma
  )=(s_\alpha\pi_\alpha;s_\alpha\gamma)=(\pi_\alpha;\gamma)>0$.
  Where the last inequality is due to the fact that $\gamma \in \pp^-_u$ and
$P$ is the stabilizer of the line spanned by the highest weight vector with the weight $\pi_\alpha$.

Let us notice that the component of the vector $v$  of weight
$\pi_\alpha-\alpha-s_\alpha\gamma$ does not belong to $V_1$.  We come to the contradiction
since in the exponential representation we have   $\gamma$, such that $s_\alpha\gamma \notin \Delta^{'}$.
Thus we obtain that $\sum_{\gamma \in \Delta_{\pp_u}}c_\gamma
e_{-s_\alpha\gamma}\in \gg^{'}$. That means that $\exp(\sum_{\gamma \in
\Delta_{\pp_u}}c_\gamma e_{-s_\alpha\gamma})\in U\cap G^{'}$.

From the above we have the inclusion $G/P\cap\Bbb P(V_1) \subseteq
G^{'}\langle v_{\pi_\alpha-\alpha} \rangle$. And from the
$G^{'}$-invariance of the intersection we obtain the equality
 $G/P\cap\Bbb P(V_1) = G^{'}/P^{'}$.\end{proof}

\begin{proposition} Let $G$ be the group of one of the types considered above.
 And let $i:G/P\subset \Bbb
P(V(\pi_\alpha))$ be the embedding of the flag variety in the projectivization of miniscule
representation. For the action of torus $T$  consider the set of stable points
  $(G/P)^{s}_T$ with respect to the sheaf
$i^*\mathcal O(1)$ .  Then the complement to the set  $(G/P)^{s}_T$
in $G/P$ has the codimension $>1$.

The codimension of the points from $G/P$ with the stabilizer strictly containing  $Z(G)$ is strictly greater than $1$.
\end{proposition}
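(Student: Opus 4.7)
The plan is to translate both codimension estimates into convex/combinatorial questions about the weight polytope of $V(\pi_\alpha)$, using Mumford's numerical criterion together with Corollary~\ref{wtp_u}, and then verify the resulting facts case-by-case for the five root systems.

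By Mumford's numerical criterion for the action of $T$ on $G/P\subset\Bbb P(V(\pi_\alpha))$ linearized by $i^{*}\mathcal O(1)$, a point $x=\langle v\rangle$ is stable if and only if $0$ lies in the interior of the convex hull of $\supp(v)\subset X^{*}(T/Z(G))_{\Bbb Q}$. This rewrites the complement of $(G/P)^{s}_T$ as the union over nonzero one-parameter subgroups $\lambda$ of $T/Z(G)$ of the closed $T$-invariant subvarieties
$$
Z_\lambda=G/P\cap\Bbb P\Bigl(\bigoplus_{\langle\lambda,\omega\rangle\geq 0}V(\pi_\alpha)_\omega\Bigr),
$$
of which only finitely many are distinct (one per chamber of the hyperplane arrangement $\{\omega^{\perp}\}_{\omega}$ in $\hh$). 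Similarly, since $t\in T_x$ forces all weights in $\supp(v)$ to take a common value on $t$, the locus $\{x:T_x\supsetneq Z(G)\}$ is covered by the fixed loci $(G/P)^{\mu}$ for nontrivial one-parameter subgroups $\mu$ of $T/Z(G)$, together with the fixed loci of noncentral torsion elements.

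To bound $\codim Z_\lambda$ I apply the Bia\l ynicki-Birula decomposition with respect to $\lambda$. The $T$-fixed points of $Z_\lambda$ are the $p_\omega=\langle v_\omega\rangle$ with $\langle\lambda,\omega\rangle\geq 0$, and by Corollary~\ref{wtp_u} the tangent directions at such $p_\omega$ correspond to the edges of the weight polytope of $V(\pi_\alpha)$ emanating from the vertex $\omega$. The $\lambda$-BB cell at $p_\omega$ has dimension equal to the number of edges from $\omega$ to neighbours of strictly larger $\lambda$-value, so $\codim_{G/P}Z_\lambda\geq 2$ is equivalent to the combinatorial assertion that every vertex $\omega$ with $\langle\lambda,\omega\rangle\geq 0$ admits at least two edges of the weight polytope going to vertices of strictly smaller $\lambda$-value. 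This combinatorial check is the main obstacle; I plan to carry it out from the Weyl group description of the (quasi-)miniscule weight polytope in each of the types $A_4,D_5,E_6,E_7,E_8$, with $E_8$ requiring extra attention because the zero weight appears in $V(\pi_\alpha)$.

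For the second estimate, the tangent space to $(G/P)^{\mu}$ at the $T$-fixed point $p_\omega=\langle v_{w\pi_\alpha}\rangle$ equals the $\mu$-invariant subspace of $T_{p_\omega}(G/P)$, so the codimension of the component through $p_\omega$ equals the number of $\gamma\in\Delta^{+}_{\pp_u}$ with $\langle\mu,w\gamma\rangle\ne 0$ (and analogously $\ne 1$ for a noncentral torsion $t$). A codimension-one component would force $\mu$ (respectively $t$) to annihilate all but one element of $w\Delta^{+}_{\pp_u}$. Since $\Delta^{+}_{\pp_u}$ generates the root lattice of $\Delta$ as an abelian group and, in each of our five cases, continues to generate it after the removal of any single element --- a direct check from the explicit list of these roots --- such $\mu$ must vanish (respectively $t\in Z(G)$), completing the proof when combined with the first estimate.
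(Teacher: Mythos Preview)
Your route is valid in outline but much heavier than the paper's. For the first assertion the paper cites from \cite{zh} the description
\[
(G/P)^s_T\;=\;\bigcap_{\tilde w\in W}\;\bigcup_{w\in W^{st}_{\pi_\alpha}}\tilde wBwP/P,\qquad W^{st}_{\pi_\alpha}=\{w:(w\pi_\alpha,\lambda)\le 0\ \text{for all }\lambda\in C\},
\]
and then observes that, $P$ being maximal, $G/P$ has a \emph{unique} Schubert cell of codimension one, namely $Bw_0s_\alpha P/P$. Hence the whole claim reduces to checking $w_0s_\alpha\in W^{st}_{\pi_\alpha}$, which comes down to the single inequality $(\pi_\alpha,\pi_\alpha)>\tfrac12(\alpha,\alpha)$ together with $(\pi_\alpha,\pi_\gamma)>0$ for simple $\gamma$. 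The stabilizer assertion is simply quoted from \cite[Prop.~2.4]{skor}. By contrast, your edge-count must range over all chambers of the arrangement $\{\omega^\perp\}$, a substantial verification for $E_7$ and $E_8$; the paper bypasses it with one weight identity.

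One concrete gap: your generation claim for the stabilizer part is false for $A_4$, where $|\Delta^+_{\pp_u}|=4=\rk\Delta$, so removing any one root leaves only three, which cannot span a rank-$4$ lattice. In fact the proposition itself fails for $A_4$: on $G/P=\Bbb P^4$ the complement of the stable locus is the union of coordinate hyperplanes, of codimension $1$, and these same hyperplanes are precisely the points with non-central stabilizer. (The paper's inequality fails there too, since $(\pi_1,\pi_1)=4/5<1$.) The statement is only used, and only correct, for $D_5,E_6,E_7,E_8$; the base case $A_4$ is handled separately via \cite{SKO1}. Finally, your caution about $E_8$ is warranted: by Corollary~\ref{wtp_u} one tangent direction at each $T$-fixed point goes toward the zero weight rather than to a neighbouring vertex, so the identification ``tangent directions $\leftrightarrow$ edges of the weight polytope'' needs a small correction in that case.
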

\begin{proof} Let us denote by $W^{st}_{\pi_\alpha}$ the set of elements
 $w\in W$, such that $\langle
w\pi_\alpha;\lambda\rangle\leqslant 0$ for every  $\lambda \in C$.
By the Theorem  1.6 \cite{zh}  the set of stable points is described by the following formula:
$$(G/P)^{s}_T=\bigcap \limits_{\tilde{w}\in W}\bigcup \limits_{w \in W^{st}_{\pi_\alpha}}\widetilde{w}BwP/P.$$
Let us note that in the Schubert decomposition for $G/P$ there exists only one cell of codimension 1,  i.e.
 $Bw_0s_\alpha P/P$. Using the previous formula we see that for the proof of proposition
it is sufficient to show that
$w_0s_\alpha\in W^{st}_{\pi_\alpha}$.

Let us set $w_0\lambda=-\sum a_\gamma \pi_\gamma \in -C$, where $a_\gamma\geqslant0$.
 The following calculation shows that $w_0s_\alpha\in W^{st}_{\pi_\alpha}$:
 $$-( w_0s_{\alpha} \pi_\alpha;\lambda)=(
\pi_{\alpha}-\alpha;-w_0\lambda)=a_\alpha((\pi_{\alpha};\pi_{\alpha})-\frac{(\alpha;\alpha)}{2})+\sum
\limits_{\gamma \neq \alpha ;\gamma \in
\Pi}a_{\gamma}(\pi_{\alpha};\pi_{\gamma})> 0,$$ where we used the fact  that in considered cases
$(\pi_{\alpha};\pi_{\alpha})-\frac{(\alpha;\alpha)}{2}>0$,  and that
 $(\pi_{\alpha};\pi_{\gamma})> 0$ for a simple root system $\Delta$ (see also \cite[Prop. 2.4]{skor}).

For the last assertion see \cite[Prop. 2.4]{skor}.
\end{proof}

\section{The quotient of $G/P$ by the one-parameter subgroup $\lambda$}

 Let us define the one-parameter subgroup $\lambda$ corresponding to the weight $\pi_\alpha$,
  by means of the pairing with the weights  $\chi \in \Xi(T)$ by the following formula
 $$\chi(\lambda(t))=t^{\langle \pi_\alpha; \chi \rangle}.$$

Consider the action  $\lambda$ on  $G/P$ by the left translations.
Let us fix a linearization of the action $\lambda$, defined by a  $G$-linearized sheaf $i^*\mathcal O(1)$,
where $i:G/P \hookrightarrow \Bbb P(V(\pi_\alpha))$ is the
$G$-equivariant embedding. We can consider the set of stable points
 $(G/P)^{s}$ with respect to the action  $\lambda$
 (from the proof of the next  theorem it will be seen that the set of stable points
 $(G/P)^{s}$
 in the case of root system distinct from $E_8$, coincide with the set $(G/P)^{ss}$ of semistable points).
 By  $p_{\pi_\alpha}: (G/P)^{s}\longrightarrow \lambda \backslash \!\!\backslash (G/P)^s$ we denote a quotient by
 the action of one-parameter subgroup  $\lambda$.

\begin{theorem}\label{lambda factor} Let  $G$  be a simple group of one of the considered types
except $E_8$. The geometric quotient
$\widetilde{X}=\lambda  \backslash \!\!\backslash (G/P)^{s}$ is equal to the blow up of
$\Bbb P(V_1)$ in the flag variety  $G^{'}/P^{'}$.
\end{theorem}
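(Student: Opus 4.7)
The plan is to construct the quotient using the $\lambda$-invariant linear projection $V(\pi_\alpha) \twoheadrightarrow V_1$, verify that it descends to a morphism from $\widetilde{X}$ to $\mathbb{P}(V_1)$, and then identify that morphism with the blow-up of $\mathbb{P}(V_1)$ along $G^{'}/P^{'}$ via a local computation.

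Since each summand $V_i$ in the decomposition $V(\pi_\alpha) = \langle v_{\pi_\alpha}\rangle \oplus V_1 \oplus V_2 \oplus V_{\geq 3}$ is by construction a single $\lambda$-weight space (of common weight $\langle \pi_\alpha,\pi_\alpha\rangle - i$), the linear projection $\mathrm{pr}_1 : V(\pi_\alpha) \to V_1$ is $\lambda$-equivariant. It therefore induces a $\lambda$-invariant rational map
$$
\phi : G/P \dashrightarrow \mathbb{P}(V_1), \qquad [v] \mapsto [\mathrm{pr}_1(v)],
$$
which descends to a rational map $\bar\phi: \widetilde{X} \dashrightarrow \mathbb{P}(V_1)$. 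I would then carry out a Hilbert--Mumford analysis of the $\lambda$-action on $i^*\mathcal{O}(1)$ to conclude that, for $\Delta\neq E_8$, the $\lambda$-unstable locus of $G/P$ reduces to the extremal fixed points $[v_{\pi_\alpha}]$ and (in type $E_7$, by Remark \ref{V_>3}) $[v_{-\pi_\alpha}]$; in particular $(G/P)^s = (G/P)^{ss}$. Every non-fixed $\lambda$-orbit in $(G/P)^s$ then specialises to one of the intermediate fixed loci $G^{'}/P^{'} = G/P \cap \mathbb{P}(V_1)$ or $G/P \cap \mathbb{P}(V_2)$ and must intersect the open set $\{v_1 \neq 0\}$, since a stable orbit contained entirely in $\langle v_{\pi_\alpha}\rangle \oplus V_{\geq 2}$ would have to limit to one of the two excluded extremal points. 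Hence $\bar\phi$ is a morphism on all of $\widetilde{X}$.

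To identify $\bar\phi$ with the blow-up, I work locally near $s = [v_{\pi_\alpha-\alpha}] \in G^{'}/P^{'}$ using Lemma \ref{BGG}: a neighbourhood of $s$ in $G/P$ is parametrised by $\exp\!\bigl(\sum_\gamma c_\gamma e_{-\gamma}\bigr)\,v_{\pi_\alpha-\alpha}$, with $\gamma$ running over positive roots transverse to the stabiliser of $s$. The coefficients with $\gamma \in \Delta^{'}$ produce the $V_1$-component and parametrise the open cell of $G^{'}/P^{'} \subset \mathbb{P}(V_1)$, while those with $\gamma \in \Delta_{\pp_u}$ produce linearly independent $V_2$-components; by the extremality statement in Corollary \ref{wtp_u}, the latter span exactly the normal space to $G^{'}/P^{'}$ in $\mathbb{P}(V_1)$ at $s$, with the dimension count ensured by the isomorphism $V_1 \cong \pp^-_u$ of Lemma \ref{V_1=p^u}. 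In type $E_7$, the single extra $V_3$-coordinate is absorbed by $\lambda$-normalisation. Dividing by $\lambda$ then matches this chart with a standard chart of the blow-up, and over the complement of $G^{'}/P^{'}$ the map $\bar\phi$ is already an isomorphism (as $\lambda$ uniquely normalises $v_1$ in each orbit). Applying the universal property of the blow-up yields $\widetilde X \cong Bl_{G^{'}/P^{'}}(\mathbb{P}(V_1))$.

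The main obstacle is the last local identification: one must check that the transverse $V_2$-directions produced by the exponential correspond precisely to the normal directions of $G^{'}/P^{'} \subset \mathbb{P}(V_1)$, which rests essentially on the extremality in Corollary \ref{wtp_u} together with the dimension balance $\dim V_1 = \dim G/P$ from Lemma \ref{V_1=p^u}. The Hilbert--Mumford step in type $E_7$ also demands care, in order to establish that $[v_{-\pi_\alpha}]$ is an isolated unstable point rather than part of a larger unstable component.
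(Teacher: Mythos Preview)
Your overall strategy---project to $\mathbb{P}(V_1)$ and identify the result as a blow-up along $G'/P'$---is exactly the paper's, but two points need correction.

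First, your Hilbert--Mumford step is wrong as stated. The $\lambda$-unstable locus of $G/P$ is \emph{not} reduced to the extremal fixed points: it is the union of $G/P\cap\mathbb{P}(\langle v_{\pi_\alpha}\rangle\oplus V_1)$ and $G/P\cap\mathbb{P}(V_{\geq 2})$, both of which are positive-dimensional (the first is a cone over $G'/P'$, see Lemma~\ref{unst} and Remark~\ref{equation}). The conclusion you want---that $\bar\phi$ is everywhere defined on $(G/P)^{ss}$---is nonetheless true, but for a different reason, given in the paper as Lemma~\ref{p_oexist}: if $[v]$ is semistable with $v_1=0$, then the positive-weight part forces $v_{\pi_\alpha}\neq 0$, and the exponential parametrisation of the big cell together with $V_1\cong\pp^-_u v_{\pi_\alpha}$ (Lemma~\ref{V_1=p^u}) forces every coefficient $c_\gamma$ to vanish, so $[v]=[v_{\pi_\alpha}]$, a contradiction. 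Your limit argument does not supply this.

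Second, the paper does not finish via the universal property of the blow-up. Instead it separates the analysis into the open cell (Step~1: $\lambda\backslash\!\!\backslash(P^-_uP/P)^{ss}\cong\mathbb{P}(V_1)\setminus G'/P'$) and its complement (Step~2: the stable orbits with $v_{\pi_\alpha}=0$ form $G'*_{P'}\mathbb{P}(\mathcal{N})$, the projectivised conormal bundle, via Lemma~\ref{cotang}), and then applies the Moishezon contraction theorem: one checks (Proposition~\ref{D|D}) that the line bundle $\mathcal{O}(D_{\mathcal{N}})$ restricts to $\mathcal{O}(-1)$ on each fibre of $p_0$ over $G'/P'$, which is done by an explicit descent computation with $\lambda$-linearised sheaves. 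Your local chart computation near $[v_{\pi_\alpha-\alpha}]$ is essentially the content of Step~2 (and Lemma~\ref{cotang} is precisely the identification of the $V_2$-directions with the conormal space that you invoke), but the passage from ``matches a standard chart'' to ``is globally the blow-up'' is where the paper substitutes Moishezon's criterion for the universal-property argument you sketch. If you want to avoid Moishezon, you would need to verify that the inverse image of $\mathcal{J}_{G'/P'}$ is invertible on all of $\widetilde{X}$ and that the induced map to $Bl_{G'/P'}(\mathbb{P}(V_1))$ is an isomorphism, not merely a morphism; your sketch does not address this.
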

\begin{proof}
We shall need a following lemma.

\begin{lemma}\label{p_oexist} There exist a projection  $p_0$ from the considered
quotient  $\widetilde{X}$ to $\Bbb
P(V_1)$.
\end{lemma}
\begin{proof}
 Let $\langle v\rangle \in (G/P)^{ss}$. To the point $\langle v\rangle $
 we assign the point $\langle v_1\rangle \in
\Bbb P(V_1)$, where $v_1$ is the projection of $v$ to $V_1$ along the subspace $\Bbb Kv(\pi_\alpha)\oplus V_{\geqslant2}$.

 Let us notice that this map factors through $\widetilde{X}$.
This is a corollary of universal property of quotient $\lambda$
and the fact that action on $\Bbb
P(V_1)$ is trivial (the action of $\lambda(t)$  multiplies the vectors from
$V_1$ by $t^{\langle \pi_\alpha; \pi_\alpha-\alpha
\rangle}=t^{\langle \pi_\alpha; \pi_\alpha\rangle-1}$).

 It is left to check that $p_0$ is well defined on $(G/P)^{ss}$.
 To prove this let us notice that for the groups of considered type $G$ (
 except $E_8$) the linear subspace $\langle v_{\pi_\alpha} \rangle \oplus V_1$
has positive weights with respect to the action of $\lambda$, and the subspaces  $ V_2 \oplus
V_{\geqslant 3}$ has negative weights ( in the case of  $E_8$ the subspace $V_2$
has zero weight and  $V_{\geqslant 3}$ has negative).
Thus for the stability of $\langle v\rangle$ it is necessary that the component
of  $v$ in the subspace  $\langle v_{\pi_\alpha} \rangle
\oplus V_1$ is not equal to zero. Let the map $p_0$ is not defined in $\langle v\rangle$,
i.e. $v_1=0$. Then since
$\langle v\rangle\in(G/P)^{ss}$ we have $v_{\pi_\alpha}\neq0$.
Thus we obtain that the vector  $v$  belongs to the open cell
$P^-_uP/P$. Let us represent $v$ as the exponential map  from the element of the Lie algebra $\pp^{-}_u$,
applied to the vector $v_{\pi_\alpha}$:
$$v=\exp(\sum \limits_{\gamma \in \Delta_{P_u}}c_\gamma e_{-\gamma})v_{\pi_\alpha}=v_{\pi_\alpha} +
\underbrace{\frac{\sum c_\gamma
e_{-\gamma}}{1!}v_{\pi_\alpha}}_{\in V_1}+\underbrace{\frac{(\sum
c_\gamma e_{-\gamma})^2}{2!}v_{\pi_\alpha}}_{\in V_2}+\ldots.$$

Since $v_1=0$
% компонента $v$, лежаща€ в $V_1$, нулева€,
all the coefficients $c_\gamma$  are equal to zero (
the vectors $e_{-\gamma}v_{\pi_\alpha}$ for $\gamma\in \Delta_{\pp^-_u}$ have different weights and form
 the basis  $\pp^-_u v_{\pi_\alpha}\cong V_1$). Thus
$\langle v\rangle=\langle v_{\pi_\alpha}\rangle$,  but the latter point is unstable.
\end{proof}

We split the proof of the theorem into a few steps. First we construct the quotient of stable
orbits lying in the open cell,
then for the orbits lying in the complement of the open cell
(Steps 1,2). Then using the projection $p_0$ onto the quotient
$\widetilde{X}$ and the Moishezon contraction theorem \cite{Moishezon},
we obtain that these sets glue together in a blow up (Step 3).

{\bf Step 1.}  Consider the open cell $P^-_uP/P$. Let us show that the quotient
 $\lambda \backslash \!\!\backslash  (P^-_uP/P)$ is isomorphic to the projective
space $\Bbb P(V_1)$ with the deleted flag variety $G^{'}/P^{'}$.

As before let us use an exponential representation $v$:
$$v=\exp(\sum \limits_{\gamma \in \Delta_{P_u}}c_\gamma e_{-\gamma})v_{\pi_\alpha}=v_{\pi_\alpha} +
\underbrace{\frac{\sum c_\gamma
e_{-\gamma}}{1!}v_{\pi_\alpha}}_{\in V_1}+\underbrace{\frac{(\sum
c_\gamma e_{-\gamma})^2}{2!}v_{\pi_\alpha}}_{\in V_2}+\ldots.$$

Let us notice that the  set of coefficients $\{c_\gamma\}_{\gamma \in \Delta_{P_u}}$ considered up to a multiplication by a constant с
defines the orbit of
$\lambda$, thus the quotient $\lambda
\backslash \!\!\backslash  ((P^-_uP/P)\setminus \langle v_{\pi_\alpha} \rangle) $,
is identified with $\Bbb P(V_1)$. To obtain that $\lambda
\backslash  \!\!\backslash (P^-_uP/P)^{ss}$
we have to delete from $\Bbb P(V_1)$ those points
 $(c_{\gamma_1}:\ldots:c_{\gamma_{\dim V_1}})$ for which the corresponding orbits
  $\lambda(t) \langle v \rangle $ are unstable.

The orbit of the point $\langle v\rangle$ from the open cell is unstable iff
 $v$ belongs to $\Bbb K
v(\pi_\alpha) \oplus V_1$. Let us prove the following lemma:

\begin{lemma}\label{unst} Let $v\in\Bbb K
v(\pi_\alpha) \oplus V_1$. Then we have an inequality: $$\lim \limits_{t\rightarrow 0}\lambda(t)(G/P\cap \Bbb P(\Bbb K
v(\pi_\alpha) \oplus V_1))=(G/P)\cap \Bbb P(V_1)=G^{'}/P^{'}.$$

\end{lemma}
\begin{proof} Applying Proposition \ref{intersection} we see that we need to prove the following
limit:

$$\lim \limits_{t\rightarrow 0}\lambda(t)\langle
v\rangle=\lim \limits_{t\rightarrow 0} \langle t^{\langle
\pi_\alpha; \pi_\alpha \rangle}v_{\pi_\alpha} + \frac{\sum
c_\gamma t^{\langle \pi_\alpha; \pi_\alpha-\gamma
\rangle}e_{-\gamma}}{1!}v_{\pi_\alpha}\rangle=$$
$$=\lim
\limits_{t\rightarrow 0} \langle tv_{\pi_\alpha} + \frac{\sum
c_\gamma e_{-\gamma}}{1!}v_{\pi_\alpha}\rangle=\langle\frac{\sum
c_\gamma e_{-\gamma}}{1!}v_{\pi_\alpha}\rangle \in \Bbb P(V_1).
$$
Here we used that $\langle \pi_\alpha; \gamma
\rangle=1$ for all $\gamma \in \Delta_{P^u}$. We also notice that $\langle
v_1\rangle=\lim \limits_{t\rightarrow 0}\lambda(t)\langle
v\rangle$.
\end{proof}

By Lemma \ref{unst} we get that for $\langle v\rangle \notin (P^{-}_uP/P)^{ss}$
we have the inclusion $\langle v_1\rangle\in G^{'}/P^{'}$.
Let us show that this condition is also sufficient.
Consider the vector $v=\exp(e_{-\alpha})v_{\pi_\alpha}=v_{\pi_\alpha}+e_{-\alpha}v_{\pi_\alpha}$, it is easy to see that
  $\langle v\rangle \notin (G/P)^{ss}$. Since  $G^{'}$ and $\lambda$ commute
  the orbit $G^{'}\langle v\rangle$ is unstable. Besides the projection is equal to
    $p_0(G^{'}\langle v\rangle)=G^{'}\langle v_1\rangle = G^{'}/P^{'}\subset\Bbb P(V_1)$.

From what was said above and from isomorphism
 $\lambda\backslash \!\!\backslash((P^{-}_uP/P)\setminus \langle v_{\pi_\alpha}\rangle)\cong \Bbb P(V_1)$ we get
that unstable points project by means of $p_0$  exactly onto $G^{'}/P^{'}$.
 Thus we proved that  $\lambda\backslash \!\!\backslash(P^{-}_uP/P)^{ss}\cong\Bbb P(V_1)\setminus (G^{'}/P^{'})$.

{\bf Step 2.} The space of stable orbits from the complement to $P^-_uP/P$
is isomorphic to the projectivization of the conormal bundle in  $\Bbb P(V_1)$
to the flag variety $G^{'}/P^{'}$.

From Lemma \ref{BGG} and \ref{p_oexist},  we get that the stable orbits from the complement of
to $P^-_uP/P$ satisfy the following condition:

\begin{itemize}
 \item[$\bullet$] The component of vector $v$ of weight $\pi_\alpha$ is equal to zero (since $\langle v\rangle \notin P^-_uP/P$).
 \item[$\bullet$] The component of vector $v$ that belong to
 $V_1$ is nonzero (by the stability of $\langle v\rangle $).
\end{itemize}

In particular these orbits belong to the closure of the Schubert divisor $\overline{B^{-}s_\alpha P/P}$. Indeed % первое условие
%эквивалентно тому, что рассматриваемые орбиты лежат в дополнении к
%открытой клетке. ”тверждение следует из того, что
in the в Bruhat decomposition of $G/P$ there is a unique cell of codimension
one $\overline{B^{-}s_\alpha P/P}$; and it contains in its closure all cells of smaller dimension.

\begin{lemma}\label{component_V_1} Let $\langle v\rangle \in \Bbb
P(V(\pi_\alpha))$ be a stable point such that the component of vector
 $v$ of the weight $\pi_\alpha$  is nonzero.
 Then the component  $v_1$  of vector $v$ that belong to
 $V_1$ is distinct from zero and $\langle v_1\rangle \in \Bbb
P(V_1)\cap G/P$.
\end{lemma}
\begin{proof}
Consider the decomposition $v=\sum_{i\geqslant 1} v_i$ for $v_i \in V_i$,
besides we  are given $v_1 \neq 0$. Then we have an equality:
$$\lim_{t \rightarrow \infty} \langle \lambda(t)v\rangle=
\lim_{t \rightarrow \infty} \langle v_1+ \sum_{i> 1}
t^{-i+1}v_i\rangle=\langle v_1\rangle.$$ Since $\lim_{t
\rightarrow \infty} \langle \lambda(t)v\rangle \in G/P$ this proves the lemma.
\end{proof}

Let us notice that the action of $G^{'}$ commutes with $\lambda(t)$, and
the intersection $\Bbb P(V_1)\cap G/P=G^{'}/P^{'}$ is a single orbit
by Lemma \ref{intersection}. By Lemma \ref{component_V_1}
for the component $v_1\in V_1$ of vector $v$ we have the inclusion
$\langle v_1\rangle \in G^{'}/P^{'}$. Acting by an element from the group  $G^{'}$
we may assume that  $v_1=v_{\pi_\alpha-\alpha}$. Let us represent an element  $v$ in terms of exponential map в
 (from the element of $\uu$) applied to the vector
$v_{\pi_\alpha-\alpha}$.
$$v=v_{\pi_\alpha-\alpha} +
\frac{\sum c_\gamma
e_{-\gamma}}{1!}v_{\pi_\alpha-\alpha}+\frac{(\sum c_\gamma
 e_{-\gamma})^2}{2!}v_{\pi_\alpha-\alpha}+\ldots.$$

By the assumptions we have $\pi_\alpha-\alpha-\gamma \notin \supp(V_1)$ that implies that the coefficients $c_\gamma$
 can be nonzero only for the roots of type $\gamma=m\alpha+k\beta+\sum_{\theta \in
\Pi^{''}} a_\theta\theta$ where $a_\theta\geqslant 0$ only when $m,k>0$.

Since $P^{'}$ stabilizes $\langle v_{\pi_\alpha-\alpha}\rangle$ the vectors with the weights
 $\pi_\alpha-\alpha+\gamma$ form a $P^{'}$-module
$\mathcal N$. (The vector subspace
 $\mathcal N$ is isomorphic to the module
$\pp^-_uv_{\pi_\alpha-\alpha}$. The structure of the $P^{'}$-module  $\pp^-_uv_{\pi_\alpha-\alpha}$
is defined as follows. For $p^{'}\in
P^{'}$ taking into account the equality $p^{'}v_{\pi_\alpha-\alpha}=v_{\pi_\alpha-\alpha}$ we get
$p^{'}\pp^-_uv_{\pi_\alpha-\alpha}=(\Ad(p^{'})\pp^-_u)(p^{'}v_{\pi_\alpha-\alpha})\subset \pp^-_uv_{\pi_\alpha-\alpha},$
 where $\Ad(P^{'}):\pp^{-}_u$ is an adjoint representation.)

\vspace{1ex}

Let us prove the following lemma on the structure of the $P^{'}$-module $\mathcal N$.

\begin{lemma}\label{cotang} Let $\gamma\in \Delta_{\pp_u}$ be a positive root such that $\pi_\alpha-\alpha+\gamma \in \supp(\mathcal
N)$. Consider a decomposition $\gamma=m\alpha+k\beta+\sum_{\theta \in
\Pi^{''}} a_\theta\theta$ of the root $\gamma$ in the sum of simple roots.
  Then in this decomposition we have  $k=2$, $m=1$. We have the inclusion
   $\mathcal N \subset V_2$. And the equality
   $(\alpha;\gamma)=0$ take place.

    The module $\mathcal N$ is a simple $P^{'}$-module with the trivial $P^{'}_u$-action.
      As $G^{''}\ltimes P^{'}_u$-module it is isomorphic to the fiber of the conormal bundle
      to the flag variety $G^{'}/P^{'}\subset \Bbb P(V_1)$  in the point $\langle
v(\pi_\alpha-\alpha) \rangle$.
\end{lemma}
\begin{proof} Let us denote by  $V^{'}_1$ and $V^{'}_2$ the irreducible $L^{'}$-submodules in $V_1$ that are the graded components
of the weights
$\langle \pi_\beta^{'}; \pi_\beta^{'}\rangle -1$ and $\langle \pi_\beta^{'}; \pi_\beta^{'}\rangle -2$ correspondingly with respect to
the pairing with $\pi_\beta^{'}$.

First let us notice that as  $L^{'}$-module the fiber of cotangent bundle to $G^{'}/P^{'}$
in the point $\langle v(\pi_\alpha-\alpha)\rangle$ can be identified with the module
 $\pp^{'-}_u v(\pi_\alpha-\alpha)$, that is isomorphic to $V^{'}_1$ by Lemma \ref{V_1=p^u}.
 Thus the fiber of conormal bundle in the point $\langle
v(\pi_\alpha-\alpha)\rangle$ is identified with factor module $V_1/(\Bbb K v(\pi_\alpha-\alpha)\oplus
V^{'}_1)$. By the Remark \ref{V_>3}, we have isomorphism of $L^{'}$-modules $V^{'}_2\cong V_1/(\Bbb K v(\pi_\alpha-\alpha)\oplus
V^{'}_1)$. Since the fiber of conormal bundle as
$L^{'}$-module is isomorphic to a simple module $V^{'}_2$, $P_u^{'}$ is acting on it trivially and $Z(L^{'})$ is acting
 by the multiplication with the scalar.
 Thus for the proof of the last part of the lemma it is sufficient to check the isomorphism of  $G^{''}$-modules
 $\mathcal N$ and $V_2^{'}$.

Let the coefficient  $c_\gamma$ is not equal to zero for the root
$\gamma=m\alpha+k\beta+\sum \limits_{ \theta\in
\Pi^{''}}a_\theta\theta$ (where $ a_\theta>0$). Since the weight
 $s_\alpha\pi_\alpha-\gamma$ belongs to the weight polytop,
 the weight  $\pi_\alpha-s_\alpha\gamma$ also lie in this polytop.
Thus we obtain that
$s_\alpha\gamma \in \Delta_{\pp_u}$. Using the equality
$s_\alpha\beta=\beta+\alpha$ and the fact that  $s_\alpha$ fixes \ $\theta\in \Pi^{''}$ we obtain

$$s_\alpha\gamma =-m\alpha+k(\beta-\alpha)+\sum \limits_{
\theta\in \Pi^{''}}a_\theta\theta=k\beta+(k-m)\alpha+\sum
\limits_{ \theta\in \Pi^{''}}a_\theta\theta$$

Since $s_\alpha\gamma \in \Delta_{\pp_u}$ from lemma
\ref{V_1=p^u} it follows that the coefficient of  $\alpha$  in the decomposition of
 $s_\alpha\gamma$ in the sum of simple roots is equal to $1$ that implies
 $k-m=1$. Since $V_1$  is miniscule representation,
for the root systems of types  $A_4,D_5$ or $E_6$ then the coefficient by
$\beta$ in the decomposition of $s_\alpha\gamma$ is not greater than $2$ (this follows from the fact that
the vector of weight  $s_\alpha\gamma-\alpha$ belong to the graded component with the weight not less than
$(\pi_\beta^{'};\pi_\beta^{'})-2$ with respect to the pairing
$(\pi_\beta^{'};\cdot)$, see Remark \ref{V_>3}). Thus we get $m=1$ and
$k=2$. Since the coefficient of $\beta$ in this decomposition is equal to
$2$ the root $s_\alpha\gamma$ belongs to $V_2^{'}$. In other words we get that the element
 $s_\alpha$ maps the module  $\mathcal N$
in $V_2^{'}$. Since the element  $s_\alpha$ is acting trivially on $G^{''}$ by the conjugations
it gives the map of the $G^{''}$-module $\mathcal
N$ into the irreducible $G^{''}$-module $V_2^{'}$. Since
$s_\alpha^2=1$ this map gives the isomorphism of these modules. That is required.

We are finished by providing the following calculation
$$(\alpha;\gamma)=(\alpha;\alpha+2\beta+\sum \limits_{ \theta\in
\Pi^{''}}a_\theta\theta)=(\alpha;\alpha)+2(\alpha;\beta)=0,$$ where
we used the equalities  $(\alpha;\beta)=-1$,
$(\alpha;\theta)=0$ for $\theta\in \Pi^{''}$.

Since $L^{'}$ differs from $G^{''}$ by the central torus from the isomorphism of
 $G^{''}$-modules $V_2^{'}$ and $\mathcal N$ we get the isomorphism of $L^{'}$-varieties $\Bbb P(V_2^{'})$ and $ \Bbb
P(\mathcal N)$.
\end{proof}

\begin{proposition} The orbits $\langle\lambda(t)v\rangle$ of the points $\langle v \rangle\in (G/P)^{ss}$ with the condition
$v_{\pi_\alpha}=0$ are parameterized by the variety $G^{'}*_{P^{'}}\Bbb P(\mathcal N)$. That
is isomorphic to the projectivization of conormal bundle to $G^{'}/P^{'}$.
\end{proposition}
\begin{proof}
It is easy to see that the orbits
$\langle\lambda(t)v\rangle$ with the conditions  $v_{\pi_\alpha}=0$ and $v_1=v_{\pi_\alpha-\alpha}$
are parameterized by the projective space $\Bbb P(\mathcal N)$.
Since $\mathcal N \subset V_2$ the one-parameter subgroup
 $\lambda(t)$ is acting on  $\mathcal
N$ by the multiplication with  $t^{-2}$, and on the vector  $v_{\pi_\alpha-\alpha}$ by the multiplication with $t^{-1}$.
Thus the map
$$\ \ \  \langle v\rangle \rightarrow
\left( \frac{\langle v,v^*(-\pi_\alpha+\alpha+\gamma_1)\rangle}{\langle v,v^*(-\pi_\alpha+\alpha)\rangle}:\ldots:\frac{\langle v,v^*(-\pi_\alpha+\alpha+\gamma_{\dim \mathcal N})\rangle}{\langle v,v^*(-\pi_\alpha+\alpha)\rangle}\right)
$$
identifies the considered space of $\lambda(t)$-orbits  and $\Bbb P(\mathcal N)$.
(Let us notice that $\frac{\langle v,v^*(-\pi_\alpha+\alpha+\gamma)\rangle}{\langle v,v^*(-\pi_\alpha+\alpha)\rangle}$
is the coefficient  $c_\gamma$ by the vector $e_{-\gamma}v_{\pi_\alpha-\alpha}$ in the exponential representation for $\langle v\rangle$.)
% коэффициенты $c_\gamma$,
%задающие орбиту, определены с точностью до пропорционалности.
By the Lemma \ref{cotang} $\Bbb P(\mathcal N)$  is isomorphic to the projectivisation of the fiber
of the conormal bundle in the point $\langle v_{\pi_\alpha-\alpha}\rangle$.

By Lemma \ref{component_V_1} the variety of orbits  $\langle\lambda(t)v\rangle$ such that
$v_{\pi_\alpha}=0$ projects surjectively by means of  $p_0$ onto the flag variety $G^{'}/P^{'}$.
 Since the projection $p_0$ is $G^{'}$-equivariant
we obtain that $G^{'}*_{P^{'}}p_0^{-1}(\langle v_{\pi_\alpha-\alpha}\rangle)\cong G^{'}*_{P^{'}}\Bbb P(\mathcal N)$
(where we used the isomorphism of  $P^{'}$-varieties
$p_0^{-1}(\langle v_{\pi_\alpha-\alpha}\rangle)\cong \Bbb P(\mathcal N)$ from the paragraph).

The homogeneous bundle $G^{'}*_{P^{'}}\Bbb P(\mathcal N)$ is isomorphic to the projectivization of  the conormal bundle to
 $G^{'}/P^{'}$.
Indeed by Lemma  \ref{cotang} the fibers over $eP^{'}/P^{'}$ of the considered bundles
  are isomorphic as  $P^{'}$-variaties. By the
$G^{'}$-equvariance we get the isomorphism of the projective bundles.
\end{proof}

{\bf Step  3.} Denote by $D_{\mathcal N}\subset \widetilde{X}$
the divisor corresponding to the subvariety  $G^{'}*_{P^{'}}\Bbb P(\mathcal
N)$ in $\widetilde{X}$ and by  $\mathcal L_{\mathcal N}$ the
line bundle corresponding to this divisor.

To finish the proof of the theorem  \ref{lambda factor} we shall
use Moishezon contraction theorem (\cite {Moishezon}).

To apply it we need to proof the following proposition.

\begin{proposition}\label{D|D} For every point $x\in D_{\mathcal N}$ the restriction of the line bundle
 $\mathcal L_{\mathcal N}$ on the fiber  $p^{-1}_0(p_0(x))$
(that is the fiber of the projectivization of the normal bundle
${\mathcal N}$ to the flag variety $G^{'}/P^{'}$) is isomorphic to the line bundle
$\mathcal O(-1)$.
\end{proposition}

\begin{proof}
First let us notice that we can assume that the line bundle $\mathcal L_{\mathcal N}$ is
$G^{'}$-linearized. The divisor $D_{\mathcal N}$ is invariant with respect to the action of
  $G^{'}$. The projection $p_0$ is  $G^{'}$-equivariant and the latter group is acting transitively on
$p_0(D_{\mathcal N})\simeq G^{'}/P^{'}$. From the above
it is sufficient to check the condition of the Moishezon contraction theorem
only for one point $x\in D_{\mathcal N}$.

Let us describe the line bundle $\mathcal L_{\mathcal N}$. Let us recall that the preimage
of the divisor  $D_{\mathcal N}$  by the quotient morphism is the divisor that is equal to the intersection
of the Schubert divisor $\overline{B^{-}s_\alpha P/P}$ and $(G/P)^{ss}$. The line bundle
corresponding to this divisor can be described as $\mathcal L=G*_P
k_{\pi_\alpha}$ (where  $k_{\pi_\alpha}$ --- is one dimensional module where $P$
is acting by multiplication with the character $\pi_\alpha$).
Let us notice that the section of the line bundle $\mathcal L_{\mathcal N}$ can
be considered as the function on $V_{\pi_\alpha}$ (since the linear system corresponding
to  $\mathcal L_{\mathcal N}$ defines an embedding $G/P\subset \Bbb P(V_{\pi_\alpha})$). The section $\mathcal
L_{\mathcal N}$ those zero set is equal to $\overline{B^{-}s_\alpha
P/P}$ is described by the equation $\langle v^*_{-\pi_\alpha};.\rangle=0$,
it is semiinvariant with respect to the action of $B^-$  with the weight $-\pi_\alpha$ (cf. for example \cite{zh}).

We want to take the decent of the sheaf  $\mathcal L_{\mathcal N}$ to the sheaf
$\widetilde{\mathcal L}_{\mathcal N}$  on the quotient of the set
$(G/P)^{ss}$ by the one-parameter subgroup  $\lambda$  in such way that
the Weyl divisor defined by the section $\langle
v^*_{-\pi_\alpha};.\rangle=0$ after the quotient morphism
$p_{\pi_\alpha}$ maps in the section of the line bundle $\widetilde{\mathcal
L}_{\mathcal N}$ (cf. \cite{kn}).

We can obtain this by making the section $\langle
v^*_{-\pi_\alpha};.\rangle=0$ invariant with respect to $\lambda$.
This can be achieved  by taking  instead of the line bundle $\mathcal L_{\mathcal N}$ the
line bundle $\mathcal L_{\mathcal N}\otimes
k_{-\pi_\alpha}$ isomorphic to the previous one with the action of $\lambda$ twisted by the
character  $-\pi_\alpha$. The image of the divisor $\overline{B^{-}s_\alpha
P/P}$ on the quotient  $\widetilde{X}$ is defined by the section $\langle
v^*_{-\pi_\alpha};.\rangle=0$ of the line bundle
$\lambda \backslash \!\!\backslash(\mathcal L_{\mathcal N}\otimes
k_{-\pi_\alpha})$.

 For the fiber of  $p_0$ on which we want to restrict the line bundle
 $-\mathcal L_{\mathcal N}$ we chose the fiber over the point
$v_{\pi_\alpha-\alpha}$. Then the fiber over this point is defined by
 $\exp(\sum \limits_{\pi_\alpha-\alpha-\gamma \in \mathcal
N}c_{\gamma}e_{-\gamma})s_\alpha P/P$. The restriction of the line bundle
$-\mathcal L_{\mathcal N}\otimes k_{\pi_\alpha}$  to this fiber is defined by the formula
$$(-\mathcal L_{\mathcal N}\otimes k_{\pi_\alpha})|_{D_\mathcal N}=\exp(\sum \limits_{\pi_\alpha-\alpha-\gamma \in \mathcal
N}c_{\gamma}e_{-\gamma})s_\alpha P*_P (k_{-\pi_\alpha}\otimes
k_{\pi_\alpha}).$$

This is the line bundle over the affine space $\mathcal
N$ and its quotient by  $\lambda$ is the line bundle over the projective space
 $\Bbb P(\mathcal N)$. To prove that the latter line bundle is isomorphic to
 $\mathcal O(1)$ one can check that when we multiply the coordinates $\{c_\gamma\}$ by $t$ the fiber
of the line bundle is multiplied by  $t^{-1}$. This assertion follows from the line of equalities:
$$\lambda(t)^{-1}(-\mathcal L_{\mathcal N}\otimes k_{\pi_\alpha})|_{D_\mathcal N}=$$

$=(\lambda(t)^{-1}\exp(\sum \limits_{\pi_\alpha-\alpha-\gamma \in
\mathcal N}c_{\gamma}e_{-\gamma})\lambda(t)\lambda(t)^{-1}s_\alpha
P*_P k_{-\pi_\alpha})\otimes
\lambda(t)^{-1}k_{\pi_\alpha})=(\exp(\sum
\limits_{\pi_\alpha-\alpha-\gamma \in \mathcal
N}c_{\gamma}\lambda(t)^{-1}e_{-\gamma}\lambda(t))s_\alpha
(s_\alpha\lambda)(t)^{-1}P*_P k_{-\pi_\alpha})\otimes
\lambda(t)^{-1}k_{\pi_\alpha})=(\exp(\sum
\limits_{\pi_\alpha-\alpha-\gamma \in \mathcal
N}c_{\gamma}t^{\langle \pi,\gamma\rangle}e_{-\gamma})s_\alpha P*_P
t^{-\langle
\pi_\alpha-\alpha,-\pi_\alpha\rangle}k_{-\pi_\alpha})\otimes
t^{-\langle
\pi_\alpha,\pi_\alpha\rangle}k_{\pi_\alpha})=(\exp(\sum
\limits_{\pi_\alpha-\alpha-\gamma \in \mathcal
N}c_{\gamma}te_{-\gamma})s_\alpha P*_P t^{\langle
\pi_\alpha-\alpha,\pi_\alpha\rangle}k_{-\pi_\alpha})\otimes
t^{-\langle
\pi_\alpha,\pi_\alpha\rangle}k_{\pi_\alpha})=(\exp(\sum
\limits_{\pi_\alpha-\alpha-\gamma \in \mathcal
N}c_{\gamma}te_{-\gamma})s_\alpha P*_P
t^{-1}k_{-\pi_\alpha})\otimes k_{\pi_\alpha},$

\noindent where we used the equalities $\langle
\alpha,\pi_\alpha\rangle=1$,
 $\lambda(t)^{-1}e_{-\gamma}\lambda(t)=t^{\langle
\pi,\gamma\rangle}e_{-\gamma}$ and also
$(s_\alpha\lambda)(t)=s_\alpha\lambda(t)s_\alpha=t^{\langle
s_\alpha \pi_\alpha,. \rangle}=t^{\langle \pi_\alpha-\alpha,.\rangle}$.
\end{proof}

We apply the Moishezon contraction theorem to the morphism
$p_0:\widetilde{X}\longrightarrow \Bbb P(V_1)$ and exceptional divisor
 $D_{\mathcal N}$ that contracts onto the flag variety
$G^{'}/P^{'}$. We obtain that  $\widetilde{X}$ is the blow up of
$\Bbb P(V_1)$ in the flag variety $G^{'}/P^{'}$. That finishes the proof of the theorem.
\end{proof}

\begin{remark}{\label{equation}} From the proof of the previous theorem we can get the ь
description of $G^{'}/P^{'}\subset \Bbb P(V_1)$ as the set of zeros of the homogeneous system of equations
 (generating the ideal $\mathcal
J_{G^{'}/P^{'}}$). First let us define the open cell  $P^-_uP/P$ from
$G/P$ as the exponential map of the Lie algebra $\pp^{-}_u$ applied to
$v_{\pi_\alpha}$:
$$v=\exp(\sum \limits_{\gamma \in \Delta_{P_u}}c_\gamma e_{-\gamma})v_{\pi_\alpha}=v_{\pi_\alpha} +
\underbrace{\frac{\sum c_\gamma
e_{-\gamma}}{1!}v_{\pi_\alpha}}_{\in V_1}+\underbrace{\frac{(\sum
c_\gamma e_{-\gamma})^2}{2!}v_{\pi_\alpha}}_{\in V_2}+\ldots.$$

As we observed the set of nonstable points with respect to
$\lambda(t)$ is defined by the condition $V_2=0$ that implies that the component in
the subspace $V_{\geqslant3}$ also should be zero. These conditions can be rewritten as the system of equations on $\{c_\gamma\}$:

$$p_\mu(c)=\sum \limits_{\pi-\gamma-\delta=\mu}c_{\gamma}c_{\delta}(e_{-\gamma}e_{-\delta}v_{\pi_\alpha})=0,$$
for $\mu \in Supp(V_2)$. Let us notice that the weights of all quadratic polynomials are different.
 I.e. the component  $\mathcal
J_{G^{'}/P^{'}}(\mu)$  of the weight $\mu$ in the ideal $\mathcal
J_{G^{'}/P^{'}}$ has the dimension $1$.

In the Lemma \ref{unst} it was shown that
$$\lim \limits_{t\rightarrow 0}\lambda(t)(P^-_uP/P\cap \Bbb P(
\Bbb Kv(\pi_\alpha)
 \oplus V_1))=G/P\cap \Bbb P(V_1)=G^{'}/P^{'}.$$ Since the action of $\lambda(t)$
 do not change the set of homogeneous coordinates
$\{c_\gamma\}$ the flag variety  $G^{'}/P^{'}$  in the projective space
 $\Bbb P(V_1)$ is defined by the system of homogeneous equations
$p_{\mu}(c)=0$ (где $\mu \in Supp(V_2)$).

 Later we shall need the equations corresponding to the vanishing of the components lying in $V_{3}$:
$$q_\nu(c)=\sum \limits_{\pi_\alpha-\mu_1-\mu_2-\mu_3=\nu}c_{\mu_1}c_{\mu_2}c_{\mu_3}(e_{-\mu_1}e_{-\mu_2}e_{-\mu_3}v_{\pi_\alpha})=0,$$
 that vanish on the flag variety  $G^{'}/P^{'}$. In the case of $E_7$ it is a single equation for which
  $\nu=-\pi_\alpha$.
\end{remark}

\vspace{2ex}

\section{The quotient by the subgroup $\lambda$ of the flag variety $G/P$ from $ \Bbb P(\gg)$ in tha case of type $E_8$}

Let us study the case when the root system  $\Delta$ has the type
$E_8$.  Instead of miniscule representations $E_8$ we should consider the case of adjoint representation
 $Ad:\gg$. Our first task is to study a $P^{'}$-module $\mathcal N=\langle
\pp^-_uv(\pi_\alpha-\alpha)\rangle$ (cf. fig.2а).

\begin{center}
  \epsfig{file=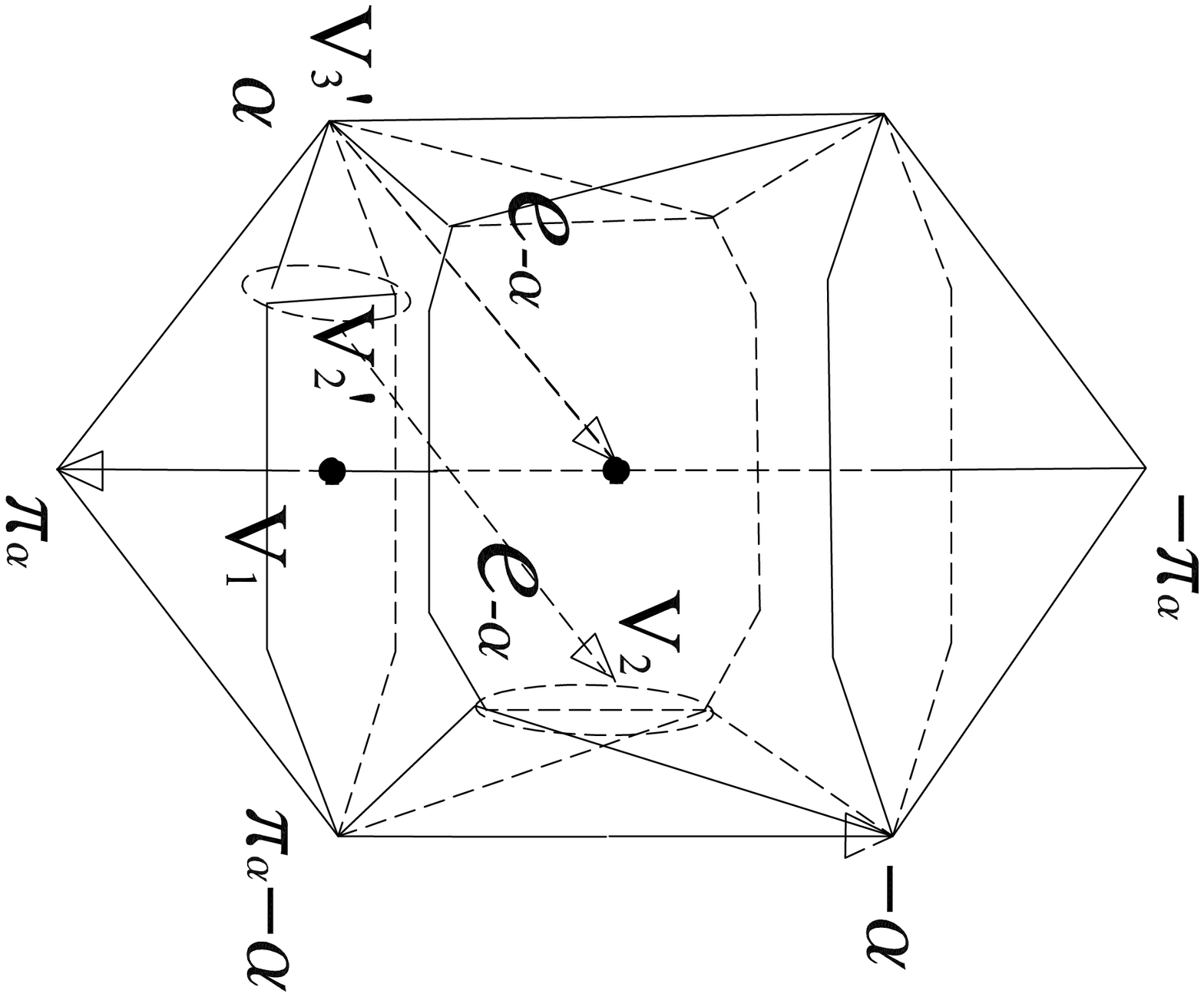,height=5cm,angle=90,clip=} \epsfig{file=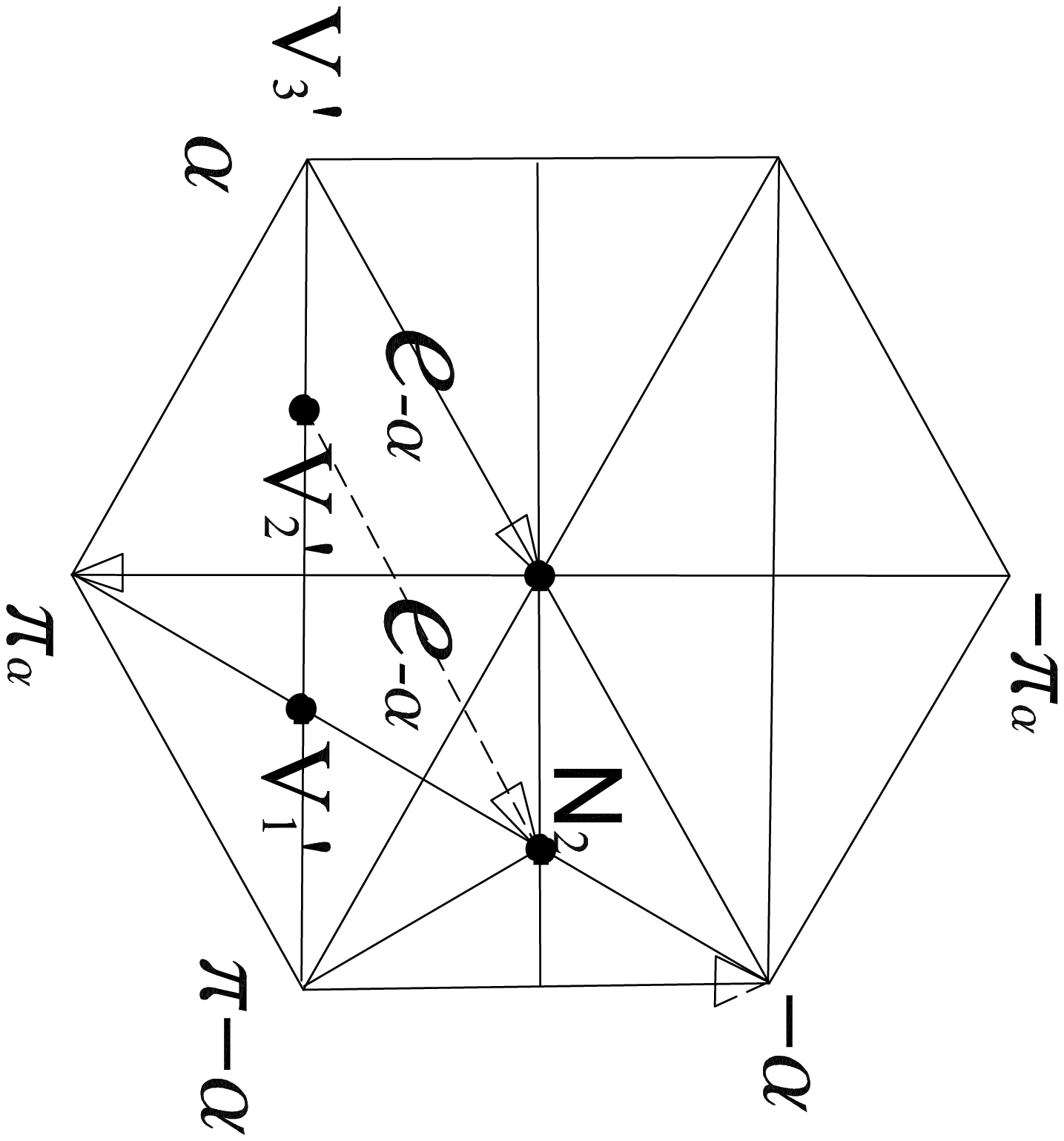,height=5cm,angle=90,clip=}

fig.2 a) \ b)
\end{center}
{\vspace {1ex}} The module $\mathcal N$ consists of the weights of type
$s_\alpha\pi_\alpha-\gamma=s_\alpha\pi_\alpha-m\alpha-k\beta-\sum \limits_{
\theta\in \Pi^{''}} a_\theta\theta$ where $m,k>0$ and
$a_\theta\geqslant 0$. Our aim to define the values of $m,k$ and to find decomposition of
 $\mathcal N$ in the irreducible $G^{''}$-submodules.
 We shall need the decomposition  of $V_1$ in the sum of simple $G^{''}$-modules:
$$V_1=\Bbb Kv(\pi_\alpha-\alpha)\oplus V^{'}_1\oplus V^{'}_2\oplus V^{'}_3.$$

On the figure 2b) we have the orthogonal projection of the weight polytop on the plane
generated by $\alpha$ and $\pi_\alpha$
(since $\pi_\beta^{'}=\pi_\alpha/2-\alpha$, $\supp(V^{'}_i)$ projects in the point).

\begin{proposition} Let  $\gamma\in \Delta_{\pp_u}$ be a positive root
such that $\pi_\alpha-\alpha+\gamma \in \supp(\mathcal
N)$. Consider the decomposition  $\gamma=m\alpha+k\beta+\sum_{\theta \in
\Pi^{''}} a_\theta\theta$ of the root $\gamma$ in the sum of simple roots,
 where $a_\theta\geqslant 0, \ m,k>0$.
  Then we have the following possibilities for the coefficients
  $m,k$:
\begin{itemize}
 \item[$\bullet$] The vector with the weights $\pi_\alpha-\alpha+\gamma$ with $m=1,k=2$,
 belong to $V_2$ and
 form a simple $G^{''}$-module isomorphic to $V^{'}_2$. In this case we have an equality
  $(\alpha;\gamma)=0$.
 \item[$\bullet$] Let $m=1,k=3$.
 This condition is satisfied by the unique vector $h_{\pi_\alpha-\alpha}$ with zero weight.
 There is an isomorphism of  $G^{''}$-modules $\mathcal N \cap
V_2\cong \Bbb Kh_{\pi_\alpha-\alpha}\oplus V^{'}_2$.
 \item[$\bullet$]  Let $m=2,k=3$.
 This condition is satisfied by a unique vector $v(-\alpha)\in V_3$.
\end{itemize}

The projective space  $\Bbb P(\mathcal N \cap V_2)$
 is isomorphic as $P^{'}$-variety to the projectivization of the fiber of normal bundle
 to the flag variety $G^{'}/P^{'}\subset \Bbb P(V_1)$  in the point $\langle
v(\pi_\alpha-\alpha) \rangle$.
\end{proposition}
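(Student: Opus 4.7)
The plan is to parallel the argument of Lemma \ref{cotang}, adapting it to the two extra features of the adjoint $E_8$ case: the zero weight of $V(\pi_\alpha)$ appears (so $\chi=\pi_\alpha-\alpha-\gamma$ may vanish, contributing a Cartan element to $\mathcal N$) and $V_{\geqslant 3}$ is nontrivial (allowing the value $m=2$).

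I would first enumerate the admissible pairs $(m,k)$. For $\chi=\pi_\alpha-\alpha-\gamma\in\supp(\mathcal N)$ with $\gamma=m\alpha+k\beta+\sum a_\theta\theta\in\Delta_{\pp_u}$, applying $s_\alpha$ yields $s_\alpha\chi=\pi_\alpha-s_\alpha\gamma$, which must again be a weight of $V(\pi_\alpha)$. Computing $s_\alpha\gamma=k\beta+(k-m)\alpha+\sum a_\theta\theta$, the requirement that its coefficients have consistent sign (so that $s_\alpha\gamma$ is a positive root or zero) forces $k\geqslant m$. Using that $\pi_\alpha$ is the highest root of $E_8$ with $\alpha$-coefficient $2$ and $\beta$-coefficient $3$, the coefficient of $\alpha$ in any root of $\Delta_{\pp_u}$ lies in $\{1,2\}$, while the maximum coefficient of $\beta$ in a root of $\Delta^{'}=E_7$ equals $1$. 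A short case analysis with these bounds eliminates every pair except $(m,k)\in\{(1,2),(1,3),(2,3)\}$; moreover, $(1,3)$ forces $\gamma=\pi_\alpha-\alpha$ and $\chi=0$, while $(2,3)$ forces $\gamma=\pi_\alpha$ and $\chi=-\alpha$. The orthogonality $(\alpha;\gamma)=0$ in the case $(m,k)=(1,2)$ is obtained by the same computation as in Lemma \ref{cotang}.

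The $G''$-module structure of $\mathcal N\cap V_2$ then follows immediately. For $(m,k)=(1,3)$ the vector $e_{-(\pi_\alpha-\alpha)}v_{\pi_\alpha-\alpha}=[e_{-(\pi_\alpha-\alpha)},e_{\pi_\alpha-\alpha}]$ is a nonzero multiple of $h_{\pi_\alpha-\alpha}\in\hh$, giving a trivial one-dimensional summand. For $(m,k)=(1,2)$, the element $n_{s_\alpha}$ maps the corresponding weight vectors $G''$-equivariantly onto the weight support of $V^{'}_2\subset V_1$, and the argument of Lemma \ref{cotang} identifies this part of $\mathcal N\cap V_2$ with $V^{'}_2$ as a simple $G''$-module. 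Hence $\mathcal N\cap V_2\cong\Bbb K h_{\pi_\alpha-\alpha}\oplus V^{'}_2$ as $G''$-modules.

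For the last assertion I would invoke Lemma \ref{V_1=p^u} to identify the tangent space to $G^{'}/P^{'}$ at $\langle v(\pi_\alpha-\alpha)\rangle$ inside $V_1/\Bbb K v(\pi_\alpha-\alpha)$ with the image of $\pp^{'-}_u v(\pi_\alpha-\alpha)=V^{'}_1$, so that the fiber of the normal bundle is the $P^{'}$-quotient $V_1/(\Bbb K v(\pi_\alpha-\alpha)\oplus V^{'}_1)$, which as an $L^{'}$-module is $V^{'}_2\oplus V^{'}_3$ with $V^{'}_3$ the lowest $\pi_\beta^{'}$-graded piece of $V_1$. The main obstacle will be matching the $P^{'}$-actions up to a character twist: the two $P^{'}$-modules $\mathcal N\cap V_2$ and the normal fiber agree on their semisimple $G''$-parts but differ by a one-dimensional character of $P^{'}$ (tracked through the $T$-weights on the two trivial $G''$-summands $\Bbb K h_{\pi_\alpha-\alpha}$ and $V^{'}_3$). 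This character is killed upon projectivization, yielding the desired $P^{'}$-equivariant isomorphism of $\Bbb P(\mathcal N\cap V_2)$ with the projectivization of the normal fiber.
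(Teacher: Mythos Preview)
Your enumeration of the pairs $(m,k)$ reaches the right list and follows the paper's strategy of applying $s_\alpha$ and reading off constraints on the coefficients. Two minor points: the inequality you extract is really $k-m\geqslant 1$ (not merely $k\geqslant m$), since $s_\alpha\gamma$ must lie in $\Delta_{\pp_u}$ or equal $\pi_\alpha$; and the remark about the $\beta$-coefficient in a root of $\Delta'=E_7$ is not what bounds $k$ --- the paper instead uses that the $\pi_\beta'$-grading on the miniscule $G'$-module $V_1$ runs only from $0$ to $3$, which directly gives $k\leqslant 3$.

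The real gap is in the final paragraph. You correctly identify the normal fiber as $V_1/(\Bbb K v(\pi_\alpha-\alpha)\oplus V'_1)\cong V'_2\oplus V'_3$ and you assert that this $P'$-module and $\mathcal N\cap V_2$ ``agree on their semisimple $G''$-parts but differ by a one-dimensional character of $P'$.'' That assertion is exactly the content of the paper's Proposition~\ref{cotangE_8}, and it is not automatic: the two modules have the same $G''$-decomposition, but one must still show that the $P'_u$-actions coincide. On the normal fiber $P'_u$ acts nontrivially (it moves $V'_3$ into $V'_2$), so this is a genuine verification. The paper produces an explicit $G''$-equivariant map $e_{-\alpha}:\widetilde V\to \mathcal N\cap V_2$ (up to a shift by a $G''$-fixed vector $v_0$ annihilated by $e_\beta$) and checks $e_\beta$-equivariance using $[e_\beta,e_{-\alpha}]=0$; this is what yields the isomorphism of $G''\ltimes P'_u$-modules. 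Only after that is established does the remaining discrepancy reduce to a central character of $L'$ (coming from $\supp(\mathcal N\cap V_2)=\supp(V'_2\oplus V'_3)+\alpha$), which indeed disappears on projectivization. Your plan names the obstacle but omits the mechanism that overcomes it.
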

\begin{proof} Let us represent the weights of the module $\mathcal N$ in the form
$s_\alpha\pi_\alpha-\gamma=s_\alpha\pi-m\alpha-k\beta-\sum
\limits_{ \theta\in \Pi^{''}} a_\theta\theta$ for $m,k>0$.
We apply to these weights the reflection $s_\alpha$. The weight
$s_\alpha\pi_\alpha$ maps to the $\pi_\alpha$. Let us notice that the weights
 $\pi-s_\alpha\gamma$ belong to the weight polytop. Since
 $s_\alpha\gamma$ is a root by Lemma \ref{V_1=p^u} we get $s_\alpha\gamma\in \pp_u$, that is equivalent
to $s_\alpha\pi_\alpha-\gamma \in V_1\oplus \Bbb
Ke_{-\pi_\alpha}v(\pi_\alpha)$. In the adjoint representation we have:
$e_{-\pi_\alpha}v(\pi_\alpha)=[e_{-\pi_\alpha};e_{\pi_\alpha}]=h_{\pi_\alpha}\in
\hh$.

We have the equality
$$s_\alpha\gamma=-m\alpha+k\alpha+k\beta+\sum \limits_{ \theta\in
\Pi^{''}} a_\theta\theta.$$

Let us notice that $k\leqslant 3$ since the miniscule representation
$G^{'}:V_1$ has the grading not greater than $3$ with respect to the pairing with
 $\pi_\beta^{'}$ (the pairing with $\pi_\beta^{'}$ is not less than $\langle
\pi_\beta^{'};\pi_\beta^{'}\rangle-3$). Let us describe the vectors $\gamma$ and the weights of
the module  $\mathcal N$. Consider the following cases:

 Let $\pi_\alpha-s_\alpha\gamma=0$ ($k-m=2$). From this we get that the weight
 $(\pi_\alpha-\alpha)-\gamma$ is equal to zero.  The corresponding vector is equal to
 $$e_{\alpha-\pi_\alpha}(e_{-\alpha}v(\pi_\alpha))=e_{\alpha-\pi_\alpha}v(\pi_\alpha-\alpha)
 =[e_{\alpha-\pi_\alpha};e_{\pi_\alpha-\alpha}]=-h_{\pi_\alpha-\alpha}.$$

  Let $\pi_\alpha-s_\alpha\gamma\in V_1$ (in this case
 $m-k=1$). Then we can decompose the representation of the group $G^{'}$ on
 $V_1$ in the graded components with the grading  defined by the pairing with
  $\pi_\beta^{'}$:
 $$V_1=\Bbb Kv(\pi_\alpha-\alpha)\oplus V^{'}_1\oplus V^{'}_2\oplus V^{'}_3.$$

 Let $k=3, \ m=2$. Then the weight space in consideration belong to
$V^{'}_3$, that is one dimensional and spanned by the lowest weight vector
$v_\tau$ of the representation $G^{'}:V_1$. Since $G^{'}:V_1$
is self dual (the root system $E_7$ does not have  automorphisms that are not inner)
$\pi_\alpha-\alpha\equiv -\tau \mod \Bbb Q\pi$ thus we have $\tau=\alpha$. The corresponding weight vector is
$e_{-\pi_\alpha}v(\pi_\alpha-\alpha)=v(-\alpha)$.

If $k=2, \ m=1$ then $\pi_\alpha-s_\alpha \gamma \in V^{'}_2$. In this case the
element $s_\alpha$ gives an isomorphism of the simple $G^{''}$-module
$V_2^{'}$ and a submodule in $\mathcal N$. In this case we have an equality $(\gamma;\alpha)=0$.
That follows from
$(\gamma;\alpha)=(\alpha+2\beta+\sum \limits_{ \theta\in \Pi^{''}}
a_\theta\theta;\alpha)=(\alpha;\alpha)+2(\beta;\alpha)=0$.

Let us prove the following proposition.

\begin{proposition}\label{cotangE_8} The projectivizations of factor module $V_1/(V^{'}_1\oplus \Bbb Kv(\pi_\alpha-\alpha))$
and module $\mathcal N\cap V_2$ are isomorphic as  $P^{'}$-varieties.
The first module is identified with the fiber of normal bundle to the flag variety $G^{'}/P^{'}\subset \Bbb P(V_1)$ in the point
$\langle v(\pi_\alpha-\alpha)\rangle$.
\end{proposition}
\begin{remark} The modules $\mathcal N \cap V_2$ and $V_1/(V^{'}_1\oplus \Bbb Kv(\pi_\alpha-\alpha))$
are isomorphic as $G^{''}\ltimes P^{'}_u$ modules but not isomorphic as
$P^{'}$-modules.
\end{remark}
\begin{proof} First let us notice that the module $V_1/(V^{'}_1\oplus \Bbb Kv(\pi_\alpha-\alpha))$
 can be identified with the following  $L^{'}$ module $\widetilde{V}=V_3^{'}\oplus V_2^{'}$
with the following action of $e_\beta$ ($P^{'}$ is generated by $\exp(e_\beta)$ and
 $L^{'}$):

\begin{itemize}
 \item[$\bullet$] On the component  $V_2^{'}$ the action of $e_\beta$ is trivial.
 \item[$\bullet$] On $V_3^{'}$ the action of $e_\beta$ comes from the action on $V_1$.
\end{itemize}
\begin{remark}To prove that $P^{'}$ is generated by $\exp(e_\beta)$ and
 $L^{'}$ it is sufficient to notice that the linear span of the elements
 $L^{'}e_\beta$ generates the Lie algebra $\pp^{'}_u$. The last assertion
 is due to the fact that on the unipotent radical  $\pp^{'}_u$ the action of  $L^{'}$
 is irreducible.
\end{remark}

From the above we have a decomposition $\mathcal N\cap
V_2:=\Bbb Kh_{\pi_\alpha-\alpha}\oplus \mathcal N_2$  as the sum of irreducible
 $G^{''}$-modules; where the module  $\mathcal N_2$
is generated by the vectors $e_{\gamma}v(\pi_\alpha-\alpha)$ for the roots
$\gamma \in \pp_u$ such that $e_{s_\alpha\gamma} v(\pi_\alpha)
\in V_2^{'}$.

Our aim is to show that the element $e_{-\alpha}$ maps
$\widetilde{V}$ in the module $\Bbb K(h_{\pi_\alpha-\alpha}+v_0)\oplus
\mathcal N_2$, where $v_0$  is some vector of weight zero. We shall show that
$e_\beta v_0=0$ that will imply that
$G^{''}\ltimes P^{'}_u$-module $\Bbb
K(h_{\pi_\alpha-\alpha}+v_0)\oplus \mathcal N_2$ is isomorphic to
$\mathcal N\cap V_2$.

Let us prove that $e_{-\alpha}$ maps the $G^{''}$-module $V^{'}_2$ in
 $\mathcal N_2$. Indeed the weights of  $V^{'}_2$ are of the form
$\pi_\alpha-(\alpha+2\beta+\sum \limits_{ \theta\in \Pi^{''}}
a_\theta\theta)$ and the map into the set of weights of $\mathcal N_2$
by the action of $s_\alpha$. The weights from the latter module can be written in the form
$\pi_\alpha-(2\beta+\sum \limits_{ \theta\in \Pi^{''}}
a_\theta\theta)$. This implies that they are obtained from the weights of
$V^{'}_2$ by subtracting $\alpha$. Let us notice that the action $e_{-\alpha}$
commutes with $G^{''}$ (the root  $\alpha$ is orthogonal to the roots from
$\Delta^{''}$ corresponding to the group $G^{''}$). Since $G^{''}$-module
 $V^{'}_2$ is simple,  for the proof of the isomorphism it is sufficient to check that
  $e_{-\alpha}V^{'}_2\neq 0$.
Let us choose a weight vector $v_\chi \in V^{'}_2$. The weights $\chi$ and
$\chi-\alpha$ are the ends of the edge of a weight polytop
(see Corrolary  \ref{wtp_u}), that implies that the vector $e_{-\alpha}v_\chi$ is nonzero.

Next let us notice that $e_{-\alpha}$ maps the weight vector from
$V^{'}_3$ to the vector $e_{-\alpha}v_\alpha=h_\alpha$. We have
$[e_\beta;h_\alpha]=(\beta;\alpha)e_\beta=-(\beta;\pi_\alpha-\alpha)e_\beta=
-[e_\beta;h_{\pi_\alpha-\alpha}]$ since $(\beta;\pi_\alpha)=0$. Thus we obtain that
$e_{-\alpha}v_\alpha=h_\alpha=-h_{\pi_\alpha-\alpha}+v_0$
for some vector $v_0$, annihilated by $e_{\beta}$.

At last we are left to show that the morphism
$e_{\alpha}:\widetilde{V}\longrightarrow \Bbb
Ch_{\pi_\alpha-\alpha}\oplus \mathcal N_2$ is equivariant with respect to the action of
 $e_{\beta}$. Since $[e_{\beta};e_{-\alpha}]\in
\gg_{\beta-\alpha}=0$ ( this holds since the difference of the simple roots is not a root)
we get a chain of equalities:
$$e_{\beta} e_{-\alpha}V^{'}_2=e_{-\alpha} e_{\beta} V^{'}_2+[e_{\beta};e_{-\alpha}]V^{'}_2=e_{-\alpha} e_{\beta} V^{'}_2,$$
 that is required.

Let us notice that the element  $h_{\pi^{'}_\beta}$ and the algebra $\gg^{''}+
\pp^{'}_u$ generate $\pp^{'}$. Thus for the proof it is sufficient to check that
 $h_{\pi^{'}_\beta}$
act in the same manner on $\Bbb P(\mathcal N \cap V_2)$ and $\Bbb
P(V_1/(V^{'}_1\oplus \Bbb Kv(\pi_\alpha-\alpha)))$. But this is a corollary of the equality
 $$\supp(\mathcal N \cap
V_2)=\supp(V_1/(V^{'}_1\oplus \Bbb Kv(\pi_\alpha-\alpha)))~+~\alpha.$$

 \end{proof}

This proof of the lemma finishes the proof of the proposition.\end{proof}

\vspace{1ex}

{\bf Let us introduce some additional notation.}

\vspace{1ex}

Consider the set of stable points $(G/P)^s_{\varepsilon}$
for the action of $\lambda$ (corr. $T$)
With respect to the very ample bundle $\mathcal M=\mathcal
O(2)|_{G/P}\otimes k_{-\pi_{\alpha}}$. As a sheaf it is isomorphic to
$\mathcal O(2)|_{G/P}$ but the action of $\lambda$  (corr. $T$)
is twisted by a character $-\pi_{\alpha}$.
After such linearization we cam assume that $\lambda$ is acting on the component
 $V_i$ with the weight $t^{3-2i}$; and the weights of the subspace $\langle v_{\pi_\alpha}\rangle \oplus V_1$
are positive and the weights of $V_{\geqslant2}$ are negative.

%„тобы сформулировать теорему \ref{E_8factor} нам понадобитс€  следующа€ естественна€ конструкци€.

\vspace{3ex}

{\bf Weighted blow up and weighted projective space.}

\vspace{1ex}

Here we recall the definition of the weighted blow up adapted to our situation.
Let $\Bbb P(V)\subset \Bbb P(\Bbb Kw\oplus V)$ be a natural embedding of a projective space as a subspace $w=0$.
Let us define on $\Bbb P(\Bbb Kw\oplus V)$ the involution $\imath$ acting by the formula
 $\imath(w)=-w$ and $\imath(v)=v$ for $v\in V$.
Consider the quotient $\Bbb P(\Bbb Kw\oplus V)/\imath$ of the subspace
$\Bbb P(\Bbb Kw\oplus V)$  by the action of involution $\imath$.
This quotient is the weighted projective space with the weights $(2,1,\ldots,1)$ (cf. \cite{dolgWeight}).
The quotient map  $$p_\imath:\Bbb P(Kw\oplus V)\rightarrow \Bbb P_{wt}(Kw\oplus V)$$ can be written down in homogenous
coordinates as $$p_\imath: (z_0:z_1:...:z_n)\longrightarrow (z_0^2:z_1:...:z_n).$$
 On the variety  $\Bbb P(Kw\oplus V)$ the ramification locus of $p$ that is equal to the set of fixed points
 with respect to the involution $\imath$ consists of the divisor
   $\Bbb P(V)=(0:z_1:\ldots:z_n)$ and of the isolated  ramification point $(1:0:\ldots:0)$. The points of the divisor
  $p((0:z_1:\ldots:z_n))\subset \Bbb P_{wt}(Kw\oplus V)$ are smooth points of $\Bbb P_{wt}(Kw\oplus V)$,
  and a  point $p((1:0:\ldots:0))$ is an isolated singular point.  Let us note that the
  weighted projective space
   $\Bbb P(\Bbb Kw\oplus V)/\imath$ is the cone over quadratic Veronese map of the projective space $\Bbb P(V)$
    (cf. \cite{dolgWeight}). This map can be written in the weighted homogeneous coordinates as
  $$\Bbb P(Kw\oplus V)\rightarrow\Bbb P_{wt}(Kw\oplus V)\rightarrow \Bbb P(Kw\oplus V^{\otimes 2})$$
  $$(z_0:z_1:...:z_n)\rightarrow (z^2_0:z_1:...:z_n)\rightarrow (z^2_0:z_1^2:\ldots:z_jz_i:\ldots)_{0<j\leq i}$$

Consider $V=V_0\oplus V_1$ where $(z_1,\ldots,z_k)$ are the
coordinates in $V_0$ and  $(z_{k+1},\ldots,z_n)$   $V_1$. Then we
can consider a weighted blow up of $\Bbb Kw\oplus V$ in the subspace
 $V_1$ with the weight $2$ on the subspace $\Bbb Kw$ and weight
 $1$ on the subspace $ V_0$. It can be described as a subvariety of
  $V\times \Bbb P_{wt}(\Bbb Kv\oplus V_1)$ (where
$P_{wt}(\Bbb Kv\oplus V_0)$ is the weighted projective space with the weights
$(2,1,\ldots,1)$ and the coordinates
$(\xi_0:\xi_1:\ldots:\xi_k)$) by the following system of equations:
  $$ \left \{ \begin
{array}{cl}
 z_i\xi_j=z_j\xi_i    \\
 z_0\xi_j^2=z_j^2\xi_0     \\
\end{array}
\right.$$

On the invariant language it is a weighted $\proj_{wt}(\bigoplus \mathcal J^n)$ where $\mathcal J=(z_0,z_1,\ldots,z_k)$ is the
 ideal defining $V_1$. Here we assume
 that the coordinate  $z_0$ has weight $2$, and the latter coordinates have weight $1$.

\vspace{2ex}

Our main task is to study the quotient $\lambda \backslash \!\!\backslash(G/P)^{ss}_\varepsilon$
for the case of the simple group of type $E_8$.
The latter set probably  has more complicated  description than the corresponding set from the Proposition
\ref{lambda factor}, but for construction of the embedding
of $\mathcal T$ it will be sufficient to apply the results about this set
obtained below.

%\begin{theorem}
%ѕусть $Bl(\Bbb P(\Bbb Kw\oplus V(\pi_\alpha)), G^{'}/P^{'})$ раздутие
%$\Bbb P(\Bbb Kw\oplus V(\pi_\alpha))$ в многообразии флагов $G/P$, а $\widetilde{\imath}$
%определенна€ выше инволюци€.
%‘актор $\widetilde{X}$ подмножества $(G/P)^s_{\varepsilon} \subset \Bbb
%P(V(\pi_\alpha))$ по однопараметрической подгруппе $\lambda$
%отождествл€етс€ с фактором раздути€  $Bl(\Bbb P(\Bbb Kw\oplus V(\pi_\alpha)), G^{'}/P^{'})$
%по инволюции $\widetilde{\imath}$.
%\end{theorem}

 The arguments repeat in general the proof of Theorem \ref{lambda factor}.
  We shall give their modifications.

\begin{proposition}\label{E_8factor}
Let $\Bbb P_{wt}:=\Bbb P_{wt}(\langle
e_{-\pi_\alpha}v_{\pi_\alpha}\rangle \oplus V_1)$ be a weighted projective
space with the weights
$(2,\underbrace{1,\ldots,1}_{\dim V_1})$ obtained as a  quotient of the vector space
 $\langle e_{-\pi_\alpha}v_{\pi_\alpha}\rangle \oplus
V_1$ by the action of one-parameter subgoup $\lambda$. The flag variety
$G^{'}/P^{'}$ embedes in $\Bbb P_{wt}$ asa composition of natural incusions $G^{'}/P^{'}\subset \Bbb P(V_1)\subset \Bbb P_{wt}$.

The quotient  $\lambda\backslash \!\! \backslash(P^-_uP/P)^s_{\varepsilon}$ is isomorphic to
$\Bbb P_{wt}\setminus (G^{'}/P^{'})$ the subset of $\Bbb P_{wt}$ with deleted flag variety $G^{'}/P^{'}$.
\end{proposition}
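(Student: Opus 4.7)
I would mimic Step~1 of the proof of Theorem~\ref{lambda factor}, with the modifications forced by the $E_8$ case. The genuine new feature is that $\pi_\alpha$ is itself a root of $\Delta$, so $\pi_\alpha\in\Delta_{\pp_u}$ contributes an extra coordinate $c_{\pi_\alpha}$ of $\lambda$-weight $-2$ in the exponential parametrization of the open cell $P^-_u P/P$; this is what produces the weighted projective space.

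Concretely, write each point of $P^-_u P/P$ uniquely as $\langle\exp(X)v_{\pi_\alpha}\rangle$ with $X=\sum_{\gamma\in\Delta_{\pp_u}}c_\gamma e_{-\gamma}\in\pp^-_u$. Since $\Ad(\lambda(t))e_{-\gamma}=t^{-\langle\pi_\alpha,\gamma\rangle}e_{-\gamma}$ and, in $E_8$, $\langle\pi_\alpha,\gamma\rangle\in\{1,2\}$ on $\Delta_{\pp_u}$ with the value $2$ attained only at $\gamma=\pi_\alpha$, the $\lambda$-action rescales $(c_{\pi_\alpha};c_\gamma)_{\gamma\neq\pi_\alpha}$ by $(t^{-2};t^{-1},\ldots,t^{-1})$. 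Combining this with the $G^{'}$-equivariant isomorphism $\pp^-_u\simeq\langle v_0\rangle\oplus V_1$ of Lemma~\ref{V_1=p^u} (sending $e_{-\pi_\alpha}\mapsto v_0=h_{\pi_\alpha}$ and $e_{-\gamma}\mapsto e_{-\gamma}v_{\pi_\alpha}$ for $\gamma\neq\pi_\alpha$) identifies $(\pp^-_u\setminus\{0\})/\lambda$ with $\Bbb P_{wt}(\langle v_0\rangle\oplus V_1)$ and yields a $\lambda$-invariant morphism
\[
p:(P^-_u P/P)\setminus\{\langle v_{\pi_\alpha}\rangle\}\longrightarrow \Bbb P_{wt}(\langle v_0\rangle\oplus V_1)
\]
which is bijective on $\lambda$-orbits.

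Next I would identify the stable locus. Under the linearization $\mathcal M$, $\lambda$ acts on $V_i$ with weight $3-2i\in\{3,1,-1,-3,-5\}$; since no weight $0$ occurs and $v_{\pi_\alpha}$ always supplies weight $3$, stability coincides with semistability and is equivalent to the $V_{\geqslant 2}$-component of $\exp(X)v_{\pi_\alpha}$ being nonzero. If $c_{\pi_\alpha}\neq 0$, the linear contribution $c_{\pi_\alpha}h_{\pi_\alpha}$ already provides a nonzero $V_2$-component, while if $c_{\pi_\alpha}=0$ the identity $X^{k}v_{\pi_\alpha}=X\cdot X^{k-1}v_{\pi_\alpha}$ reduces the full vanishing of $V_{\geqslant 2}$ to $X^{2}v_{\pi_\alpha}=0$, i.e.\ to the equations $p_\mu(c)=0$ of Remark~\ref{equation} cutting out the cone over $G^{'}/P^{'}\subset\Bbb P(V_1)$. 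Hence the unstable points in the open cell form exactly $\{c_{\pi_\alpha}=0\}\cap\{[c_\gamma]\in G^{'}/P^{'}\}$, whose $p$-image is $G^{'}/P^{'}\subset\Bbb P(V_1)\subset\Bbb P_{wt}$, and the restriction of $p$ to $(P^-_u P/P)^s_\varepsilon$ descends to the desired isomorphism with $\Bbb P_{wt}\setminus(G^{'}/P^{'})$.

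The principal obstacle is the $c_{\pi_\alpha}\neq 0$ step: one must rule out pathological cancellations in which the zero-weight quadratic part of $X^{2}v_{\pi_\alpha}$, together with higher-order contributions to $V_{3}$ and to $V_{4}=\langle v_{-\pi_\alpha}\rangle$, conspires to kill $c_{\pi_\alpha}h_{\pi_\alpha}$ and all root-weight components of $V_{\geqslant 2}$ simultaneously. The cleanest route is via Proposition~\ref{intersection}, which forces any such $\langle\exp(X)v_{\pi_\alpha}\rangle$ lying in $G/P\cap\Bbb P(V_0\oplus V_1)$ to belong to the cone over $G^{'}/P^{'}$ together with $\{\langle v_{\pi_\alpha}\rangle\}$; inspecting weight-zero components then compels $c_{\pi_\alpha}=0$.
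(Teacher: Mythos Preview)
Your overall plan mirrors the paper's: parametrize the open cell exponentially, read off the $\lambda$-weights $(-2,-1,\ldots,-1)$ on $(c_{\pi_\alpha};c_\gamma)$, obtain the map to $\Bbb P_{wt}$, and identify the unstable locus with $G'/P'\subset\Bbb P(V_1)\subset\Bbb P_{wt}$. The paper does the same, except that it writes the weight-$2$ coordinate of the map to $\Bbb P_{wt}$ as the Killing-form pairing $(v,h_{\pi_\alpha})$ rather than $c_{\pi_\alpha}$; a preliminary computation there shows these coincide.

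The gap is exactly where you flag it. You assert that $c_{\pi_\alpha}\neq 0$ forces a nonzero $V_2$-component, then acknowledge cancellations must be excluded and invoke Proposition~\ref{intersection}. But that proposition only computes $G/P\cap\Bbb P(V_1)$, not $G/P\cap\Bbb P(\langle v_{\pi_\alpha}\rangle\oplus V_1)$, and ``inspecting weight-zero components'' does not by itself give $c_{\pi_\alpha}=0$: the zero-weight part of $\exp(X)v_{\pi_\alpha}$ is
\[
v_0=c_{\pi_\alpha}h_{\pi_\alpha}+\tfrac12\sum_{\gamma\in\Delta_{\pp_u}\setminus\{\pi_\alpha\}}c_\gamma c_{\pi_\alpha-\gamma}\,(e_{-\gamma}e_{\gamma-\pi_\alpha}+e_{\gamma-\pi_\alpha}e_{-\gamma})v_{\pi_\alpha},
\]
and you must separate the two contributions. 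The paper's device is a one-line Killing-form computation: each quadratic summand lies in the $\mathfrak{sl}_3$ generated by $\{\gamma,\pi_\alpha-\gamma,\pi_\alpha\}$ and is proportional to $h_{\pi_\alpha-2\gamma}$; since $(\pi_\alpha-2\gamma,\pi_\alpha)=0$ (because $(\gamma,\pi_\alpha)=1$ for $\gamma\neq\pi_\alpha$), the quadratic part is orthogonal to $h_{\pi_\alpha}$. Hence the orthogonal projection of $v_0$ onto $\Bbb Kh_{\pi_\alpha}$ is $c_{\pi_\alpha}h_{\pi_\alpha}$, and $v_0=0$ forces $c_{\pi_\alpha}=0$. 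This is the missing ingredient.

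If you want to salvage your route without the Killing form, there is a clean alternative. In the $G'$-module splitting $\pp_u^-\cong V_1\oplus\Bbb K e_{-\pi_\alpha}$ of Lemma~\ref{V_1=p^u}, the line $\Bbb K e_{-\pi_\alpha}$ is $G'$-fixed, so under $g\cdot\exp(X)v_{\pi_\alpha}=\exp(\Ad(g)X)v_{\pi_\alpha}$ the coordinate $c_{\pi_\alpha}$ is $G'$-invariant. Given an unstable $\langle v\rangle$, Lemma~\ref{unst} gives $\langle v_1\rangle\in G'/P'$; act by $G'$ to arrange $v_1=v_{\pi_\alpha-\alpha}$, i.e.\ $c_\gamma=0$ for $\gamma\neq\alpha,\pi_\alpha$. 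Then $v=\exp(c_{\pi_\alpha}e_{-\pi_\alpha}+e_{-\alpha})v_{\pi_\alpha}$, whose $V_2$-component is exactly $c_{\pi_\alpha}h_{\pi_\alpha}$ (since $e_{-\alpha}^2v_{\pi_\alpha}=0$ and the cross term $e_{-\pi_\alpha}e_{-\alpha}v_{\pi_\alpha}$ lands in $V_3$), whence $c_{\pi_\alpha}=0$.
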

\begin{proof}
 Consider the orbit $\langle\lambda(t)v\rangle \subset P^-_uP/P$. %, принадлежащую открытой клетке $P^-_uP/P$.
 Let us represent the vector  $v$ as the exponential map of the Lie algebra
$\pp^{-}_u$ applied to the vector $v_{\pi_\alpha}$:

$$v=\exp(\sum \limits_{\gamma \in \Delta_{P_u}}c_\gamma e_{-\gamma})v_{\pi_\alpha}=v_{\pi_\alpha} +
\underbrace{\frac{\sum_{\gamma \in \Delta_{P_u}\setminus
\{\pi_\alpha\}}c_\gamma e_{-\gamma}}{1!}v_{\pi_\alpha}}_{\in
V_1}+$$
$$+\underbrace{c_{\pi_\alpha}e_{-\pi_\alpha}v_{\pi_\alpha}+\frac{(\sum_{\gamma
\in \Delta_{P_u}\setminus \{\pi_\alpha\}} c_\gamma
e_{-\gamma})^2}{2!}v_{\pi_\alpha}}_{\in V_2}+\ldots.$$

(In other words we consider the embedding of the open cell
$P^-_uP/P$ in the space $\Bbb P (V(\pi_\alpha))$ as an orbit of the point $\langle v_{\pi_\alpha}\rangle$.)

 Consider the expression for the component  $v_0$ of the weight zero for the vector
$v$.

$$v_0=c_{\pi_\alpha}e_{-\pi_\alpha}v_{\pi_\alpha}+ \frac{1}{2}\sum
\limits_{\gamma\in \Delta_{P_u}\setminus \{\pi_\alpha\}}
c_{\gamma}c_{\pi_{\alpha}-\gamma}
(e_{-\gamma}e_{-\pi_{\alpha}+\gamma}+e_{-\pi_{\alpha}+\gamma}e_{-\gamma})v_{\pi_\alpha}$$

Using the inclusion $\mathfrak {sl}_3 \subset \gg$ defined by the positive roots
$\{\gamma,\pi_{\alpha}-\gamma,\pi_{\alpha}\}$ we can show by the explicit calculations for $\mathfrak {sl}_3$ that
the expession $(e_{-\gamma}e_{-\pi_{\alpha}+\gamma}+e_{-\pi_{\alpha}+\gamma}e_{-\gamma})v_{\pi_\alpha}$
is proportional to
$(h_{-\gamma}-h_{-\pi_{\alpha}+\gamma})=h_{\pi_{\alpha}-2\gamma}.$
Since $(\pi_{\alpha}-2\gamma;\pi_\alpha)=0$ we get that when  $c_{\pi_\alpha}=0$
the vector  $v_0$ satisfies the equality
$(v_0;h_{\pi_\alpha})=0$. That implies that orthogonal  (with respect to Cartan-Killing form)
projection of  $v$ to the component
$h_{\pi_\alpha}\in V_2$ is equal
$c_{\pi_\alpha}e_{-\pi_\alpha}v_{\pi_\alpha}$.

Let us notice that
$$\langle \lambda(t)v\rangle=\langle v_{\pi_\alpha} +
{\sum \limits_{\gamma \in \Delta_{P_u}\setminus
\{\pi_\alpha\}}tc_\gamma e_{-\gamma}}v_{\pi_\alpha}
+t^2c_{\pi_\alpha}e_{-\pi_\alpha}v_{\pi_\alpha}+\frac{1}{2!}{(\sum
\limits_{\gamma \in \Delta_{P_u}\setminus \{\pi_\alpha\}}
tc_\gamma e_{-\gamma})^2}v_{\pi_\alpha}+\ldots\rangle.$$

It is easy to see that the set of coefficients
$(t^2c_{\pi_\alpha},tc_{\gamma_1},\ldots,tc_{\gamma_{\dim V_1}})$,
where ${\gamma_i\in \Delta_{P_u}\setminus \{\pi_\alpha\}}$ defines
the orbit of  $\lambda(t)$ from the open cell.
Considering $\lambda$-equivariant  map $(P^-_uP/P)\setminus \langle v(\pi_\alpha) \rangle\rightarrow \Bbb P_{wt}$ :
$$\ \ \ \langle  v\rangle \rightarrow \left (\frac{(v,h_{\pi_\alpha})}{(v,v(-\pi_\alpha))}:
\frac{(v,v(-\pi_\alpha+\gamma_1))}{(v,v(-\pi_\alpha))}:\ldots:\frac{(v,v(-\pi_\alpha+\gamma_{\dim V_1}))}{(v,v(-\pi_\alpha))}\right)
$$
we get that the isomorphism of the considered space of orbits to the weighted
projective space
$\Bbb P_{wt}(\langle h_{\pi_\alpha} \rangle \oplus V_1)$.

\vspace{1ex}

It remains to settle which points of $\Bbb P_{wt}$ correspond to the unstable orbits and should be excluded.
The orbit $\lambda\langle v\rangle$ is unstable only in the case when the component of vector $v$
belonging to $V_{\geqslant 2}$ is equal to zero. In particular the components
$c_{\pi_\alpha}e_{-\pi_\alpha}v_{\pi_\alpha}$ и ${(\sum
\limits_{\gamma \in \Delta_{P_u}\setminus \{\pi_\alpha\}} c_\gamma
e_{-\gamma})^2}v_{\pi_\alpha}$ that are orthogonal with
respect to the Cartan-Killing form should be zero as well.
  Since $v\in \Bbb K v_{\pi_\alpha}\oplus V_1$,
 By Lemma \ref{unst} we get
 $\lim_{t\rightarrow 0}\lambda(t)\langle v\rangle=\langle v_1\rangle\in \Bbb P(V_1)\cap G/P=G^{'}/P^{'}$,
(where $v_1$ --- is the projection of $v$ on $V_1$).

Thus  for the inclusion $\langle v\rangle \notin (G/P)^{ss} $ it is nessesary that
 $c_{\pi_\alpha}=0$ and $\langle v_1\rangle\in G^{'}/P^{'}$.
This condition is also sufficient.
Let $v=\exp(e_{-\alpha})v_{\pi_\alpha}=v_{\pi_\alpha}+e_{-\alpha}v_{\pi_\alpha}$ it is easy to see that
 $\langle v\rangle \notin (G/P)^{ss}$. In the same time since $G^{'}$ and $\lambda$ commutes,
 the orbit $G^{'}\langle v\rangle$  is unstable. In this case the projection of  $G^{'}\langle v\rangle$ on $\Bbb P(V_1)$
is equal to $G^{'}\langle v_1\rangle\cong G^{'}/P^{'}$. Since the map $\lambda \backslash \!\!\backslash ((P^-_uP/P)\setminus \langle v_{\pi_\alpha} \rangle)\rightarrow \Bbb P_{wt}$ is an isomorphism we obtain that the unstable orbits
  project exactly to $G^{'}/P^{'}\subset \Bbb P_{wt}$ i.e.
   $\lambda \backslash \!\!\backslash (P^-_uP/P)^{ss}=\Bbb P_{wt}\setminus (G^{'}/P^{'})$.
\end{proof}

 Let us describe the set of  $\lambda$-orbits that do not belong to the open cell.
 Applying Lemma \ref{component_V_1} and \ref{intersection} we obtain that for the component
  $v_1\in V_1$ and vector $v$ we have the inclusion
 $\langle v_1 \rangle \in G^{'}/P^{'}$.   Acting by $G^{'}$
we can assume that $v_1=v_{\pi_\alpha}$ (since the action of $G^{'}$ commute with  $\lambda$).
 We also obtain that the image of orthogonal projection on  $\Bbb P(V_1)$ %отображении $p_0$
 of the set of $\lambda$-orbits that do not belong to the open cell is equal to
 $G^{'}/P^{'}$.

\begin{lemma} Consider the space of $\lambda$-orbits of the points  $\langle v \rangle$such that the projection of $v$
on the subspace $\langle v_{\pi_\alpha}\rangle \oplus V_1$
is proportional to $v_{\pi_\alpha-\alpha}$. This orbit space as a  $G^{''}$-variety is identified with the quotient of the module
 $\langle
e_{-\pi_\alpha}v(\pi_\alpha-\alpha)\rangle\oplus\mathcal N\cap
V_2$ by the one-parameter subgroup $\lambda$ where $\lambda$ is acting on the first component
with the weight $2$ and on $\mathcal N\cap V_2$ with the weight
$1$.
\end{lemma}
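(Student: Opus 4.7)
The plan is to parameterize the relevant orbits explicitly on the Schubert cell $B^{-}s_\alpha P/P$ by an exponential and read off both the identification with $\langle e_{-\pi_\alpha}v(\pi_\alpha-\alpha)\rangle\oplus(\mathcal N\cap V_2)$ and the $\lambda$-weights from the first-order terms.

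First I would apply Lemma \ref{BGG} with $w=s_\alpha$: the hypothesis that the projection of $v$ onto $\langle v_{\pi_\alpha}\rangle\oplus V_1$ is a nonzero multiple of $v(\pi_\alpha-\alpha)$ forces $\pi_\alpha-\alpha\in\supp(v)$ and places $\langle v\rangle$ in the Schubert cell $B^{-}s_\alpha P/P$. After rescaling the representative so that the $V_1$-projection is exactly $v(\pi_\alpha-\alpha)$, I obtain an exponential presentation
$$ v=\exp\Bigl(\sum_{\gamma\in\Gamma}c_\gamma e_{-\gamma}\Bigr)v(\pi_\alpha-\alpha), $$
where $\Gamma\subset\Delta_{\pp_u}$ consists of those $\gamma$ for which $e_{-\gamma}v(\pi_\alpha-\alpha)$ is nonzero and does not lie in $V_1$, so that the chosen normalization of the $V_1$-projection is preserved.

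Next I would invoke the preceding proposition on the structure of $\mathcal N$ in the $E_8$ case to split $\Gamma$: the roots $\gamma$ with $(\pi_\alpha;\gamma)=1$ (the cases $(m,k)=(1,2)$ and $(1,3)$) run through a basis of $\mathcal N\cap V_2$ via $c_\gamma\mapsto c_\gamma e_{-\gamma}v(\pi_\alpha-\alpha)$, while the unique $\gamma$ with $(m,k)=(2,3)$ is $\gamma=\pi_\alpha$, contributing $c_{\pi_\alpha}e_{-\pi_\alpha}v(\pi_\alpha-\alpha)=c_{\pi_\alpha}v(-\alpha)\in\langle e_{-\pi_\alpha}v(\pi_\alpha-\alpha)\rangle$. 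Since every $\gamma\in\Gamma$ satisfies $(\pi_\alpha;\gamma)\geq 1$, all higher-order terms of the exponential land in $V_{\geq 3}$ and are polynomial functions of the $c_\gamma$; they contribute no independent parameters. This identifies the normalized parameter space bijectively with $\langle e_{-\pi_\alpha}v(\pi_\alpha-\alpha)\rangle\oplus(\mathcal N\cap V_2)$. The identification is $G^{''}$-equivariant because $G^{''}$ commutes with $\lambda$, fixes $v(\pi_\alpha-\alpha)$ (as $(\pi_\alpha-\alpha;\theta)=0$ for every $\theta\in\Pi^{''}$), and acts trivially on $\langle v(-\alpha)\rangle$ (since $\alpha$ is orthogonal to $\Pi^{''}$), while acting on the $\mathcal N\cap V_2$-parameters through its adjoint action on $\pp^{-}_u$.

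Finally, a direct conjugation $\lambda(t)e_{-\gamma}\lambda(t)^{-1}=t^{-(\pi_\alpha;\gamma)}e_{-\gamma}$ combined with $\lambda(t)v(\pi_\alpha-\alpha)=t\cdot v(\pi_\alpha-\alpha)$ (using $(\pi_\alpha;\pi_\alpha-\alpha)=1$) gives
$$ \lambda(t)v=t\cdot\exp\Bigl(\sum_{\gamma\in\Gamma}t^{-(\pi_\alpha;\gamma)}c_\gamma e_{-\gamma}\Bigr)v(\pi_\alpha-\alpha), $$
so projective rescaling back to the normalized form replaces $c_\gamma$ by $t^{-(\pi_\alpha;\gamma)}c_\gamma$. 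After the substitution $s=t^{-1}$ this yields weight $1$ on the $\mathcal N\cap V_2$-coordinates and weight $2$ on $c_{\pi_\alpha}$, matching the claimed weighted $\lambda$-action. The main subtlety I anticipate is that the actual $V_3$-component of $v$ is $c_{\pi_\alpha}v(-\alpha)$ \emph{plus} quadratic corrections coming from $\tfrac{1}{2}(\sum c_\gamma e_{-\gamma})^2 v(\pi_\alpha-\alpha)$; however those corrections are already determined by the $\mathcal N\cap V_2$-parameters and, under the normalized $\lambda$-action, scale with exactly weight $2$, so they are compatible with the weighted identification and do not spoil either bijectivity or the $G^{''}$-equivariance.
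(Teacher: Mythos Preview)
Your proposal is correct and follows essentially the same route as the paper: parameterize the relevant Schubert cell by the exponential $\exp(\sum c_\gamma e_{-\gamma})v_{\pi_\alpha-\alpha}$, identify the coefficient space with $\langle e_{-\pi_\alpha}v(\pi_\alpha-\alpha)\rangle\oplus(\mathcal N\cap V_2)$ via the first-order terms, and read off the $\lambda$-weights $(2,1,\ldots,1)$ from the conjugation formula.

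The one point where you and the paper diverge is the treatment of the $v(-\alpha)$-component. You handle the potential quadratic contribution $\tfrac{1}{2}(\sum c_\gamma e_{-\gamma})^2 v_{\pi_\alpha-\alpha}$ by a triangularity argument: since it is a polynomial in the $\mathcal N\cap V_2$-coordinates and scales with weight $2$, it does not obstruct bijectivity or the weighted identification. The paper instead shows directly that this quadratic correction \emph{vanishes}: for $\gamma,\delta$ appearing in the sum one cannot have $\gamma+\delta=\pi_\alpha$, because pairing with $\pi_\beta'$ gives $(\gamma;\pi_\beta')=(\delta;\pi_\beta')\neq 0$ while $(\pi_\alpha;\pi_\beta')=0$. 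So in the paper $c_{\pi_\alpha}$ is literally the $v(-\alpha)$-coefficient of $v$, which makes the identification with the weighted projective space entirely explicit. Your abstract argument is perfectly sufficient for the lemma as stated, but the paper's vanishing computation is worth knowing since it is reused later in the explicit coordinate description of the weighted blow-up (Proposition~\ref{bl}).
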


\begin{proof}
Let us represent  $v$ as the exponential map from the element of the Lie algebra
 Ћи $\uu$), applied to $v_{\pi_\alpha-\alpha}$. We get that

$$v=v_{\pi_\alpha-\alpha} +
\sum c_\gamma
e_{-\gamma}v_{\pi_\alpha-\alpha}+c_{\pi_\alpha}e_{-\pi_\alpha}v_{\pi_\alpha-\alpha}+
\frac{1}{2!}(\sum c_\gamma
 e_{-\gamma})^2v_{\pi_\alpha-\alpha}+\ldots,$$
where both sums are taken over the roots $\gamma$ such that
$\pi_\alpha-\alpha+\gamma\in\mathcal N\cap V_2$.

Let us show that the component  $v_{-\alpha}$ of the vector $v$ is equal to $c_{\pi_\alpha}e_{-\pi_\alpha}v_{\pi_\alpha-\alpha}$.
Let us write down its value:

$$c_{\pi_\alpha}e_{-\pi_\alpha}v_{\pi_\alpha-\alpha}+
\frac{1}{2!}\sum_{\gamma +\delta=\pi_\alpha} c_\gamma c_\delta
 e_{-\gamma}e_{-\delta}v_{\pi_\alpha-\alpha},$$
where $\gamma, \delta \in \Delta_{\pp_u}$ are such that $\pi_\alpha-\gamma,\pi_\alpha-\delta\in \supp(V_2^{'})$.
But from the last inclusion we obtain that the equality $\gamma +\delta=\pi_\alpha$ is not possible
(Indeed the last equality contradicts the equalities $(\gamma-\pi_\alpha;\pi_\beta^{'})=(\delta-\pi_\alpha;\pi_\beta^{'})\neq 0$ and $(\pi_\alpha;\pi_\beta^{'})=0$).
Frow which we get  $v_{\alpha}=c_{\pi_\alpha}e_{-\pi_\alpha}v_{\pi_\alpha-\alpha}.$

 The set of coefficients
$(t^2c_{\pi_\alpha},tc_{\gamma_1},\ldots,tc_{\gamma_k})$, where
${\gamma_i\in \Delta_{P_u}\setminus \{\pi_\alpha\}}$ define the orbit uniquely.
Thus the set of orbits can be identified with the weighted projective space
 $\Bbb P_{wt}(\Bbb K e_{-\pi_\alpha}v(\pi_\alpha-\alpha)\oplus \mathcal N\cap V_2)$.
\end{proof}

\begin{remark}\label{pr}
For the points $\langle v\rangle \in (G/P)^{ss}_{\varepsilon}$ different from $\langle v_{\pi_\alpha}+\Bbb K h_{\pi_\alpha}\rangle$,
the orthogonal projection on the subspace $V_1$ is not equal to zero. This defines a  $\lambda$-equivariant
projection on $\Bbb P(V_1)$ as well as the map
$$
\sigma_0:\lambda\backslash \!\! \backslash((G/P)^{ss}_{\varepsilon}\setminus
 \langle v_{\pi_\alpha}+\Bbb K h_{\pi_\alpha}\rangle) \rightarrow \Bbb P(V_1)
$$
\end{remark}

By $D_{h_{\pi_\alpha}}$ we denote a  $G^{'}$-invariant hyperplane section in  $\Bbb P(V)$ defined
by the vanishing of the coordinate by the vector $h_{\pi_\alpha}$.
 From the Remark \ref{pr}it follows that we have well defined $\lambda$-equivariant  projection on $\Bbb P(V_1)$
  of the set $(G/P)^{ss}_{\varepsilon}\cap D_{h_{\pi_\alpha}}$ that defines a map
 $\sigma_0:\lambda\backslash \!\! \backslash((G/P)^{ss}_{\varepsilon}\cap D_{h_{\pi_\alpha}})\longrightarrow \Bbb P(V_1)$.
 Our next aim is to study the quotient  $\lambda\backslash \!\! \backslash((G/P)^{ss}_{\varepsilon}\cap D_{h_{\pi_\alpha}})$
in the neighbourhood  $\sigma_0^{-1}(z)$ where $z\in G^{'}/P^{'}\subset \Bbb P(V_1)$ is the sufficiently general point.

Let us choose in  $\Bbb P(V)$ the affine chart $\mathcal U_{\alpha}=\{\langle v\rangle \in \Bbb P(V) | v_{\pi_\alpha-\alpha}\neq 0\}$.
We recall that conormal bundle to $G^{'}/P^{'}\subset \Bbb P(V_1)$ is identified with
$G^{'}*_{P^{'}}\mathcal N$ where $\mathcal N\cong V_1/(\langle v_{\pi_\alpha-\alpha} \rangle\oplus V_1^{'})$
is a fiber over the point $v_{\pi_\alpha-\alpha}$. (The fiber  $\mathcal N$ has a structure of  $P^{'}$-module.)

Consider the restriction of the considered conormal bundle
on the open cell $P^{'-}_uP^{'}/P^{'}=G^{'}/P^{'}\cap \mathcal U_\alpha$.
On $P^{'-}_uP^{'}/P^{'}$ we have a transitive action of the group $P^{'-},$
and the stabilizer of the point $eP^{'}/P^{'}$ is equal to $L^{'}$ .
 This implies that tha restriction of the conormal bundle is isomorphic to $P^{'-}*_{L^{'}}\mathcal N|_{L^{'}}$,
where $\mathcal N|_{L^{'}}\cong V_2^{'}\oplus \langle v_\alpha \rangle$ is a  $P^{'}$-module $\mathcal N$ considered as an
$L^{'}$-module. In particular the last isomorphism claims the restriction of the conormal bundle on the open cell
$P^{'-}_uP^{'}/P^{'}$ is isomorphic to the direct sum of subbundles $P^{'-}*_{L^{'}}V_2^{'}$ and
   $P^{'-}*_{L^{'}}\langle v_\alpha \rangle$.

\begin{proposition}\label{bl}   The quotient  $\lambda\backslash \!\! \backslash((G/P)^{ss}_{\varepsilon}\cap D_{h_{\pi_\alpha}}\cap\mathcal U_{\alpha})$ is isomorphic to the weighted blow up of the variety
  $\Bbb P(V_1)\cap \mathcal U_{\alpha}$ in the subvariety $P^{'-}_uP^{'}/P^{'}$. The preimage
   of $P^{'-}_uP^{'}/P^{'}$ is isomorphic to the projectivization of the conormal bundle to  $P^{'-}_uP^{'}/P^{'}$ in $\Bbb P(V_1)\cap \mathcal U_{\alpha}$,
and the fibers of subbundle $P^{'-}*_{L^{'}}\langle v_\alpha \rangle$ have the weight $2$, and the fibers $P^{'-}*_{L^{'}}V_2^{'}$ have the weight $1$.
\end{proposition}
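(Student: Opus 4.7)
The plan is to exploit the $\lambda$-equivariant projection $\sigma_0$ of Remark \ref{pr}, restricted to $\mathcal U_\alpha\cap D_{h_{\pi_\alpha}}$, and to read off its structure fiberwise over $\Bbb P(V_1)\cap\mathcal U_\alpha$. Over the open complement of $P^{'-}_u P^{'}/P^{'}$ I expect $\sigma_0$ to be an isomorphism onto its image; over $P^{'-}_u P^{'}/P^{'}$ the fibers will turn out to be weighted projective spaces of the correct shape. The weighted blow-up structure of the whole quotient then follows by matching local equations.

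The argument that $\sigma_0$ is an isomorphism off the flag variety parallels Step 1 in the proof of Theorem \ref{lambda factor}, but with the weighted projective space of Proposition \ref{E_8factor} in place of the ordinary one. The condition $D_{h_{\pi_\alpha}}=0$ — which, by the computation of the zero-weight component in Proposition \ref{E_8factor}, amounts to $c_{\pi_\alpha}=0$ on the open cell — kills the weight-two direction of the ambient $\Bbb P_{wt}$, reducing it to $\Bbb P(V_1)\cap\mathcal U_\alpha$ off $G^{'}/P^{'}$. Over a point of $G^{'}/P^{'}\cap\mathcal U_\alpha$ I apply the lemma immediately preceding Remark \ref{pr}: at the base point $\langle v_{\pi_\alpha-\alpha}\rangle$ the fiber of the quotient is $\Bbb P_{wt}(\langle e_{-\pi_\alpha}v(\pi_\alpha-\alpha)\rangle\oplus\mathcal N\cap V_2)$ with weights $(2,1,\ldots,1)$, where $\mathcal N\cap V_2=\Bbb K h_{\pi_\alpha-\alpha}\oplus V_2^{'}$. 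Intersecting with $D_{h_{\pi_\alpha}}$ should excise exactly the line $\Bbb K h_{\pi_\alpha-\alpha}$: the remaining vectors in $V_2^{'}$ and the weight-two generator $v(-\alpha)$ lie in nonzero $T$-weight spaces and so contribute nothing to the $h_{\pi_\alpha}$-coordinate, while $h_{\pi_\alpha-\alpha}=h_{\pi_\alpha}-h_\alpha$ does. The fiber therefore becomes $\Bbb P_{wt}(\Bbb K v(-\alpha)\oplus V_2^{'})$ with weights $(2,1,\ldots,1)$.

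To globalize I propagate this description across $P^{'-}_u P^{'}/P^{'}$ using the $P^{'-}$-equivariance of $\sigma_0$ and the transitivity of $P^{'-}_u$; the fibers assemble into the homogeneous bundle $P^{'-}_u *_{L^{'}}\Bbb P_{wt}(V_3^{'}\oplus V_2^{'})$. Combined with Proposition \ref{cotangE_8} and the identification of the normal fiber of $G^{'}/P^{'}\subset\Bbb P(V_1)$ at $\langle v_{\pi_\alpha-\alpha}\rangle$ with $V_2^{'}\oplus V_3^{'}$, this yields the weighted projectivization of the conormal bundle asserted in the proposition, with $P^{'-}*_{L^{'}}\langle v_\alpha\rangle$ of weight $2$ and $P^{'-}*_{L^{'}}V_2^{'}$ of weight $1$.

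The final step is to upgrade this fiberwise picture to an identification of the total space with a weighted blow up in the sense recalled just before the proposition. The plan is to read off, in the exponential coordinates $\{c_\gamma\}$ on $\mathcal U_\alpha$, the $\lambda$-invariant equations obtained by descending the relations of Remark \ref{equation} together with the condition $D_{h_{\pi_\alpha}}=0$, and to match them against the weighted blow-up relations $z_i\xi_j=z_j\xi_i$, $z_0\xi_j^2=z_j^2\xi_0$ defining $\proj_{wt}(\bigoplus \mathcal J^n)$ for the ideal $\mathcal J$ of $P^{'-}_u P^{'}/P^{'}$. The main obstacle is precisely this comparison: the higher-order terms in the exponential expansion produce nontrivial relations coming from $V_{\geqslant 2}$, and checking that after imposing $D_{h_{\pi_\alpha}}$ the descended ideal agrees on the nose with the weighted blow-up ideal (with weight $2$ on the $v_\alpha$-direction and weight $1$ on the $V_2^{'}$-directions) will require a careful explicit calculation, rather than the clean Moishezon argument available in the non-$E_8$ case.
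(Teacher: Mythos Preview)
Your outline correctly identifies the two pieces of the quotient (the open part over $\Bbb P(V_1)\setminus G'/P'$ and the exceptional locus over $G'/P'$), and your fiber computation at the base point $\langle v_{\pi_\alpha-\alpha}\rangle$ is right: imposing $D_{h_{\pi_\alpha}}$ there amounts to setting $c_{\pi_\alpha-\alpha}=0$, leaving $\Bbb P_{wt}(\Bbb K v(-\alpha)\oplus V_2')$.  But you have explicitly deferred the only substantive step, and that step is essentially the entire proof.  Knowing the fibers of $\sigma_0$ does not determine the total space up to isomorphism; what is needed is precisely the ``careful explicit calculation'' you postpone.

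The paper does not follow your fiberwise-then-glue route.  It works directly in the chart $\mathcal U_\alpha$ via the exponential parametrization centered at $v_{\pi_\alpha-\alpha}$ (using $\Ad(s_\alpha)\pp_u^-$), with coordinates $b_\alpha,\{b_\delta\},\{c_\gamma\},c_{\pi_\alpha}$ corresponding to the $L'$-module decomposition $\langle v_{\pi_\alpha}\rangle\oplus V_1'\oplus s_\alpha V_2'\oplus\langle v_{-\alpha}\rangle$.  It then computes, term by term, the weight-$(\pi_\alpha-\gamma)$ components (for $\pi_\alpha-\gamma\in\supp V_2'$), the projection onto $h_{\pi_\alpha}$, and the weight-$\alpha$ component.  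The point of the divisor condition is not merely to excise the $h_{\pi_\alpha-\alpha}$ line in a single fiber: in $\mathcal U_\alpha$ the coordinate $x_{h_{\pi_\alpha}}$ is a genuine polynomial in the $b$'s and $c$'s, and setting it to zero kills an otherwise obstructing cross-term $x_{h_{\pi_\alpha}}b_\alpha$ in the $v_\alpha$-component.  What remains are the clean relations
\[
Q_\alpha \;=\; N_\alpha\, c_{\pi_\alpha}\, b_\alpha^{\,2},\qquad
P_{\pi_\alpha-\gamma} \;=\; N_{\pi_\alpha-\gamma}\, b_\alpha\, c_\gamma,
\]
where $Q_\alpha,\{P_{\pi_\alpha-\gamma}\}$ generate the ideal $\mathcal J$ of $P'^-_u P'/P'\subset\Bbb P(V_1)\cap\mathcal U_\alpha$.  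Eliminating $b_\alpha$ from these gives exactly the weighted blow-up equations $P_{\pi_\alpha-\gamma_i}c_{\gamma_j}=P_{\pi_\alpha-\gamma_j}c_{\gamma_i}$ and $Q_\alpha c_{\gamma_j}^2=P_{\pi_\alpha-\gamma_j}^2 c_{\pi_\alpha}$, identifying the quotient with $\proj_{wt}\bigl(\bigoplus_n\mathcal J^n\bigr)$ where $Q_\alpha$ carries weight $2$ and each $P_{\pi_\alpha-\gamma}$ weight $1$.  No analogue of the Moishezon contraction argument is available here, so this explicit matching is unavoidable.

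A minor slip: in your globalization you write $P^{\prime -}_u *_{L'}\Bbb P_{wt}(V_3'\oplus V_2')$, but the homogeneous bundle construction needs the full opposite parabolic $P^{\prime -}$ acting through $L'$, not just its unipotent radical.
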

\begin{remark} During the proof it will be stated in a precise way how we define the weights for the weighted blow up
 and we shall give the explicit  equations defining it.
\end{remark}
\begin{proof} Let $\langle v \rangle \in (G/P)^{ss}_{\varepsilon}\cap\mathcal U_{\alpha}$. Applying Lemma
 \ref{BGG} let us represent the vector  $v$ as the exponential map from the element of the Lie algebra
  $\widetilde{u}\in\Ad (s_\alpha) \pp_u^-$ applied to the vector $v_{\pi_\alpha-\alpha}$:
$$
v=\exp(\widetilde{u})v_{\pi_\alpha-\alpha}
$$
The map  $Ad (s_\alpha) \pp_u^-\longrightarrow Ad (s_\alpha) \pp_u^-v_{\pi_\alpha-\alpha}\subset V$ defines an
$L^{'}$-equivariant embedding of the Lie algebra in the  $G$-module $V$.
We have the following isomorphism of the $L^{'}$-modules:
$$
Ad (s_\alpha) \pp_u^-v_{\pi_\alpha-\alpha}\cong \langle v_{\pi_\alpha}\rangle
\oplus V_1^{'}\oplus s_\alpha V_2^{'}\oplus \langle v_{-\alpha} \rangle
$$

The last decomposition allows us to represent the vector $v$ in the form
$$
v=\exp(b_{\alpha}e_\alpha+\sum_{\pi_\alpha-\alpha-\delta\in \supp V^{'}_1} b_{\delta}e_{-\delta}
+\sum_{\pi_\alpha-\alpha-\gamma\in \supp V_2} c_{\gamma}e_{-\gamma}+c_{\pi_\alpha}e_{-\pi_\alpha})v_{\pi_\alpha-\alpha},
$$
for $\gamma,\delta\in s_\alpha\Delta_{\pp_u^-}$.
Also we should note that the pairing with the weights  $\pi_\alpha$ and $\pi'_\beta$ define $L'$-invariant
 gradings on $V(\pi_\alpha)$.
Their values define uniquely the components of the decomposition of the module $Ad (s_\alpha) \pp_u^-v_{\pi_\alpha-\alpha}$
into irreducible  $L'$-modules given above. The latter means that using this gradings we
can define in  modules the lie the corresponding monomials in the exponential  representation of vector  $v$.
This implies that the component of vector $v$ with the weight $\pi_\alpha-\alpha-\delta\in \supp V^{'}_1$
is equal to $b_{\delta}e_{-\delta}v_{\pi_\alpha-\alpha}$, the component of weight  $\pi_\alpha-\alpha-\gamma\in \supp V_2$
  is equal to $c_{\gamma}e_{-\gamma}v_{\pi_\alpha-\alpha}$, and a component of weight $\pi_\alpha-\gamma \in \supp V^{'}_2$
is equal to $$v_{\pi_\alpha-\gamma}=\frac{1}{2!}\sum_{\gamma=\delta_1+\delta_2+\alpha}b_{\delta_1}b_{\delta_2}(e_{-\delta_1}e_{-\delta_2}+
e_{-\delta_2}e_{-\delta_1})v_{\pi_\alpha-\alpha}
+\frac{1}{2!}b_\alpha c_\gamma (e_{\alpha}e_{-\gamma}+
e_{-\gamma}e_{\alpha})v_{\pi_\alpha-\alpha}=
$$

$$=\sum_{\gamma=\delta_1+\delta_2+\alpha}b_{\delta_1}b_{\delta_2}e_{-\delta_1}e_{-\delta_2}v_{\pi_\alpha-\alpha}
+b_\alpha c_\gamma
e_{-\gamma}e_{\alpha}v_{\pi_\alpha-\alpha},
$$
where $\pi_\alpha-\alpha-\delta_i\in \supp V^{'}_1$, and an equality holds since $[e_{-\delta_2};e_{-\delta_1}]=0$ и $[e_{\alpha};e_{-\gamma}]=0$.

Now let us find a coefficient of $h_{\pi_\alpha}$. Let us begin with writing down
the component of the weight zero for the vector $v$:
%%%% x_{h_{\pi_\alpha}}=
$$
\frac{1}{2!}\sum_{\delta+\gamma=\pi_\alpha-\alpha} b_{\delta} c_{\gamma} (e_{-\delta} e_{-\gamma}+e_{-\gamma} e_{-\delta})v_{\pi_\alpha-\alpha}+c_{\pi_\alpha-\alpha} e_{-\pi_\alpha+\alpha}v_{\pi_\alpha-\alpha}-\frac{1}{2!}
b_{-\alpha}c_{\pi_\alpha}h_{\pi_\alpha+\alpha}=
$$

$$
=\frac{1}{2}\sum_{\delta+\gamma=\pi_\alpha-\alpha} b_{\delta} c_{\gamma}h_{-\pi_\alpha+\alpha+2\delta}+c_{\pi_\alpha-\alpha}h_{-\pi_\alpha+\alpha}
-\frac{1}{2}b_{-\alpha}c_{\pi_\alpha}h_{\pi_\alpha+\alpha},
$$
where we used the equality $e_{-\delta} e_{-\gamma}+e_{-\gamma} e_{-\delta}=h_{\pi_\alpha-\alpha-2\delta}$ (obtained by the
calculation in $\mathfrak sl_2$-subalgebra generated by $e_{-\delta}$ and $e_{-\gamma}$), and the equality
$e_{-\pi_\alpha+\alpha}v_{\pi_\alpha-\alpha}=h_{\pi_\alpha-\alpha}$ (that is rewritten equality
$[e_{-\pi_\alpha+\alpha},e_{\pi_\alpha-\alpha}]=-h_{\pi_\alpha-\alpha}$).

Taking into account that $\langle \delta, \pi_\alpha \rangle=0$, and that the lenghts of the projections
 $h_{\pi_\alpha-\alpha}$ and $h_{\pi_\alpha+\alpha}$
 on the component $h_{\pi_\alpha}$ are equal to $(\pi_\alpha-\alpha,\pi_\alpha)/(\pi_\alpha,\pi_\alpha)=1/2$ and $(\pi_\alpha+\alpha,\pi_\alpha)/(\pi_\alpha,\pi_\alpha)=3/2$ respectively we obtain
 that the projection of $v$ on the component $h_{\pi_\alpha}$
is equal to:
$$
 x_{h_{\pi_\alpha}}:=-\frac{1}{2}(c_{\pi_\alpha-\alpha}+\frac{1}{2}\sum_{\delta+\gamma=\pi_\alpha-\alpha} b_{\delta} c_{\gamma})
 -\frac{3}{2}b_{-\alpha}c_{\pi_\alpha}.
$$

Let us write down the component of weight  $\alpha$ for the vector $v$:

$$v_\alpha=\sum_{\delta_1+\delta_2+\delta_3=\pi_\alpha-2\alpha}b_{\delta_1}b_{\delta_2}b_{\delta_3}
e_{-\delta_1}e_{-\delta_2}e_{-\delta_3}v_{\pi_\alpha-\alpha}+
$$

$$
+\frac{1}{6}c_{-\pi_\alpha}b^2_\alpha
(e^2_\alpha e_{-\pi_\alpha}+e_\alpha e_{-\pi_\alpha}e_\alpha)v_{\pi_\alpha-\alpha}+\frac{1}{2}b_\alpha c_{\pi_\alpha-\alpha}(e_\alpha e_{-\pi_\alpha+\alpha}+ e_{-\pi_\alpha+\alpha}e_\alpha)v_{\pi_\alpha-\alpha}+
$$

$$
+\frac{1}{6}b_\alpha\sum_{\delta+\gamma=\pi_\alpha-\alpha}
 c_\gamma b_\delta
(e_{-\delta}e_{\alpha}e_{-\gamma}+e_{-\delta}e_{-\gamma}e_{\alpha}+e_{\alpha}e_{-\delta}e_{-\gamma}+e_{\alpha}e_{-\gamma}e_{-\delta})v_{\pi_\alpha-\alpha}
$$

Let us notice that in the last summand we omitted the zero monomials $e_{-\gamma}e_{\alpha}e_{-\delta}v_{\pi_\alpha-\alpha}$  and $e_{-\gamma}e_{-\delta}e_{-\alpha}v_{\pi_\alpha-\alpha}$.
They are trivial since the weights of the vectors
$e_{\alpha}e_{-\delta}v_{\pi_\alpha-\alpha}=e_{-\delta}e_{-\alpha}v_{\pi_\alpha-\alpha}$  are equal to $\pi_\alpha-\delta$
and these weight do not belong to the weight polytop of the representation  (that follows from the equality $(\pi_\alpha;\delta)=0$). Also we have $e_{\alpha}e_{-\gamma}e_{-\delta}v_{\pi_\alpha-\alpha}=0$. Indeed,
 $e_{-\gamma}e_{-\delta}v_{\pi_\alpha-\alpha}=[e_{-\gamma}[e_{-\delta},e_{\pi_\alpha-\alpha}]]=N_{\delta,{\pi_\alpha-\alpha}}[e_{-\gamma},e_{\gamma}]
 =-N_{\delta,{\pi_\alpha-\alpha}}h_\gamma$. But we have $[e_\alpha;h_\gamma]=0$,
 since  $(\gamma,\alpha)=0$;  that proves the formula.

Let us simplify expression for $v_\alpha$ using commutation relations  $[e_{-\pi_\alpha+\alpha},e_\alpha]=0$,
$[e_{-\delta},e_{\alpha}]=0$ and $[e_{\alpha};e_{-\gamma}]=0$:

$$v_\alpha=\sum_{\delta_1+\delta_2+\delta_3=\pi_\alpha-2\alpha}b_{\delta_1}b_{\delta_2}b_{\delta_3}
e_{-\delta_1}e_{-\delta_2}e_{-\delta_3}v_{\pi_\alpha-\alpha}
%+\frac{1}{3}c_{-\pi_\alpha}b^2_\alpha
%(e^2_\alpha e_{-\pi_\alpha})v_{\pi_\alpha-\alpha}+
-\frac{1}{2}c_{-\pi_\alpha}b^2_\alpha e_\alpha+
$$

$$
+b_\alpha c_{\pi_\alpha-\alpha}(e_\alpha e_{-\pi_\alpha+\alpha})v_{\pi_\alpha-\alpha}
+\frac{1}{2}b_\alpha\sum_{\delta+\gamma=\pi_\alpha-\alpha}
 c_\gamma b_\delta
(e_{\alpha}e_{-\delta}e_{-\gamma})v_{\pi_\alpha-\alpha}
$$

Denoting the first summand by $q_\alpha$ we obtain that:
$$v_\alpha=q_\alpha+Ac_{-\pi_\alpha}b^2_\alpha
(e^2_\alpha e_{-\pi_\alpha})v_{\pi_\alpha-\alpha}+x_{h_{\pi_\alpha}}b_\alpha e_{\alpha}e_{-\pi_\alpha+\alpha}v_{\pi_\alpha-\alpha},
$$
where $A$ is nonzero constant whose value is not important for us.

%  (e^2_\alpha e_{-\pi_\alpha})v_{\pi_\alpha-\alpha}=3e_\alpha

To find the intersection of $\mathcal U_\alpha$ with the divisor $D_{h_{\pi_\alpha}}$ we must put the coordinate $x_{h_{\pi_\alpha}}$,
 equal to zero, that implies  $\langle v\rangle \in G/P\cap \mathcal U_\alpha\cap D_{h_{\pi_\alpha}}$:
$$v_\alpha=q_\alpha+Ac_{-\pi_\alpha}b^2_\alpha
(e^2_\alpha e_{-\pi_\alpha})v_{\pi_\alpha-\alpha}
$$

Denote by  $x_\omega$ the coordinate of the weight vector $v(\omega)$. The ideal  $\mathcal J$
of the subvariety  $P^{'-}_uP^{'}/P^{'}\subset \Bbb P(V_1)
\cap \mathcal U_\alpha$
is generated by the following elements:

$$
Q_\alpha v(\alpha):=x_\alpha v(\alpha)-\sum_{\delta_1+\delta_2+\delta_3=\pi_\alpha-2\alpha}x_{\delta_1}x_{\delta_2}x_{\delta_3}
e_{-\delta_1}e_{-\delta_2}e_{-\delta_3}v_{\pi_\alpha-\alpha}
$$
$$
P_{\pi_\alpha-\gamma}v(\pi_\alpha-\gamma):=x_{\pi_\alpha-\gamma}v(\pi_\alpha-\gamma)-\sum_{\mu=\delta_1+\delta_2+\alpha}x_{\delta_1}x_{\delta_2}e_{-\delta_1}e_{-\delta_2}v_{\pi_\alpha-\alpha},
$$
where $\pi_\alpha- \gamma\in \supp(V_2^{'})$. The last assertion can be obtained from  the expression of
 the element of $P^{'-}_uP^{'}/P^{'}$ as the exponential map  $\exp(\sum_{\pi_\alpha-\alpha-\delta\in \supp V^{'}_1} b_{\delta}e_{-\delta})v_{\pi_\alpha-\alpha}$.
The conormal bundle to  $P^{'-}_uP^{'}/P^{'}$ in $\Bbb P(V_1)$ is identified with $\mathcal J/\mathcal J^2$.
It is generated by the elements described above. Indeed the number of these elements is equal to the dimension of the fiber
 of conormal bundle and the linear parts of these equations are equal to the coordinates  $x_\alpha$ and $x_{\pi_\alpha-\gamma}$
for $\pi_\alpha- \gamma\in \supp(V_2^{'})$, that implies the linear independence of these equations modulo
 $\mathcal J^2$ (since the elements from this ideal do not have linear parts).

The relations for the coordinates of the vector $\langle v \rangle\in ((G/P)^{ss}_{\varepsilon}\cap D_{h_{\pi_\alpha}}\cap\mathcal U_{\alpha})$ can be written in the form:

\hspace{20ex}  $ \left \{ \begin
{array}{cl}
Q_\alpha=c_{-\pi_\alpha}b^2_\alpha N_\alpha  \\
\ \ P_{\pi_\alpha-\gamma}=b_\alpha c_\gamma N_{\pi_\alpha-\gamma}     \\
\end{array} \ \ \ \ \ (*)
\right.$

\noindent where $N_\alpha=A{|(e^2_\alpha e_{-\pi_\alpha})v_{\pi_\alpha-\alpha}|}/{|v(\alpha)|}, N_{\pi_\alpha-\gamma}=|{e_{-\gamma}e_{\alpha}v_{\pi_\alpha-\alpha}|}/{|v(\pi_\alpha-\gamma)|}$ are the  normalizing constants.

Let us notice that since we have a decomposition  $\mathcal J/\mathcal J^2$ in the direct sum of two vector bundles
 $P^{'-}*_L\langle v_\alpha\rangle$ and $P^{'-}*_LV^{'}_2$,
  we can consider a weighted pprojectivisation $P^{'-}*_L\Bbb P_{wt}(\Bbb K v_\alpha\oplus V^{'}_2)$
with the weight $2$ on the first subbundle and with the weigth $1$ on the second.
We also have a correctly defined weighted blow up $Bl_{wt}$ of the variety  $\mathcal U_{\alpha}\cap D_{h_{\pi_\alpha}}$
in $P^{'-}_uP^{'}/P^{'}$. In other words we can consider the weighted   $\proj_{wt} \bigoplus \mathcal J^n$,
 where the generator $Q_\alpha$ of the module $\mathcal J$
has weight $2$,  and other generators  $P_{\pi_\alpha-\gamma}$ have weight  $1$.

We shall gve the argument that provide us the explicit equations of the quotient that
allow us to identify it with the weighted blow up.
In the local coordinates the weighted blow up $Bl_{wt}$ can be described as the subvariety in $\Bbb K^{\dim V_1-1}\times \Bbb P_{wt}(\Bbb K v_\alpha\oplus V^{'}_2)$. Here we identify  $\Bbb K^{\dim V_1-1}$ with $V_1^{'}\oplus V_2^{'}\oplus \Bbb K v_{\pi_\alpha}$.
We also denote by $c_{\pi_\alpha}$, $c_\gamma$ for $\pi_\alpha-\gamma\in \supp V_2^{'}$ the weighted homogenous coordinatesof the subspace
 $\Bbb P_{wt}(\Bbb K v_\alpha\oplus V^{'}_2)$. Besides the first coordinate has weight $2$, and the latter have weigth $1$.
The variety  $Bl_{wt}$ is defined by the followin system of equations:

\hspace{20ex}  $ \left \{ \begin
{array}{cl}
P_{\pi_\alpha-\gamma_i}c_{\gamma_j}=P_{\pi_\alpha-\gamma_j}c_{\gamma_i} \\
Q_{\alpha}c^2_{\gamma_j}=P^2_{\pi_\alpha-\gamma_j}c_{\pi_\alpha}       \\
\end{array} \ \ \ \ \ (*_{BL})
\right.$
%$$
%p_{\pi_\alpha-\gamma_i}c_{\gamma_j}=p_{\pi_\alpha-\gamma_j}c_{\gamma_i}
%$$
%$$
%q_{\alpha}c^2_{\gamma_j}=p^2_{\gamma_j}c_{\pi_\alpha}
%$$

\noindent дл€ $\pi_\alpha-\gamma\in \supp V_2^{'}$.
This system of equations is invariant with respect of the action of one-parameter subgroup  $\lambda$. Using the equations
 $(*)$   on the coordinates of $\langle v\rangle \in((G/P)^{ss}_{\varepsilon}\cap D_{h_{\pi_\alpha}}\cap\mathcal U_{\alpha})$,
we obtain that the required quotient  $\lambda\backslash \!\! \backslash((G/P)^{ss}_{\varepsilon}\cap D_{h_{\pi_\alpha}}\cap\mathcal U_{\alpha})$
is defined by the system of equations $(*_{BL})$ and thus can be identified withe the weighted blow up of the variety $\Bbb P(V_1)\cap\mathcal U_{\alpha}$ in the subvariety $P^{'}_uP^{'}/P^{'}$.
\end{proof}

\begin{remark}\label{wtblrem}
For some purposes it is useful to have the following interpretation of the weighted blow up. Let $ \Bbb K^{l_0+l}$
be the vector space that we want to blow up in the subspace  $\Bbb K^{l_0}$ with the weights $(a_1,\ldots,a_l)$ on the subspace  $\Bbb K^l$.
 Let $\mu=\mu_{a_1}\times \ldots\times \mu_{a_l}$ be the product of the groups of the roots of unity of orders $(a_1,\ldots,a_l)$.
Consider the quotient morphism $\tau:\Bbb K^l\rightarrow \Bbb K^l$ by the action of the group  $\mu$, defined on the $i$-th coordinate as  $z_i\rightarrow z_i^{a_i}$.
  We also consider the quotient morphism  $\tau_{\Bbb P}$
  of the projective space  $\Bbb P(\Bbb K^l)$  by the action of $\mu$, that maps $\Bbb P(\Bbb K^l)$
  to the weighted projective space
  $\Bbb P(a_1,\ldots,a_l)$. In the homogeneous coordinates we have:
  $$
  (z_1:\ldots:z_l)\longrightarrow   (z_1^{a_1}:\ldots:z_l^{a_l})
  $$

  The action of $\mu$ can be defined on the blow up $W\oplus \Bbb K^{l}$ in $W$ ( defined as
  $Bl:=\{(v,z,\xi)\in \Bbb K^{l_0+l} \times \Bbb P(\Bbb K^l)|  z_i\xi_j=z_j\xi_i\}$). The generator $\varepsilon_{i} \in \mu_{a_i}$
  acts as  $z_i\rightarrow \varepsilon_{i}z_i$ on the coordinates of $\Bbb K^{l}$ and  as $\xi_i\rightarrow \varepsilon_{i}\xi_i$
on the homogeneous coordinates of $\Bbb P(\Bbb K^l)$. The quotient $\widetilde{\tau}$ of the blow up $Bl$ by the considered action of $\mu$ is identified with the weighted blow up $Bl_{wt}=\{(v,z,\xi)\in \Bbb K^{l_0+l}\times \Bbb P(a_1,\ldots,a_l)|  z_i^{a_j}\xi_j^{a_i}=z_j^{a_i}\xi_i^{a_j}\}$.
The quotient map is the restriction of the map $$\Bbb K^{l_0+l} \times \Bbb P(\Bbb K^l)\rightarrow
\Bbb K^{l_0+l}\times \Bbb P(a_1,\ldots,a_l)$$
$$(w\oplus (z_1,\ldots,z_l))\times (\xi_1:\ldots:\xi_l)\longrightarrow (w\oplus (z_1^{a_1},\ldots,z_l^{a_l}))\times  (\xi_1^{a_1}:\ldots:\xi_l^{a_l}) $$
Indeed making a substitute   $z_i\rightarrow z_i^{a_i}$, $\xi_i\rightarrow \xi_i^{a_i}$ in the equation $z_i^{a_j}\xi_j^{a_i}=z_j^{a_i}\xi_i^{a_j}$,
we get $(z_i\xi_j)^{a_ia_j}=(z_j\xi_i)^{a_ia_j}$, that is the system of equations defining the weighted blow up $Bl$.

Summarizing what was said above we obtain the following commutative diagram:
$$
\xymatrix{
 \Bbb K^{l_0+l} \ar[d]_{\tau}& \ar[l] Bl   \ar@{^{(}->}[r] \ar[d]_{\widetilde{\tau}}&  \ar[d]_{\tau\times\tau_{\Bbb P}}\Bbb K^{l_0+l} \times \Bbb P(\Bbb K^l) & \\
 \Bbb K^{l_0+l}&  \ar[l] Bl_{wt}   \ar@{^{(}->}[r]&\Bbb K^{l_0+l}\times \Bbb P(a_1,\ldots,a_l)& \\
}
$$

\end{remark}

%%%%%%%%%%%%%%%%%%%%%%%%%%%%%%%%%%%%%%%%%%%%%%%%%%%%%%%%%%%%%%%%%%%%%%%%%%%%%%%%%%%%%%%%%%%%%%%%%%%%%%%%%%%%%%%%%%%%%%%%%%%%%%%

%%%%%%%%%%%%%%%%%%%%%%%%%%%%%%%%%%%%%%%%%%%%%%%%%%%%%%%%%%%%%%%%%%%%%%%%%%%%%%%%%%%%%%%%%%%%%%%%%%%%%%%%%%%%%%%%%%%%%%%%%%%%%%%

\section{The embedding of the torsor over  $X_{\Delta}$ in the affine cone over $G/P$}

Let us formulate the main theorem from the paper of Skorobogatov and Serganova
\cite{skor}. We shall follow their main scheme of the proof but we shall modify some details
that will allow us to get the analogous results in the case of the root system $E_8$.

\begin{theorem}\label{main} Consider the embedding of the variety $G/P$ in the projectivization
of miniscule representation $\Bbb P(V(\pi_\alpha))$. For the left action of maximal torus
 $T$ on $G/P$ we denote by $(G/P)^s$
the set of stable points with respect of  $G$-linearized sheaf
$\mathcal O(1)|_{G/P}$. Let   $\tau:(G/P)^s\longrightarrow
T\backslash \!\! \backslash(G/P)^s$ be a quotient morphism. There exists an embedding
$\imath$ of the del Pezzo surface $X_{ \Delta}$ with the degree $9-\rk
\Delta$ into the quotient  $T\backslash \!\!\backslash(G/P)^s$ such that for the torsor
$\mathcal T=\tau^{-1}(\imath (X_{ \Delta}))$ the following conditions hold

\begin{itemize}
 \item[$\bullet$] Consider the hypersurface $H_\omega=\{\langle v\rangle \in \Bbb P(V(\pi_\alpha))\  |\
 v_\omega=0\}$. The divisor $E_\alpha=\tau(H_\alpha\cap \mathcal T)\subset X_{\Delta}$ is equal to a
 $(-1)$-curve on $X_{\Delta}$.

 \item[$\bullet$] The $(-1)$ curves $E_{\omega_1}$ and $E_{\omega_2}$ do not intersect iff the weights
  $\omega_1$ and $\omega_2$ are not adjacent in the weight polytop.
\end{itemize}

\end{theorem}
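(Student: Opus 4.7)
The plan is to prove Theorem \ref{main} by induction on the rank of the root system $\Delta$, with the base case handled directly for small degree (where the del Pezzo surface is a blow-up of $\mathbb{P}^2$ in at most four points and the torsor can be described explicitly, matching the case $\Delta'$ at the start of the induction for $\Delta = A_4$). The inductive hypothesis gives a $T'$-torsor $\mathcal{T}' \subset G'/P' \subset \Bbb P(V'(\pi_\beta'))$ over $X_{\Delta'}$ satisfying the analogous statement. The engine of the induction is Theorem \ref{lambda factor}: the $\lambda$-quotient $\widetilde{X} = \lambda\backslash\!\!\backslash(G/P)^{s}$ is naturally the blow-up of $\Bbb P(V_1) = \Bbb P(V(\pi_\beta'))$ along $G'/P'$, which precisely mirrors the geometric blow-up $\widetilde{\sigma}: X_\Delta \to X_{\Delta'}$ at the point $e_\Delta$.

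Concretely, I would first fix a point $s \in \mathcal{T}'$ lying in the fiber over $e_\Delta$, and a sufficiently general $s' \in G'/P'$. After fixing the weight basis of $V'(\pi_\beta')$, the element $s's^{-1}$ defines an automorphism of $\Bbb P(V'(\pi_\beta'))$ by coordinate-wise multiplication, and by genericity the translated torsor $s's^{-1}\mathcal{T}'$ meets $G'/P'$ cleanly in the single $T'$-orbit $Ts'$. This cleanness is what lets me take a well-defined proper transform $\widetilde{\mathcal{T}} = \sigma^{-1}_*(s's^{-1}\mathcal{T}')$ in the blow-up $Bl(\Bbb P(V_1); G'/P')$; the exceptional fiber over $Ts'$ supplies the one extra $T'$-orbit that corresponds to the new $(-1)$-curve on $X_\Delta$. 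Pulling back under $p_{\pi_\alpha}: (G/P)^s \to \widetilde{X}$ produces a locally closed $\widehat{T}$-invariant subvariety $\mathcal{T} \subset (G/P)^{sf}$, whose affine cone in $V(\pi_\alpha)$ is the candidate universal torsor $\widehat{\mathcal{T}}$.

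Verification then splits into two tasks. First, the universal property: applying (CTS) functorially to $\widehat{\mathcal{T}} \to X_\Delta$ and comparing rank data---$\rk \Xi(\widehat{T}) = \rk T = \rk \Delta + 1 = \rk \Pic(X_\Delta)$---together with the inductive identification $\Xi(\widehat{T}') \cong \Pic(X_{\Delta'})$ and the blow-up formula for Picard groups, yields that $\Xi(\widehat{T}) \to \Pic(X_\Delta)$ is an isomorphism. Second, the $(-1)$-curve statement: the hyperplane sections $H_\omega = \{v_\omega = 0\}$ pull back under the construction to $T'$-orbits lying in specific Schubert divisors, and using Lemma \ref{BGG} and the description of weights in $V_1, V_2$ I can read off exactly which pairs of curves meet in terms of whether $\omega_1, \omega_2$ span an edge of the weight polytope---the transitivity of the Weyl group on edges and the base case gluing handles the combinatorics.

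The principal obstacle is the case $\Delta = E_8$, where Theorem \ref{lambda factor} fails: the quotient is the weighted projective space minus $G'/P'$ described in Proposition \ref{E_8factor}, and only after restricting to $D_{h_{\pi_\alpha}}$ and working in a $T'$-invariant neighborhood of $s'$ does Proposition \ref{bl} identify the quotient with a weighted blow-up (weight $2$ along $\langle v_\alpha\rangle$, weight $1$ along $V_2'$). The plan here is to use the local $\mu$-cover interpretation of the weighted blow-up from Remark \ref{wtblrem} to reduce the proper-transform construction to the ordinary blow-up case upstairs, and to pin down the genericity hypothesis on $X_\Delta$ so that $e_\Delta$ (equivalently the chosen $s$) avoids the locus where the weighted structure degenerates. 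Once this local model is in place, the proper transform, preimage under $\lambda$, and verification of the $(-1)$-curve combinatorics proceed exactly as in the unweighted case.
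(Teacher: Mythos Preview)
Your overall architecture matches the paper's: induction on $\rk\Delta$, the $s's^{-1}$-translation trick, proper transform under the blow-up of Theorem~\ref{lambda factor}, pullback along $p_{\pi_\alpha}$, and the weighted-blow-up workaround for $E_8$. But you have underestimated the one genuinely hard step.

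The phrase ``by genericity the translated torsor $s's^{-1}\mathcal{T}'$ meets $G'/P'$ cleanly in the single $T'$-orbit $Ts'$'' hides the main technical content of the induction. This is Theorem~\ref{P-Q}, and it is \emph{not} a soft genericity statement: one must show that for general $s'$ the quadratic and cubic equations $p_\mu(s's^{-1}x)$, $q_\nu(s's^{-1}x)$ (which cut out $G'/P'$ in $\Bbb P(V_1)$) restrict nontrivially to $\mathcal{T}'$. The paper does this by an explicit computation: it identifies the monomials of $p_\mu$, $q_\nu$ with reducible members of specific linear systems of conics and cubics on $X_{\Delta'}$, isolates a particular monomial in the expansion over the big cell of $G'/P'$, and checks its coefficient is nonzero by an elementary lemma about lines and conics through points in general position. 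Without this, you cannot conclude the intersection is a single orbit, nor that the proper transform downstairs is the blow-up at the \emph{reduced} point $e_{\rk\Delta}$ rather than at a fatter subscheme (the paper checks this separately in Proposition~\ref{blowup} via the universal property of blow-ups, showing the inverse-image ideal is maximal).

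The same omission bites your $(-1)$-curve verification. For weights $\omega\in\supp V_1$ the inductive hypothesis suffices, and $\omega=\pi_\alpha$ gives the new exceptional curve; but for $\mu\in\supp V_2$ and $\nu\in\supp V_3$ the divisors $H_\mu\cap\mathcal T$, $H_\nu\cap\mathcal T$ are governed by $p_\mu=0$, $q_\nu=0$, and knowing these cut out the correct conics and nodal cubics through $e_{\rk\Delta}$ (hence become $(-1)$-curves after blow-up, with the right intersection numbers with $E_{\pi_\alpha}$) is again precisely Theorem~\ref{P-Q}. Your appeal to Lemma~\ref{BGG} and edge combinatorics does not supply this. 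For $E_8$ there is an additional degree-four equation $q_4$ whose nontriviality requires yet another explicit monomial computation.

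A minor point: the paper's universality argument does not proceed by rank counting alone; it compares $(\mathrm{CTS})$ for $\widehat{\mathcal T}$ against $(\mathrm{CTS})$ for the ambient torsor $\widehat{G/P}\to \widehat T\backslash\!\!\backslash\widehat{G/P}$ (known to be universal since $\Pic(\widehat{G/P})=0$) and uses that the hyperplane classes $H_\omega$ generate both Picard groups to force the comparison map to be an isomorphism.
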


We construct the required embedding by the induction. The base of the induction follows from the isomorphism
 $X_{A^4}\cong T \backslash \!\!\backslash (G/P)$
(for $G=SL(5)$), that was proved by Skorobogatov in   \cite {SKO1}.

 Let us assume that  embedding of the torsor
$\mathcal T^{\, '}\subset G^{'}/P^{'}\subset \Bbb P(V_1)$ is already constructed.
 Let us denote by $\tau^{'}$ the quotient by the left action of the torus  $T^{'}$.
 The quotient $\tau^{'}(\mathcal
T^{'})=T^{'}\backslash \!\!\backslash\mathcal T^{\, '}$ is a del Pezzo surface
$X_{\Delta^{'}}$ of the degree $9-\rk \Delta^{'}$. This is a plane with the blown up $\rk \Delta^{'}$ points
$e_i$ in a general position. (Cf.\cite{Manin}. By the general position it is assumed that no
line contain three points and no conic contain $6$ points from this set)

 The divisors corresponding to the exceptional curves lying over $e_i$ we denote
by $E_i$. We want  to construct torsor $\mathcal T\subset G/P\subset
\Bbb P(V(\pi_\alpha))$ over del Pezzo surface $X_\Delta$,
that is obtained from $X_{\Delta^{'}}$ by the blow up of the point $e_{\rk
\Delta}\in X_{\Delta^{'}}$ that is in general position with the points  $e_i$ where
 $i\leqslant \rk \Delta^{'}$ (The image of the point
$e_{\rk \Delta}$ under the contraction $X_{\Delta^{'}}\dashrightarrow
\Bbb P^2$ is also denoted by $e_{\rk \Delta}$).

Let  $s\in G^{'}/P^{'}$ be a point for which
$p_{\pi_\alpha}(s)=e_{\rk \Delta}$, and $s^{'}\in G^{'}/P^{'}$ is another point
on the flag variety (later we assume that it is sufficiently general).
 Let $\{s_\omega\}$ be the set of homogenous coordinates of the point
 $s\in \Bbb P(V_1)$ with respect to the weight basis of the subspace
 $V_1$ (it is well defined since
 $V_1$ --- is miniscule representation). By the action of $s$
on the point of projective space $x=\langle v\rangle$ we mean the action $sx= \langle\sum s_\omega v_\omega\rangle$.

Let us prove the following theorem (cf. \cite[6.3]{skor}).

\begin{theorem} \label{P-Q} Consider the embedding $G^{'}/P^{'}\subset \Bbb
P(V_1)$. Let $s\in G^{'}/P^{'}$ be a point of a flag variety  such that
 $p_{\pi_\alpha}(s)=e_{\rk \Delta}$. Consider the equations
$p_\mu(x)=0$ and $q_\nu(x)=0$ from the Remark \ref{equation}. For the general point
 $s^{'}\in G^{'}/P^{'}$ we have the following statements

\begin{itemize}
 \item[1)] The restrictions of  $p_\mu(x)=0$ and
 $q_\nu(x)=0$ to $s^{'}s^{-1}\mathcal T^{\,
 '}$ nontrivial. The image of the set of zeros of these equations define nontrivial divisors
 on $X_{\Delta^{'}}$ those  proper transforms with respect to the blow up
  $\sigma:X_{\Delta}\longrightarrow X_{\Delta^{'}}$ are $(-1)$-curves.
 \item[2)] The varieties  $s^{'}s^{-1}\mathcal T^{\,
 '}$ and  $G^{'}/P^{'}$ intersect in a single  $T^{'}$-orbit i.e. $s^{'}s^{-1}\mathcal T^{\,
 '}\cap G^{'}/P^{'}=T^{'}s^{'}$.
\end{itemize}

\end{theorem}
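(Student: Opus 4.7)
The plan is to handle Part 2 first, as a clean torus-orbit calculation, and then to exploit it for Part 1 where the real work is a weight-theoretic identification of the resulting divisors with $(-1)$-curves.

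\textbf{Part 2.} The inclusion $T's'\subseteq s's^{-1}\mathcal{T}'\cap G'/P'$ is immediate: since $s\in\mathcal{T}'$, we have $s's^{-1}\cdot s=s'\in s's^{-1}\mathcal{T}'$, and $s'\in G'/P'$ by hypothesis; the diagonal scaling by $s's^{-1}$ commutes with the $T'$-action on $\Bbb P(V_1)$, so both sets are $T'$-invariant and the $T'$-orbit of $s'$ sits inside the intersection. For the reverse inclusion I use that $\mathcal{T}'$ is a $T'$-torsor over $X_{\Delta'}$: its $T'$-orbits are the fibers $\mathcal{T}'_e$ of $\tau'$, and by commutativity each translated fiber $s's^{-1}\mathcal{T}'_e$ is again a single $T'$-orbit in $\Bbb P(V_1)$. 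The claim thus reduces to showing that for generic $s'$ the only fiber whose translate lands in $G'/P'$ is the one over $e_{\rk\Delta}$. This is a closed condition on $(e,s')$ cut out by pulling back the defining equations $p_\mu$ and $q_\nu$ of $G'/P'$: for a representative $\tilde s\in\mathcal{T}'_e$ with $e\neq e_{\rk\Delta}$, the quantity $p_\mu(s's^{-1}\tilde s)$ is a non-trivial Laurent polynomial in the coordinates of $s'$ (each monomial of $p_\mu$ acquires an independent rescaling by ratios $s'_\omega/s_\omega$), hence vanishes only on a proper closed subset of the parameter space of $s'$.

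\textbf{Part 1.} The restrictions of $p_\mu$ and $q_\nu$ to $s's^{-1}\mathcal{T}'$ are precisely the rescaled polynomials $\tilde p_\mu(v):=p_\mu(s's^{-1}v)$ and $\tilde q_\nu(v):=q_\nu(s's^{-1}v)$ evaluated on $\mathcal{T}'$. Although $p_\mu$ and $q_\nu$ vanish identically on $\mathcal{T}'\subseteq G'/P'$, the rescaled versions do not, by the same mechanism as in Part 2: each monomial is rescaled independently, so for generic $s'$ the combinations $\tilde p_\mu$ and $\tilde q_\nu$ are non-zero regular functions on $\mathcal{T}'$. Their zero loci define $T'$-invariant divisors in $\mathcal{T}'$ which descend through $\tau'$ to divisors $D_\mu$, $D_\nu$ on $X_{\Delta'}$.

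It remains to identify the proper transforms $\sigma_*^{-1}D_\mu$, $\sigma_*^{-1}D_\nu$ on $X_\Delta$ as $(-1)$-curves. Weight-theoretically, $\mu\in\supp V_2$ and $\nu\in\supp V_{\geq 3}$ are exactly the weights of $V(\pi_\alpha)$ lying outside $\langle v(\pi_\alpha)\rangle\oplus V_1$; Theorem \ref{main} applied inductively to $\Delta'$ identifies the $(-1)$-curves on $X_{\Delta'}$ with weights in $\supp V_1$, so $\mu$ and $\nu$ should correspond to the new $(-1)$-curves arising after the blow-up at $e_{\rk\Delta}$. I plan to verify this by computing: (i) the intersection of $D_\mu$ with the already-known $(-1)$-curves of $X_{\Delta'}$, read off from the monomial structure of $p_\mu$ (these pick out precisely the weights $\omega\in\supp V_1$ adjacent to $\mu$ in the weight polytope of $V(\pi_\alpha)$); and (ii) the multiplicity of $D_\mu$ at $e_{\rk\Delta}$, as the vanishing order of $\tilde p_\mu$ at $s$. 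The blow-up formula then yields self-intersection $-1$ for the proper transform, with analogous arguments for $q_\nu$.

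\textbf{Main obstacle.} The hardest step is (ii): a clean local computation of the vanishing order of the rescaled equations at $s$, which in the $E_8$ case is further complicated by the cubic equations $q_\nu$ and by the non-trivial $V_{\geq 3}$. A parallel and safer strategy is to identify the class of $D_\mu$ (respectively $D_\nu$) in $\Pic X_{\Delta'}$ directly from the intersection pattern computed in (i) and then invoke the classification of $(-1)$-curves on del Pezzo surfaces to pin down $\sigma_*^{-1}D_\mu$ uniquely, bypassing the explicit multiplicity count.
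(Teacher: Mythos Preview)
Your proposal has a genuine gap at the heart of the nontriviality argument in Part 1, and this gap propagates to Part 2.

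You write that ``each monomial is rescaled independently, so for generic $s'$ the combinations $\tilde p_\mu$ and $\tilde q_\nu$ are non-zero regular functions on $\mathcal T'$.'' This is not a proof. The point $s'$ is not free in $\Bbb P(V_1)$: it is constrained to lie on $G'/P'$, which is cut out precisely by the equations $p_\mu=0$. Viewed as a polynomial in $s'$, the rescaled $\tilde p_\mu(s')=p_\mu(s's^{-1}x)$ has the same $T'$-weight $\mu$ as $p_\mu(s')$, so it could a priori be a scalar multiple of $p_\mu(s')$ for every $x\in\mathcal T'$ and hence vanish identically on $G'/P'$. The paper's argument faces this head-on: since the weight-$\mu$ component $\mathcal J_{G'/P'}(\mu)$ of the ideal is one-dimensional (Remark~\ref{equation}), identical vanishing would force the coefficient sets $\{p_{\gamma\delta}\,x_\gamma x_\delta/s_\gamma s_\delta\}$ and $\{p_{\gamma\delta}\}$ to be proportional, giving relations $x_{\gamma_1}x_{\delta_1}/s_{\gamma_1}s_{\delta_1}=x_{\gamma_2}x_{\delta_2}/s_{\gamma_2}s_{\delta_2}$ on $\mathcal T'$ which are then contradicted by evaluating at a point on a single $(-1)$-curve. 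For $q_\nu$ this is harder still: the paper restricts $\tilde q_\nu(s')$ to the big cell of $G'/P'$, isolates an explicit monomial in the cell coordinates $c_\gamma$, computes its coefficient as a six-term expression in structure constants and coordinates of $x$, and proves it nonvanishing via a separate elementary lemma about lines and conics through points in general position. Your ``independent rescaling'' slogan does none of this work.

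Your order is also inverted. In the paper Part 2 is a one-line consequence of Part 1: the $p_\mu$ cut out $G'/P'$ in $\Bbb P(V_1)$, and Part 1 shows that on $X_{\Delta'}$ the translated equations $p_\mu(s's^{-1}x)=0$ define conics meeting transversally exactly at $e_{\rk\Delta}$, so the intersection upstairs is the single orbit $T's'$. Your direct attempt at Part 2 runs into the same unresolved nontriviality issue. Finally, for the identification of the divisors the paper does not compute intersection numbers or multiplicities at all: it observes directly, via Manin's classification, that the monomials $x_\gamma x_\delta$ occurring in $p_\mu$ are reducible members of a fixed pencil (e.g.\ $|L_0-E_1|$), so $p_\mu(s's^{-1}x)=0$ is automatically an element of that pencil passing through $e_{\rk\Delta}$, hence a $(-1)$-curve after blow-up. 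Your intersection-theoretic route is plausible but circuitous by comparison.
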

\begin{remark} We give an alternative prove that differs from   \cite[6.3]{skor}
and can be written uniformly for   $p_\mu(x)$ and
 $q_\nu(x)$ that generalizes to the case of $E_8$.
\end{remark}
\begin{remark} For the points $x\in \mathcal T^{\,
 '}$  by the phrase: ``the equation $x_\omega=0$ (or $p_\mu(x)=0$,
 $q_\nu(x)=0$) defines a divisor  (curve) on the surface $X_{\Delta^{'}}$'', for the brevity we mean the following:
  the equality  $x_\omega=0$ (or $p_\mu(x)=0$,
 $q_\nu(x)=0$) defines a divisor of zeros of the section of the line bundle  $\mathcal O(1)$
 (corr. $\mathcal O(2)$, $\mathcal O(3)$)
 on $\Bbb P(V_1)$. We can intersect it with the torsor ${\mathcal T^{\,
 '}}$. In the case when this intersection is nontrivial since the sections are semiinvariant
 with respect to the action $T^{'}$ it follows that $x_\omega|_{{\mathcal T^{\, '}}}=0$ (or $p_\mu(x)|_{\mathcal T^{\,
 '}}=0$, $q_\nu(x)|_{\mathcal T^{\, '}}=0$) define also the divisors on the quotient
  $X_{\Delta^{'}}=T^{'}\backslash \!\! \backslash {\mathcal T^{\,
 '}}$.
\end{remark}
\begin{proof} The first part of the theorem is equivalent to the statement  that the restrictions on $\mathcal T^{\,
 '}$ of the equations $p_\mu(s^{'}s^{-1}x)=0$ and
 $q_\nu(s^{'}s^{-1}x)=0$ are nontrivial. Indeed let us notice that these polynomials  are zero in the point   $s$,
 since $p_\mu(x)$ and
 $q_\nu(x)$ are equal to zero on the flag variety $G^{'}/P^{'}$.
By the induction assumption the equations  $x_\omega=0$ define the
$(-1)$-curves $E_\omega$ on $X_{\Delta^{'}}$, and also on
$T^{'}\backslash \!\!\backslash{(s^{'}s^{-1}\mathcal T^{\,
 '}})$.

Consider the curves  $C$ of the degree  $2$ on $X_{\Delta^{'}}$ with respect to the pairing
with the canonical class (i.e.
$(K_{X_{\Delta^{'}}};C)=2$) passing through  $e_{\rk\Delta}$ (it will
become a  $(-1)$-curve after the blow up of the point $e_{\rk\Delta}$).
Let us describe  these curves  by the equations. From \cite{Manin} it is easy to see that after
the contraction of $X_{\Delta^{'}}$  to $\Bbb P^2$ such curve maps
to the line passing through   $e_{\rk\Delta}$ and
$e_i$ for some $i$ (for example when $i=1$ we obtain
$C=L_0-E_1-e_{\rk\Delta}$), or to the conic passing through any four points and the point $e_{\rk\Delta}$
($C=L_0-E_1-E_2-E_3-E_4-e_{\rk\Delta}$). Or in the cubic  (for example
$C=L_0-2E_1-E_2-E_3-E_4-E_5-E_6-E_7-e_{\rk\Delta}$)  that has the ordinary double point
in some point $e_i$ and passes through all points  (this case occurs only when
$\Delta^{'}$ is of type $E_7$). Let us notice that the dimensions of the linear systems
defined by these curves is equal to $0$. Let us show that
$p_\mu(s^{'}s^{-1}x)=0$ defines a curve of the considered type (in the case when $p_\mu(s^{'}s^{-1}x)$ is not equal to zero on
$\mathcal T^{\, '}$).

In the first case we have the following presentations of the class of curve $C$ as
the sum of two classes of $(-1)$-curves (cf. fig.3):
 $C=(L_0-E_1-E_i)+E_i, \ \ \ \text{where}\ i\neq 1.$
In the second  (cf. fig.3) and in the third case we have:
\noindent $C=(2L_0-E_1-E_2-E_3-E_4)=(L_0-E_1-E_2)+(L_0-E_3-E_4)=
(L_0-E_1-E_4)+(L_0-E_3-E_2)=(L_0-E_1-E_3)+(L_0-E_2-E_4)=
(2L_0-E_1-E_2-E_3-E_4-E_5)+E_5=(2L_0-E_1-E_2-E_3-E_4-E_6)+E_6$

\begin{center}
  \epsfig{file=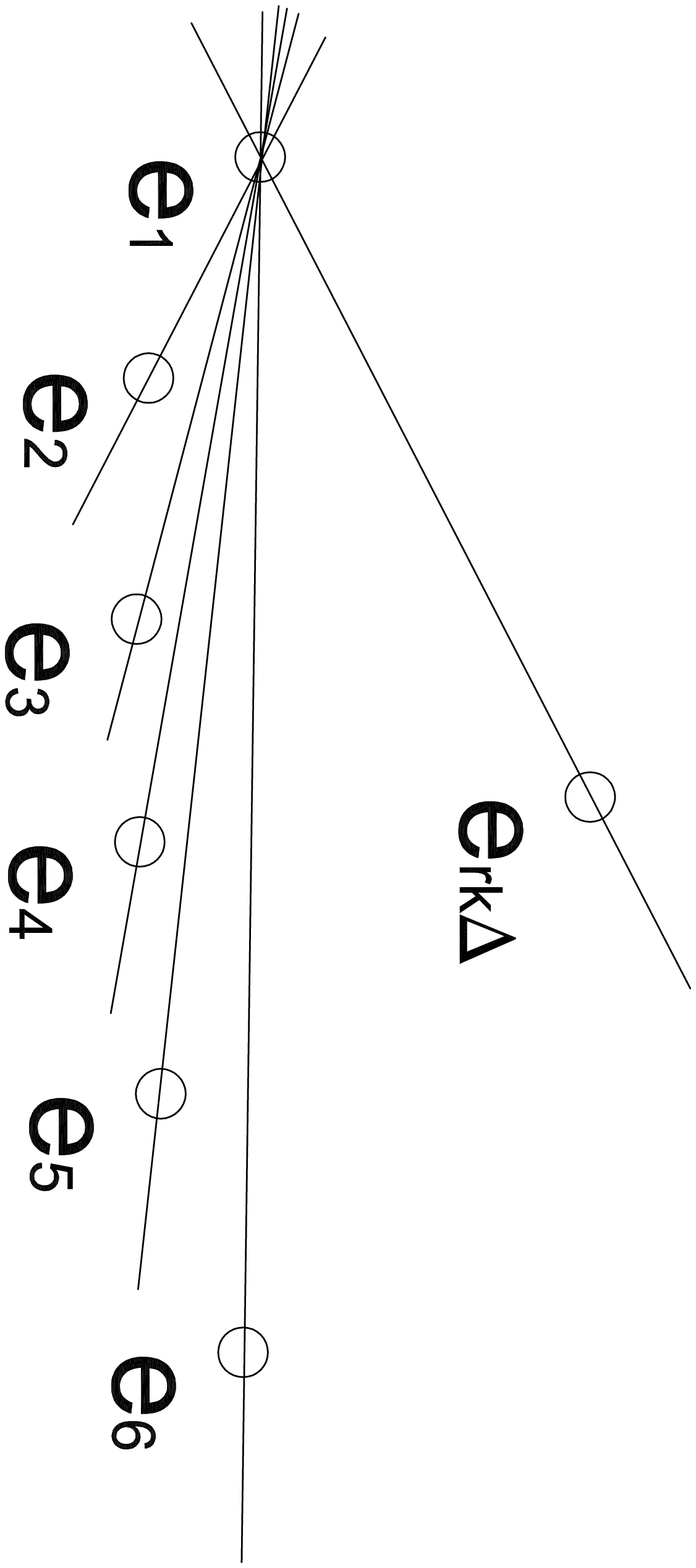,height=7cm,angle=90,clip=}\epsfig{file=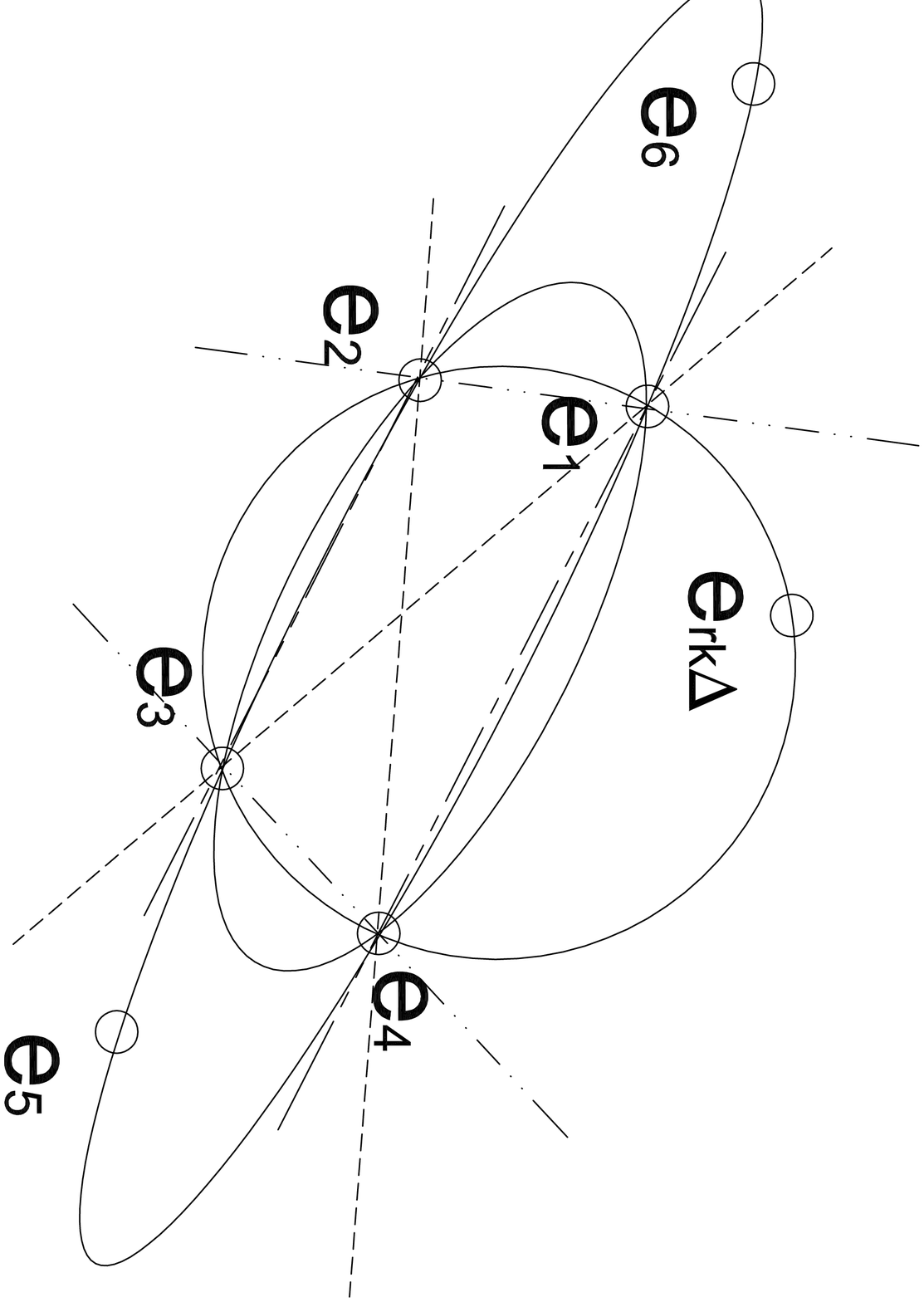,height=6cm,angle=90,clip=}

fig.3
\end{center}

$$C=(3L_0-2E_1-\sum \limits_{i\neq 1}E_i)=(L_0-E_1-E_i)+(2L_0-\sum \limits_{j\neq 1,i}E_j) \ \text{где}\ i\neq 1.$$
The classes of $(-1)$-curves are in bijective correspondence  with the weights
$\omega$ (cf.\cite{Manin}), the following decompositions correspond to presentation of some weight $\mu$ as he sum
$\mu=\gamma_i+\delta_i$ where $\gamma_i$ is the weight corresponding to
$L_0-E_1-E_i$ (exceptional curve defined by the equation
$x_{\gamma_i}=0$), and $\delta_i$
is the weight corresponding to $E_i$ (with the equation $x_{\delta_i}=0$). The linear system  $|L_0-E_1|$ has the dimension
 $1$ and the considered decompositions correspond to the reducible elements of this linear system. There linear combination
$$p_\mu(s^{'}s^{-1}x)=\sum \limits_{\pi-\gamma-\delta=\mu}x_{\gamma}x_{\delta}\frac{s^{'}_{\gamma}s^{'}_{\delta}}
{s_{\gamma}s_{\delta}}(e_{-\gamma}e_{-\delta}v_{\pi_\alpha})=0,$$
defines some element of this linear system  (or a zero element) passing through  $e_{\rk\Delta}$. Thus we get that
 $p_\mu(s^{'}s^{-1}x)=0$ defines a curve $C$.

Let us rewrite $p_\mu(s^{'}s^{-1}x)=0$ as a polynomial on  $s^{'}$
$$\widetilde{p}_\mu(s^{'})=p_\mu(s^{'}s^{-1}x)=\sum
\limits_{\pi-\gamma-\delta=\mu}p_{\gamma\delta}\frac{x_{\gamma}x_{\delta}}
{s_{\gamma}s_{\delta}}s^{'}_{\gamma}s^{'}_{\delta}=0,$$

 Now let us prove nontriviality of the restriction
$p_\mu(s^{'}s^{-1}x)=0$ on the torsor $\mathcal T^{\, '}$ for
a general point  $s^{'}\in G^{'}/P^{'}$. Assume the contrary, then the restriction
$p_\mu(s^{'}s^{-1}x)|_{\mathcal T^{\, '}}$ is trivial for all
$s^{'}\in G^{'}/P^{'}$,  in other words
$\widetilde{p}_\mu(s^{'}) \in \mathcal J_{G^{'}/P^{'}}$ for all
$x\in {\mathcal T^{\, '}}$. It is easy to see that for
$\widetilde{p}_\mu(s^{'}) \in \mathcal J_{G^{'}/P^{'}}(\mu)$ all monomials from
 $\widetilde{p}_\mu(s^{'})$ have the form
$s^{'}_{\gamma}s^{'}_{\delta}$ for $\pi-\gamma-\delta=\mu$
(where the weight $\mu$ is considered as the $T^{'}$).  Since $ \mathcal
J_{G^{'}/P^{'}}(\mu)$ is generated by the equation $p_\mu(s^{'})=0$, we get that the set of coefficients
$\{p_{\gamma\delta}\frac{x_{\gamma}x_{\delta}}
{s_{\gamma}s_{\delta}}\}$ is proportional to the set
$\{p_{\gamma\delta}\}$. Thus for any pair
$\gamma_1,\delta_1$ and $\gamma_2,\delta_2$, such that
$\pi-\gamma_1-\delta_1=\mu$ и $\pi-\gamma_2-\delta_2=\mu$ we have:
$$\frac{x_{\gamma_1}x_{\delta_1}}
{s_{\gamma_1}s_{\delta_1}}=\frac{x_{\gamma_2}x_{\delta_2}}
{s_{\gamma_2}s_{\delta_2}}.\eqno{(*)}$$

Let us recall that equations $x_\omega=0$ define $(-1)$-curves on
$X_{\Delta^{'}}$. Let us choose some point on the  $(-1)$-curve
defined by $x_{\gamma_1}=0$  and not lying on the other $(-1)$-curves.
Let us take a point in its preimage in the torsor $\mathcal T^{\, '}$ with the coordinates
 $\{x^0_\omega\}$.  Then $x^0_{\gamma_1}=0$, but also
$x^0_{\gamma_2}\neq 0$ and $x^0_{\delta_2}\neq 0$ that contradicts with the equality $(*)$.
That proves the assertion.

Let us give a second proof of this fact and its generalization in the case of the cubic curve  $q_\nu(x)$.

Assume that claim  $1)$ is not true.  Then the restrictions
 $\widetilde{p}_\mu(s^{'})=p_\mu(s^{'}s^{-1}x)$ (corr.
$\widetilde{q}_\nu(s^{'})=q_\nu(s^{'}s^{-1}x)$) for $x\in\mathcal
T^{\, '}$ are identically equal to zero on  $G^{' }/P^{'}$. Consider the restriction
  $\widetilde{p}_\mu(s^{'})$
($\widetilde{q}_\nu(s^{'})$) on the open cell
$P_u^{'-}P^{'}/P^{'}=  \exp(\sum \limits_{\gamma \in
\Delta(\pp_u^{'})}c_\gamma e_{-\gamma})v_{\pi_\alpha-\alpha}$.
Then $\widetilde{p}_\mu(s^{'})$ (corr.
$\widetilde{q}_\nu(s^{'})$) are the polynomials (on the open cell identified
with the linear space) from the coordinates
 $c_\gamma$ where $\gamma \in \Delta(\pp_u^{'})$. We shall show that for some point
 $x\in \mathcal T^{\, '}$ for the polynomial $\widetilde{p}_\mu(s^{'})$ (соот.
$\widetilde{q}_\nu(s^{'})$) the coefficient by some monomial on the coordinates
   $c_\gamma$ is not equal to zero. That implies nontriviality of
$\widetilde{p}_\mu(s^{'})$ (corr. $\widetilde{q}_\nu(s^{'})$) as a polynomial on $s^{'}$.

Let us fix the weight bases  $\{v_\omega\}$ and $\{v^{'}_\omega\}$
of the subspace $V(\pi_\alpha)$ and $V_1$. Let us notice that we do not assume that these bases are concordant.
The coefficients of the polynomials
$\widetilde{p}_\mu(s^{'})$ (corr. $\widetilde{q}_\nu(s^{'})$)
we choose with respect to the basis   $\{v_\omega\}$. In other words
$$v_{\pi_\alpha-\gamma}=e_{-\gamma}v_{\pi_\alpha}, \text{где} \ \gamma\in \Delta_{\pp_u},$$
$$e_{-\gamma}e_{-\delta}v_{\pi_\alpha}=p_{\gamma\delta}v_\mu, \text{где} \  \pi-\gamma-\delta=\mu,$$
$$e_{-\mu_1}e_{-\mu_2}e_{-\mu_3}v_{\pi_\alpha}=q_{\mu_1\mu_2\mu_3}v_\nu,\text{где} \ \pi_\alpha-\mu_1-\mu_2-\mu_3=\nu.$$
Correspodingly for the subspace $V_1$ we have:
$$v^{'}_{(\pi_\alpha-\alpha)-\gamma^{'}}=e_{-\gamma^{'}}(e_{-\alpha}v_{\pi_\alpha})=n_{\gamma^{'}}
v_{\pi_\alpha-\gamma}, \text{где} \ \gamma^{'}\in
\Delta_{\pp^{'}_u}, \gamma=\alpha+\gamma^{'},$$
$$e_{-\gamma^{'}}e_{-\delta^{'}}(e_{-\alpha}v_{\pi_\alpha})=p^{'}_{\gamma^{'}\delta^{'}}v^{'}_{\mu^{'}}, \text{где} \
 (\pi_\alpha-\alpha)-\gamma^{'}-\delta^{'}=\mu^{'}.$$
\begin{lemma}\label{[ee]}
We have the equality
$[e_{-\alpha},e_{-\gamma^{'}}]=-n_{\gamma}e_{-\gamma}$, где
$\gamma^{'}\in \Delta_{\pp^{'}_u}$, а $\gamma=\alpha+\gamma^{'}$.
\end{lemma}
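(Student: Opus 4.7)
The plan is to determine the commutator by comparing its action on the highest weight vector $v_{\pi_\alpha}$, which is the only relevant vector since both sides of the claimed identity are root vectors. Since $\gamma'\in\Delta_{\pp'_u}\subset\Delta'$ and $\gamma=\alpha+\gamma'$, the bracket $[e_{-\alpha},e_{-\gamma'}]$ is homogeneous of weight $-\alpha-\gamma'=-\gamma$, so it lies in the one-dimensional root space $\gg_{-\gamma}$ (the weight $-\gamma$ is indeed a root, since $\gamma=\alpha+\gamma'\in\Delta_{\pp_u}$ as will follow from the calculation below, given that $\pi_\alpha-\gamma$ is a weight of $V_1$ by the labeling $v'_{(\pi_\alpha-\alpha)-\gamma'}=n_{\gamma'}v_{\pi_\alpha-\gamma}$). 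Thus $[e_{-\alpha},e_{-\gamma'}]=c\,e_{-\gamma}$ for some scalar $c\in\Bbb K$ which we must compute.

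The key observation is that $e_{-\gamma'}v_{\pi_\alpha}=0$. Indeed, a potential nonzero image would have weight $\pi_\alpha-\gamma'$, and since $\gamma'\in\Delta'$ is orthogonal to $\pi_\alpha$ one has $(\pi_\alpha;\pi_\alpha-\gamma')=(\pi_\alpha;\pi_\alpha)$, so that $\pi_\alpha-\gamma'$ would have to lie in the graded component $V_0=\langle v_{\pi_\alpha}\rangle$; but $\gamma'\neq 0$, so this is impossible.

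Applying both sides of $[e_{-\alpha},e_{-\gamma'}]=c\,e_{-\gamma}$ to $v_{\pi_\alpha}$ and using $e_{-\gamma'}v_{\pi_\alpha}=0$, we get
\[
 -e_{-\gamma'}e_{-\alpha}v_{\pi_\alpha}=c\,e_{-\gamma}v_{\pi_\alpha}.
\]
By the basis conventions fixed just before the lemma, the left hand side equals $-n_{\gamma'}v_{\pi_\alpha-\gamma}$ and the right hand side equals $c\,v_{\pi_\alpha-\gamma}$. Since the weight space $V(\pi_\alpha)_{\pi_\alpha-\gamma}$ is one dimensional (it belongs to the miniscule component $V_1$), comparing coefficients yields $c=-n_{\gamma'}$, which is the claimed identity with the shorthand $n_\gamma:=n_{\gamma'}$ used in the statement.

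The whole argument is essentially structural; the only point requiring care is the verification that $e_{-\gamma'}v_{\pi_\alpha}=0$, which is the main obstacle in the sense that without it one could not recover $c$ purely from the action on the highest weight vector. This vanishing is precisely the reason for restricting $\gamma'$ to lie in $\Delta_{\pp'_u}\subset\Delta'$, where the orthogonality $(\pi_\alpha;\gamma')=0$ holds.
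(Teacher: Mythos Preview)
Your proof is correct and follows essentially the same approach as the paper: write $[e_{-\alpha},e_{-\gamma'}]=c\,e_{-\gamma}$, apply both sides to $v_{\pi_\alpha}$, use $e_{-\gamma'}v_{\pi_\alpha}=0$, and read off $c=-n_{\gamma'}$ from the basis conventions. Your justification of $e_{-\gamma'}v_{\pi_\alpha}=0$ via the grading (using $(\pi_\alpha;\gamma')=0$) is in fact cleaner than the paper's, which contains apparent typos at this step (it writes $\langle\alpha;\gamma'\rangle=0$ and $e_{\gamma'}v_{\pi_\alpha}=0$, neither of which is the relevant statement).
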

\begin{proof} Since $\gamma^{'}\in
\Delta_{\pp^{'}_u}$ we have $\langle\alpha; \gamma^{'}\rangle=0$,
that implies $e_{\gamma^{'}}v_{\pi_\alpha}=0$. From the chain of equalities
$$ce_{-\gamma}v_{\pi_\alpha}=[e_{-\alpha},e_{-\gamma^{'}}]v_{\pi_\alpha}
=(e_{-\alpha}e_{-\gamma^{'}}-e_{-\gamma^{'}}e_{-\alpha})v_{\pi_\alpha}=-e_{-\gamma^{'}}e_{-\alpha}v_{\pi_\alpha}
=-n_{\gamma^{'}}e_{-\gamma}v_{\pi_\alpha}$$ we get that
$c=-n_{\gamma^{'}}$.
\end{proof}

Without the loss of generality we can assume that the  $(-1)$-curve
that correspond to $v_{\pi_\alpha-\alpha}=0$ is $E_{\rk
\Delta^{'}}$, and one of the monomials  $p_{\mu}$ is equal to $s^{'}_\alpha
s^{'}_\omega$ where $\omega=L-E_1-E_{\rk \Delta^{'}}$. Let us recall that we consider the points
 $s^{'}=\langle v\rangle$ that belong to the open cell i.e.

$$v=v_{\pi_\alpha-\alpha} +
\frac{\sum_{\gamma\in \Delta_{\pp^{'}_u}} c_\gamma
e_{-\gamma}}{1!}v_{\pi_\alpha-\alpha}+\frac{(\sum_{\gamma\in
\Delta_{\pp^{'}_u}} c_\gamma
e_{-\gamma})^2}{2!}v_{\pi_\alpha-\alpha}+\ldots.$$

 Consider the curves $s^{'}_\gamma=0$ and $s^{'}_\delta=0$, where
 $\pi_\alpha-\gamma=L-E_1-E_2$ and $\pi_\alpha-\delta=E_2$ (the weights $\pi_\alpha-\alpha-\gamma^{'}$ and $\pi_\alpha-\alpha-\delta^{'}$
 as the weights of module $V_1$ correspond to the curves  $L-E_1-E_2$ and $E_2$. But we consider them as the curves on the del Pezzo
 surface where the curve $E_{\rk \Delta^{'}}$ is contracted).
It is easy to check (by a direct calculation with weights or by considering all the
curves in the linear systems to which belong the curves
$L-E_1-E_2$ and $E_2$) that only  $s_{\gamma^{'}}s_{\delta^{'}}$ and
$s_{\alpha}^{'}s_{\omega}^{'}$ for $\omega=\gamma+\delta-\alpha$,
contain the monomial $c_\gamma c_\delta$.

We obtain that
$v_{\pi_\alpha-\omega}=p^{'}_{\gamma^{'}\delta^{'}}c_{\gamma^{'}}c_{\delta^{'}}+\ldots$,
where we omitted the monomials that not containing  $c_{\gamma^{'}}$ and $c_{\delta^{'}}$
(here we used that  $e_{\gamma_i }$ for
$\gamma_i\in \Delta_{\pp^{'}_u}$ commutes pairwise).
 Then the coefficient by $c_\gamma c_\delta$
in the polynomial $\widetilde{p}_\mu(s^{'})$ is equal to

$$p^{'}_{\gamma^{'}\delta^{'}}p_{\alpha\omega}\frac{x_\alpha x_{\omega}}{s_\alpha s_{\omega}}+
n^{'}_{\gamma}n^{'}_{\delta}p_{\gamma \delta}\frac{x_\gamma
x_\delta}{s_\gamma s_\delta}. \eqno{(*)}$$

Let us note that $x_\alpha x_{\omega}$ and $x_\gamma x_\delta$ are linearly independent in the linear
system of conics on the del Pezzo surface
$X_{\Delta^{'}}$ (By contracting some  (-1)-curves we can obtain that these monomials define a pair  of lines
that are the elements of the linear system of conics passing through $4$
fixed points $e_{i_1},e_{i_2},e_{i_3},e_{i_4}$) thus the equation $(*)$ is not identically zero on the del Pezzo surface.

Let us prove the nontriviality of $\widetilde{q}_\nu(s^{'})$. In the case of
 $E_8$ without loss of generality we can assume that the cubic is of the form
  $C=L_0-E_1-E_2-E_3-E_4-E_5-E_7-2e_{\rk{\Delta}}$ (this can be obtained by the contraction $X_{\rk
\Delta^{'}}\longrightarrow \Bbb P^2$ since the Weyl group
$W^{'}$ is acting transitively on the weights of $V_3$ corresponding to the considered cubics).
In the case of  $E_7$ we consider the surface obtained by blowing up points $e=i$ for $i=1\ldots 7$,
$i\neq 6$. In this case such cubic is unique. Let us fix the
$(-1)$-curves $H_\alpha=E_7$, $H_{\mu}=E_2$,
$H_{\nu}=L_0-E_1-E_2$, $H_{\delta}=E_1$,
$H_{\gamma}=2L_0-E_1-E_2-E_3-E_4-E_5$. The required cubic can be represented as

$C=L_0-E_1-E_2-E_3-E_4-E_5-E_7-2e_{\rk{\Delta}}=E_7+(E_2+(L_0-E_1-E_2))
+(E_1+(2L_0-E_1-E_2-E_3-E_4-E_5)).$

\begin{center}
  \epsfig{file=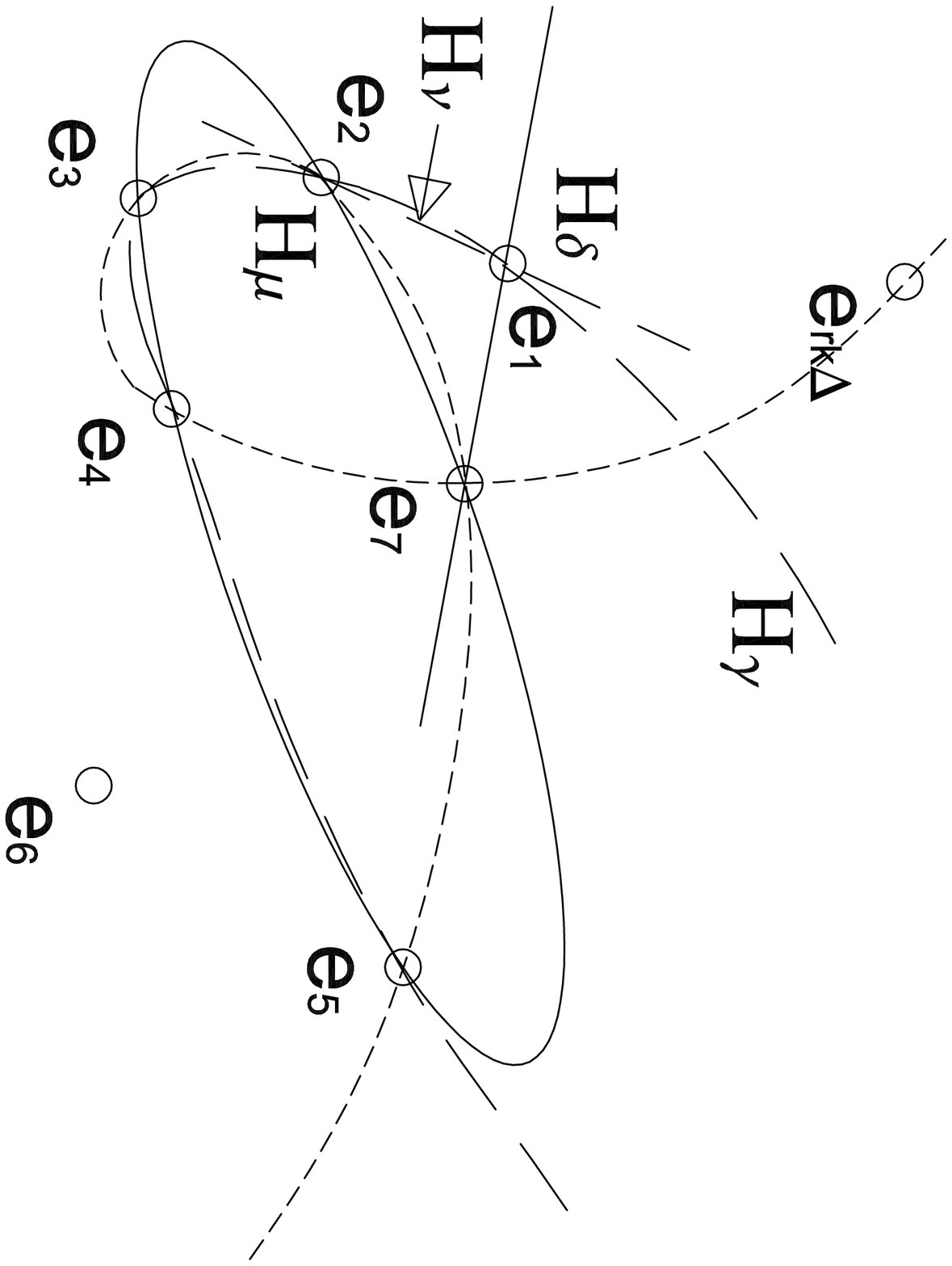,height=8cm,angle=90,clip=}

fig.4
\end{center}
Consider the expressions
 $s^{'}_{\vartheta_1}s^{'}_{\vartheta_2}s^{'}_{\vartheta_3}$,
from the variables  $\{c_\vartheta\}_{\vartheta \in \pp^{'}_u}$ and let use find those
that contain the monomial $c_\mu c_\nu c_\gamma c_\delta$.
 Let us notice that  $s^{'}_{\vartheta_i}$ define some linear system; more precisely
 when  $s^{'}_{\vartheta_i}=n^{'}_\zeta
 c_\zeta$ (где $\zeta \in \pp^{'}_u$) it is a curve from $X_{\Delta^{'}}$  corresponding to the weight
 $\pi-\alpha-\zeta$ (the same curve can be considered on $X_{\Delta^{''}}$,
 but then it will correspond to the weight $\pi_\beta^{'}-\zeta$). If $s^{'}_{\vartheta_i}=\sum
\limits_{\zeta_1+\zeta_2=\gamma+\delta-2\alpha}
p^{'}_{\zeta_1\zeta_2}c_{\zeta_1}c_{\zeta_2}$ (for
$\zeta_1,\zeta_1 \in \pp^{'}_u$) then the curve
$s^{'}_{\vartheta_i}=0$ considered as the curve on
$X_{\Delta^{''}}$ lie in the linear system generated by the pairs of
$(-1)$-curves $c_{\zeta_1}c_{\zeta_2}=0$ for all $\zeta_1,\zeta_1
\in \pp^{'}_u$ (the argument is analogous in the case when
$s^{'}_{\vartheta_i}$ is the cubic from the variables  $c_\zeta$).

 To do this we have to write down all linear systems corresponding to
 $s^{'}_{\vartheta_1},s^{'}_{\vartheta_2},s^{'}_{\vartheta_3}$,
 such that each linear system corresponding to  $s^{'}_{\vartheta_i}$
 contain the monomial that is a submonomial in  $c_\mu c_\nu c_\gamma
 c_\delta$, and the product of such monomials is equal to $c_\mu c_\nu c_\gamma
 c_\delta$.

All possible generators
$s^{'}_{\vartheta_1}s^{'}_{\vartheta_2}s^{'}_{\vartheta_3}$
are given on fig.5. (For the brevity on fig.5, by the sum $\sum
c_\gamma c_\delta$, we mean a sum of type $\sum
\limits_{\zeta_1+\zeta_2=\gamma+\delta-2\alpha}
p^{'}_{\zeta_1\zeta_2}c_{\zeta_1}c_{\zeta_2}$ in which we can find monomial
 $c_\gamma c_\delta$).

Now it is not difficult to calculate the coefficient by the monomial $c_\mu c_\nu
c_\gamma c_\delta$:

$$q_{\alpha(\mu\nu)(\delta\gamma)}p^{'}_{\delta^{'}\gamma^{'}}p^{'}_{\mu^{'}\nu^{'}}
\frac{x_\alpha x_{(\mu\nu)}x_{(\delta\gamma)}}{s_\alpha
s_{(\mu\nu)}s_{(\delta\gamma)}}+
q_{\alpha(\mu\gamma)(\nu\delta)}p^{'}_{\nu^{'}\delta^{'}}p^{'}_{\mu^{'}\gamma^{'}}
\frac{x_\alpha x_{(\mu\gamma)}x_{(\nu\delta)}}{s_\alpha
s_{(\mu\gamma)}s_{(\nu\delta)}}+$$

$$q_{\mu\gamma(\nu\delta)}p^{'}_{\nu^{'}\delta^{'}}n_{\mu^{'}}n_{\gamma^{'}}
\frac{x_\mu x_{\gamma}x_{(\nu\delta)}}{s_\mu
s_{\gamma}s_{(\nu\delta)}}+
q_{\mu\nu(\delta\gamma)}p^{'}_{\delta^{'}\gamma^{'}}n_{\mu^{'}}n_{\nu^{'}}
\frac{x_\mu x_{\nu}x_{(\delta\gamma)}}{s_\mu
s_{\nu}s_{(\delta\gamma)}}+$$

$$q_{\delta\nu(\mu\gamma)}p^{'}_{\mu^{'}\gamma^{'}}n_{\delta^{'}}n_{\nu^{'}}
\frac{x_\delta x_{\nu}x_{(\mu\gamma)}}{s_\delta
s_{\nu}s_{(\mu\gamma)}}+
q_{\delta\gamma(\nu\mu)}p^{'}_{\nu^{'}\mu^{'}}n_{\delta^{'}}n_{\gamma^{'}}
\frac{x_\delta x_{\gamma}x_{(\nu\mu)}}{s_\delta
s_{\gamma}s_{(\nu\mu)}}\eqno (**)
$$
 (for brevity by
$(\zeta_1\zeta_2)$ we mean the weight $\zeta_1+\zeta_2-\alpha$)

The coefficients of the fractions are equal to  $(\pm1)$. To prove this let us notice
that $e_{-\vartheta}$ for $\vartheta\in
\pp_u$ commutes pairwise  ($[e_{-\vartheta_1};e_{-\vartheta_2}]\neq 0$ only in the case of $E_8$,
but in this case they do not commute only when
$\vartheta_1+\vartheta_2=\pi_\alpha$, that is not the case since such curves
correspond to the cubic curves passing through the points $e_1\ldots e_7$).
Any two monomials in the expression   (**)
have a common variable $x_\vartheta$. Let us do one of the calculations that are equivalent to each other:

$$q_{\mu\gamma(\nu\delta)}v_{(\mu\gamma\nu\delta)}=e_{-(\nu\delta)}e_{-\mu}e_{-\gamma}v_{\pi_\alpha}=
n^{-1}_{\gamma}n^{-1}_{\mu}e_{-(\nu\delta)}[e_{-\alpha},e_{-\mu^{'}}][e_{-\alpha},e_{-\gamma^{'}}]v_{\pi_\alpha}=$$
$$
n^{-1}_{\gamma}n^{-1}_{\mu}e_{-(\nu\delta)}(e_{-\alpha}e_{-\mu^{'}}-e_{-\mu^{'}}e_{-\alpha})
(e_{-\alpha}e_{-\gamma^{'}}-e_{-\gamma^{'}}e_{-\alpha})v_{\pi_\alpha}=$$
$$
-n^{-1}_{\gamma}n^{-1}_{\mu}e_{-(\nu\delta)}e_{-\alpha}e_{-\mu^{'}}e_{-\gamma^{'}}e_{-\alpha}v_{\pi_\alpha}=
-n^{-1}_{\gamma}n^{-1}_{\mu}p^{'}_{\mu^{'}\gamma^{'}}e_{-\alpha}e_{-(\nu\delta)}e_{-(\mu\gamma)}v_{\pi_\alpha}=$$
$$=-n^{-1}_{\gamma}n^{-1}_{\mu}p^{'}_{\mu^{'}\gamma^{'}}q_{-\alpha(\nu\delta)(\mu\gamma)}v_{(\mu\gamma\nu\delta)}$$
where we used that $e_{-\gamma^{'}}v_{\pi}=0$,
$e_{-\alpha}e_{-\gamma}v_{\pi}=0$ and applied Lemma \ref{[ee]}.

\begin{center}
  \epsfig{file=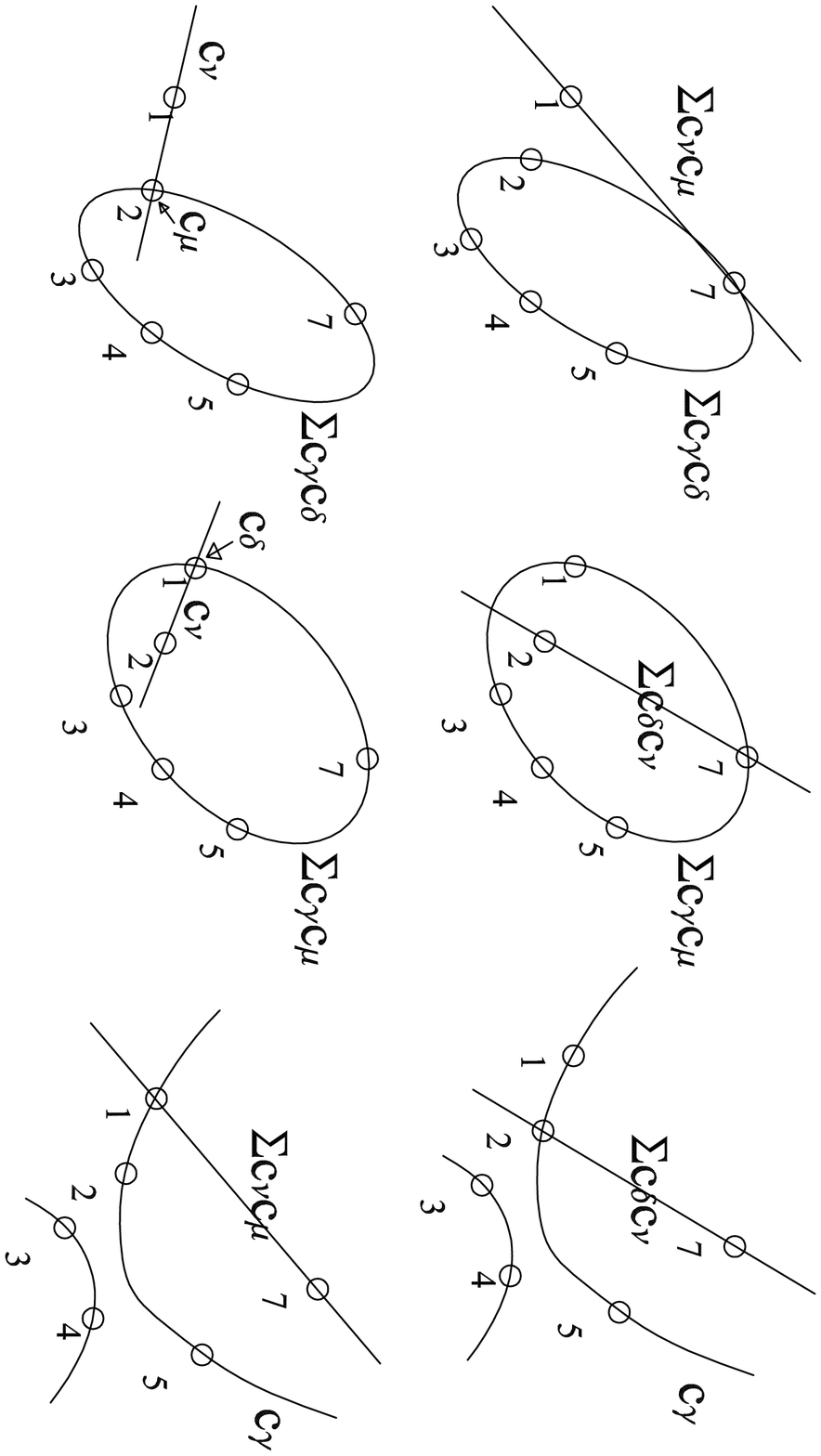,height=12cm,angle=90,clip=}

fig.5
\end{center}

From the following elementary lemma it follows that the expression   $(**)$ cannot be identically equal to zero.

\begin{lemma} Consider the points
$e_i$ that are in the general position. Let $l_{ij}$ be a line passing through
the points $e_i, e_j$ and $q_{ij}$ is a conic passing through the points
 $e_i, e_j, e_3, e_4, e_5$. $E_i$ as before denote the exceptional curves
that are the preimages of $e_i$.
Let us normalize the equations in such way that $l_{ij}(e_{\rk
\Delta})=q_{ij}(e_{\rk \Delta})=1$. Then the restriction of the expression
$$l_{12}q_{26}E_2-l_{12}q_{16}E_1+l_{16}q_{12}E_1-l_{16}q_{26}E_6+l_{26}q_{16}E_6-l_{26}q_{12}E_2$$
on the curve $l_{\Delta}=L-E_1-e_{\rk \Delta}$ is not equal to zero
(let us note that the line $l_{\Delta}$ considered as the curve on
$X_{\Delta^{'}}$ has the degree $2$ with respect to $K_{\Delta^{'}}$).
\end{lemma}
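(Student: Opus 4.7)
The plan is to reduce the statement to an explicit one-variable computation on $l_\Delta \cong \Bbb P^1$. Choose an affine coordinate $x$ on $l_\Delta$ with $e_{\rk\Delta}$ at $x=0$ and $e_1$ at $x=1$. Viewing $l_{ij}$, $q_{ij}$, and $E_k$ as the distinguished sections of the $(-1)$-curves of classes $L-E_i-E_j$, $2L-E_i-E_j-E_3-E_4-E_5$, and $E_k$ respectively, all six monomials of the expression belong to the same divisor class $3L-E_1-E_2-E_3-E_4-E_5-E_6$; so $F$ is a well-defined section of this line bundle on $X_\Delta$, and its restriction to $l_\Delta$ lies in a degree-$2$ line bundle on $\Bbb P^1$.

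The crucial structural observation is the determinantal identity
\[
F \;=\; \det\!\begin{pmatrix} E_1 & E_2 & E_6 \\ l_{26} & l_{16} & l_{12} \\ q_{26} & q_{16} & q_{12} \end{pmatrix},
\]
which one verifies by expanding along the first row and matching the six signed monomials. To compute the restriction, use intersection numbers: the strict transforms $\tilde l_{12}$ and $\tilde l_{16}$ are disjoint from $\tilde l_\Delta$, and $l_\Delta$ avoids $E_2, E_6$, so under the given normalizations (together with the scalar choice $E_k(e_{\rk\Delta})=1$) all four of these sections restrict to the constant $1$ on $l_\Delta$. The four remaining entries restrict to linear polynomials in $x$ equal to $1$ at $x=0$: namely $E_1|_{l_\Delta}=1-x$ (the second zero being at $\tilde l_\Delta \cap E_1$, i.e.\ $x=1$), while $\tilde l_{26}$, $\tilde q_{12}$, $\tilde q_{16}$ restrict to $1-x/y_0$, $1-x/y_1$, $1-x/y_2$ for generic parameters $y_0, y_1, y_2 \in \Bbb K\setminus\{0,1\}$ coming from the residual intersections with $l_\Delta$ in $\Bbb P^2$.

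After these substitutions the entries in columns $2$ and $3$ of rows $1$ and $2$ are identically $1$; the column operation $C_2\mapsto C_2-C_3$ kills those entries, and expansion of the $3\times 3$ determinant along the modified column collapses to a single product:
\[
F|_{l_\Delta}(x) \;=\; (\tilde q_{12}|_{l_\Delta}-\tilde q_{16}|_{l_\Delta})\cdot(E_1|_{l_\Delta}-\tilde l_{26}|_{l_\Delta}) \;=\; \frac{(1-y_0)(y_1-y_2)}{y_0\, y_1\, y_2}\,x^2.
\]
For $e_i$ in general position one has $y_0\neq 0,1$ and $y_1\neq y_2$, so the coefficient is nonzero and $F|_{l_\Delta}$ is a nontrivial degree-$2$ section, vanishing to order $2$ at $e_{\rk\Delta}$ (as expected, since all sections have been normalized to take value $1$ there).

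The expected main obstacle is the line-bundle bookkeeping: identifying the six monomials as sections of one common line bundle on the blown-up surface (forcing the strict-transform interpretation of $l_{ij}$ and $q_{ij}$) and carefully tracking the normalization of the exceptional sections $E_k$. Once this setup is fixed the verification is a one-step determinant simplification, with the quadratic $\tilde q_{26}$ dropping out in the cancellation between the first and fourth monomials, and genericity of the $e_i$ ensuring that the surviving coefficient does not vanish.
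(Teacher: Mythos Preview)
Your argument is correct and follows essentially the same route as the paper's own proof: restrict each factor to $l_\Delta$, observe that $l_{12},l_{16},E_2,E_6$ become nonvanishing constants (so the two monomials containing $q_{26}$ cancel), and compute the remaining four terms explicitly to obtain a multiple of $x^2$ whose coefficient is nonzero in general position. The paper carries this out directly, writing the four surviving monomials as normalized linear factors $(x-a)/(-a)$ etc.\ and simplifying to $\frac{(b-d)(c-a)}{abcd}x^2$, which is your formula with $a=1$, $c=y_0$, $b=y_2$, $d=y_1$; your determinantal identity and column operation $C_2\mapsto C_2-C_3$ are a clean repackaging of the same cancellation and two-by-two computation. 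The interpretation of the nondegeneracy conditions ($y_0\neq 1$ meaning $e_1,e_2,e_6$ not collinear; $y_1\neq y_2$ forcing $q_{12}=q_{16}$ and hence six points on a conic) also agrees with the paper.
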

\begin{proof}
Let us notice that the curve  $l_{\Delta}$ intersect each curve
$E_1,q_{16},l_{26},q_{12}$ in a single point that we denote by
 $a,b,c$ and $d$ correspondingly, and with the curve $q_{26}$ in two points (see fig.7).

\begin{center}
  \epsfig{file=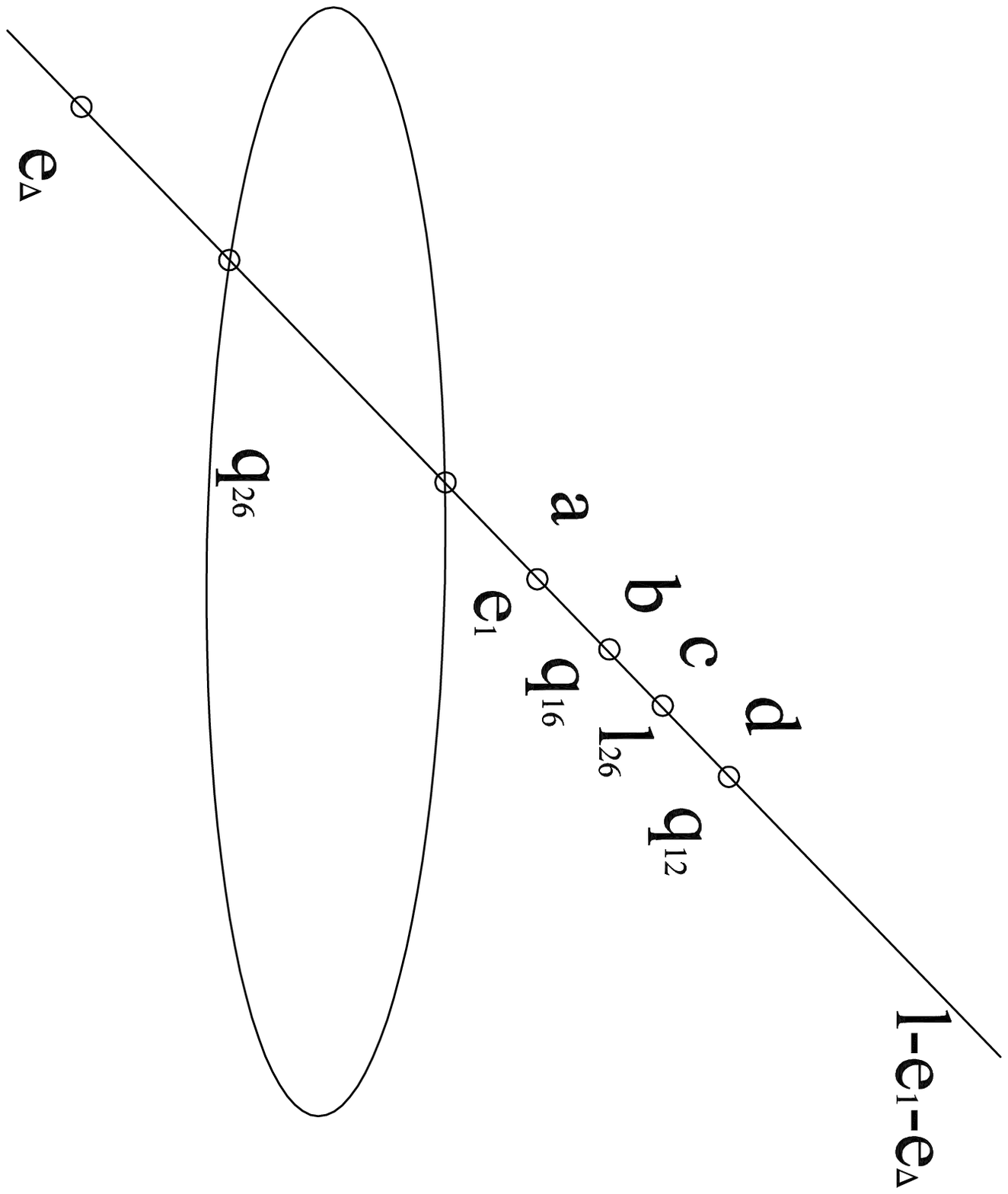,height=6cm,angle=90,clip=}

fig.6
\end{center}

 Since the lines  $l_{12}$ and $l_{16}$ do not intersect  $l_{\Delta}$ the curves $l_{16}q_{26}E_6$ and
$l_{12}q_{26}E_2$ define the same section on the line bundle  $\mathcal
O(2)$ on the curve $l_{\Delta}$, that is also equal to
$q_{26}|_{l_{\Delta}}$. Indeed all three curves intersect
${l_{\Delta}}$ in two fixed points and all the three sections are equal to  $1$ in the point
 $e_{\rk \Delta}$. From this we obtain that
 $(l_{16}q_{26}E_6-l_{12}q_{26}E_2)|_{l_{\Delta}}=0$. Without the loss of generality we assume that all points
lie in the affine chart $x=0$. Then from the normalizing condition in the point  $0$ (i.e.
$l_{ij}(e_{\rk \Delta})=q_{ij}(e_{\rk \Delta})=1$) our expression can be written as
$$-l_{12}q_{16}E_1+l_{26}q_{16}E_6-l_{26}q_{12}E_2+l_{16}q_{12}E_1=$$
$$-\frac{1}{ab}(x-a)(x-b)+\frac{1}{bc}(x-b)(x-c)-\frac{1}{cd}(x-c)(x-d)+\frac{1}{da}(x-d)(x-a)=$$
$$= \frac{(b-d)(c-a)}{abcd}x^2,
$$
thus we get that the last expression is equal to zero iff   $a=c$ or $b=d$. But the first means that the points
 $e_1,e_2,e_3$ lie on the same line  and the second states that the conics $q_{12}$ and $q_{16}$ intersect in $5$
points that implies that  $6$ belong to the same conic that contradicts the generality of position.\end{proof}

The assertion $2)$ from the Proposition \ref{P-Q} follows easily from the fact that
 $p_{\mu}(x)=0$ for $\mu \in supp(V_1)$ defines
$G^{'}/P^{'}$, and from the fact that on the quotient
$X_{\Delta^{'}}=T\backslash \!\! \backslash s^{'}s^{-1}\mathcal T^{\, '}$ these
equations define conics intersecting transversally in a single point  $e_{\rk
\Delta}$.
\end{proof}

Let us prove the following proposition

\begin{proposition}\label{blowup} Let  $\mathcal T=p_{\pi_{\alpha}}^{-1}\sigma_{0*}^{-1}(s^{'}s^{-1}\mathcal T^{\,
 '})$ be a composition of proper transform of $s^{'}s^{-1}\mathcal T^{\,
 '}$ with respect to the blow up $\sigma_0$ and preimage of the quotient  morphism
  $p_{\pi_{\alpha}}$. Then the quotient $T\backslash \!\!\backslash\mathcal T$ is the del Pezzo
  surface  $X_{\Delta}$ obtained by the blow up of del Pezzo surface
  $X_{\Delta^{'}}$ in the point $e_{\rk_\Delta}=p_{T^{'}}(s)$.
\end{proposition}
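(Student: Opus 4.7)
The plan is to factor the maximal torus as $T=T^{'}\cdot\lambda$ and take the $T$-quotient in two successive steps. Since $p_{\pi_\alpha}$ is by definition the $\lambda$-quotient morphism, $\lambda\backslash\!\!\backslash\mathcal T=\sigma_{0*}^{-1}(s^{'}s^{-1}\mathcal T^{\,'})=:\widetilde{\mathcal T}$ by construction, and it remains to identify $T^{'}\backslash\!\!\backslash\widetilde{\mathcal T}$ with the blow up of $X_{\Delta^{'}}$ at the point $e_{\rk\Delta}=\tau^{'}(s)$. Throughout, $T^{'}$ commutes with $\lambda$, acts on $\Bbb P(V_1)$ preserving $G^{'}/P^{'}$ and $s^{'}s^{-1}\mathcal T^{\,'}$, and lifts equivariantly to the (weighted) blow up $\widetilde X$.

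First I would invoke Theorem \ref{P-Q}(2) to pin down the set-theoretic intersection $s^{'}s^{-1}\mathcal T^{\,'}\cap G^{'}/P^{'}=T^{'}s^{'}$ as a single $T^{'}$-orbit lying over $e_{\rk\Delta}$ on $X_{\Delta^{'}}$. The key technical step is to verify that for sufficiently general $s^{'}$ this intersection is scheme-theoretically transverse along $T^{'}s^{'}$: the quadrics $p_\mu$ from Remark \ref{equation} generate $\mathcal J_{G^{'}/P^{'}}$, and by Theorem \ref{P-Q}(1) their restrictions to $s^{'}s^{-1}\mathcal T^{\,'}$ are nontrivial Cartier divisors whose $T^{'}$-quotients are the reduced $(-1)$-curves through $e_{\rk\Delta}$. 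Their differentials therefore span the conormal space to $T^{'}s^{'}$ inside $s^{'}s^{-1}\mathcal T^{\,'}$ along the whole orbit, which is the required transversality statement.

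Given transversality, the universal property of the blow up identifies the proper transform $\widetilde{\mathcal T}$ with the blow up of $s^{'}s^{-1}\mathcal T^{\,'}$ along the $T^{'}$-orbit $T^{'}s^{'}$, and its exceptional divisor is $D_{\mathcal N}\cap\widetilde{\mathcal T}$ (respectively its analogue from Proposition \ref{bl} in the $E_8$ case). Since $T^{'}$ acts freely on the torsor $s^{'}s^{-1}\mathcal T^{\,'}$ and the blow up centre $T^{'}s^{'}$ is a single free $T^{'}$-orbit with image the single point $e_{\rk\Delta}\in X_{\Delta^{'}}$, taking the $T^{'}$-quotient commutes with the blow up, and we conclude
$$
T\backslash\!\!\backslash\mathcal T \;=\; T^{'}\backslash\!\!\backslash\widetilde{\mathcal T} \;=\; \text{Bl}_{e_{\rk\Delta}}(X_{\Delta^{'}}) \;=\; X_\Delta.
$$

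The main obstacle is the scheme-theoretic transversality of $s^{'}s^{-1}\mathcal T^{\,'}\cap G^{'}/P^{'}$ along $T^{'}s^{'}$; this is where the genericity hypothesis on $s^{'}\in G^{'}/P^{'}$ is actually used, and once it is secured everything else reduces to the naturality of blow ups under free quotients. In the $E_8$ case one must substitute the weighted blow up of Proposition \ref{bl} for $\sigma_0$ and restrict to $(G/P)^{ss}_\varepsilon\cap D_{h_{\pi_\alpha}}\cap\mathcal U_\alpha$, but the weight-$2$ generator $Q_\alpha$ together with the unchanged weight-$1$ generators $P_{\pi_\alpha-\gamma}$ provide exactly the conormal structure needed for the same transversality argument to go through without essential modification.
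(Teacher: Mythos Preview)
Your proof is correct and follows the same route as the paper: both factor $T=T'\cdot\lambda$, identify $\lambda\backslash\!\!\backslash\mathcal T$ with the proper transform $\widetilde{\mathcal T}\subset Bl(\Bbb P(V_1),G'/P')$, and reduce everything to showing that the pullback of $\mathcal J_{G'/P'}$ to $s's^{-1}\mathcal T'$ cuts out the orbit $T's'$ with its reduced scheme structure. What you call scheme-theoretic transversality the paper phrases as the ideal $\mathcal J_{e_{\rk\Delta}}=(\mathcal J_{G'/P'}|_{s's^{-1}\mathcal T'})^{T'}$ being the maximal ideal of $e_{\rk\Delta}$; your ``blow up commutes with free $T'$-quotient'' is the conceptual form of the paper's explicit $T'$-invariant affine chart computation.

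Two minor remarks. First, your sentence ``their differentials therefore span the conormal space'' needs one more beat: knowing that each $p_\mu$ restricts to a reduced curve through $e_{\rk\Delta}$ gives nonzero differentials, but not yet linear independence of two of them. The paper closes this by naming two concrete members, $L-E_1-e_{\rk\Delta}$ and $L-E_2-e_{\rk\Delta}$, which visibly meet transversally at $e_{\rk\Delta}$ for points in general position. Second, your closing paragraph on $E_8$ is out of place here: Proposition~\ref{blowup} is stated only for the types where $\sigma_0$ is the ordinary blow up, and the weighted case is treated separately in Proposition~\ref{blowupE_8}, where the argument genuinely changes (one must analyse the weight-$2$ generator $q_\alpha$ and show the exceptional fibre is a smooth rational curve in $\Bbb P_{wt}(2,1,1)$ rather than simply invoking transversality).
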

\begin{proof} Consider the quotient $\widetilde{\mathcal T}=\lambda\backslash \!\!\backslash\mathcal T$.
It is easy to see that the proper transform of  $s^{'}s^{-1}\mathcal
T^{\, '}$ with respect to the blow up $Bl(\Bbb P(V_1),G^{'}/P^{'})$ of the space
$\Bbb P(V_1)$ in the flag subvariety $G^{'}/P^{'}$ (contraction morphism we denote by $\sigma_0$).

Consider the following commutative diagram
$$
\xymatrix{ \widetilde{X} \ar[d]_{\widetilde{\sigma}} &
\ar[l]^{p_{T^{'}}}\ar[r]\ar[d]_{\sigma} \widetilde{\mathcal T}&
\ar[d]_{\sigma_0}Bl(\Bbb P(V_1),G^{'}/P^{'}) &
 \\
X_{\Delta^{'}}       & \ar[l]^{p_{T^{'}}}  s^{'}s^{-1}\mathcal
T^{\, '}
   \ar[r]    & \Bbb P(V_1)        & \\
}
$$
Where the surface $\widetilde{X}$ is defined as the quotient
$T^{'}\backslash \!\!\backslash\widetilde{\mathcal T}$.
We also notice that since $s^{'}s^{-1}\mathcal T^{\, '}\cap
G^{'}/P^{'}=T^{'}s^{'}$ we have an isomorphism:
$$\widetilde{\mathcal T}\setminus \sigma_0^{-1}(T^{'}s^{'})=s^{'}s^{-1}\mathcal T^{\, '}\setminus T^{'}s^{'}. $$
Pssing to quotients we get the isomorphism
$$\widetilde{X}\setminus \widetilde{\sigma}^{-1}(e_{\rk \Delta})=X_{\Delta^{'}} \setminus (e_{\rk \Delta}).  $$

This implies that  $\widetilde{\sigma}$ is the composition
of the blow ups with the centers outside  $X_{\Delta^{'}} \setminus
(e_{\rk \Delta})$ or in other words   $\widetilde{X}$
is obtained by the blow up of $X_{\Delta^{'}}$ in the subscheme with the support in
$e_{\rk \Delta}$ (см. \cite{har}) defined by the ideal $\mathcal
J_{e_{\rk \Delta}}$. Our aim is to show that
$\widetilde{\sigma}=X_{\Delta}$ or in other words that the ideal
$\mathcal J_{e_{\rk \Delta}}$ is  maximal.
Applying th universal property of the blow up (cf.\cite[Prop. 7.14,
Corr. 7.15]{har}) to the right hand of the diagram we get that the ideal
 defining the blow up is equal to the restriction of the ideal  $\mathcal
J_{G^{'}/P^{'}}$ on $s^{'}s^{-1}\mathcal T^{\, '}$ in other words
$\mathcal J_{0}=\mathcal J_{G^{'}/P^{'}}|_{s^{'}s^{-1}\mathcal
T^{\, '}}$.

Let us notice that we can check our proposition locally. Consider the
$T^{'}$-invariant affine chart  $U=Spec(\mathcal A)$ on
$s^{'}s^{-1}\mathcal T^{\, '}$ containing the orbit $T^{'}s^{'}$.
The morphism $p_{T^{'}}$ maps it to the affine chart
$T^{'}\backslash \!\!\backslash U=Spec(\mathcal A^{T^{'}})$ containing the point
$e_{\rk \Delta}$. Shrinking the map we assume that the divisors defined by
 $p_{\mu}(x)$ are principal and defined by the regular functions
  $f_\mu\in \mathcal A^{T^{'}}=\mathcal O(U)$. The inverse images of these
  functions are  invariant functions on $U$ with the same divisors  of zeros
  as $p_{\mu}(x)$ (Since the zeros  of
$p_{\mu}(x)$ are $T^{'}$-invariant), thus they also define
$\mathcal J_{0}|_{U}$.

 Lt us apply the universal property of the blow up to the left hand of the diagram.
 Reformulating  \cite[Prop. 7.14, Corr. 7.15]{har}, we
get that $\mathcal J_{0}$ is equal to the ideal of $\mathcal A$,
generated by the ideal  $\mathcal J_{e_{\rk \Delta}}\subset \mathcal
A^{T^{'}}$. Thus $\mathcal J_{0}^{T^{'}}=\mathcal J_{e_{\rk
\Delta}}$ (cf.\cite{kraft}). As we have seen the ideal  $\mathcal
J_{e_{\rk \Delta}}=\mathcal J_{0}^{T^{'}}$ contain the functions
$f_\mu$, but the zeros of these functions coincide with the curves $p_{\mu}(x)=0$ on
the surface $X_{\Delta^{'}}$. Any two curves from this set intersect transversally in the point
 $e_{\rk \Delta}$ (though it is sufficient to know the intersection of the curves  $L-E_1-e_{\rk
\Delta}$ и $L-E_2-e_{\rk \Delta}$). This implies that the ideal $\mathcal
J_{e_{\rk \Delta}}$ coincide with the maximal ideal of the point $e_{\rk
\Delta}$.
\end{proof}

For the case of $E_8$ let us prove an analog of Proposition \ref{blowup}

\begin{proposition}\label{blowupE_8} Let $G$ be a group of type $E_8$, $p_{\pi_{\alpha}}$
  is the quotient morphism with respect to the action of $\lambda$, and $\sigma_0$  is the morphism
 $\lambda\backslash \!\! \backslash((G/P)^{ss}_{\varepsilon}\cap D_{h_{\pi_\alpha}})\longrightarrow \Bbb P(V_1)$,
  defined by the projection $p_0$ on $\Bbb P(V_1)$.
Assume we have an embedding of the universal torsor $\mathcal T^{\,
 '}$ over del Pezzo surface of the degree $2$  in $G^{'}/P^{'}\subset \Bbb P(V_1)$, that is not contained in any
 $T^{'}$-invariant hyperplane.
Let us fix sufficiently general points $s,s^{'}\in G^{'}/P^{'}$ and denote by
 $\mathcal T=p_{\pi_{\alpha}}^{-1}\sigma_{0*}^{-1}(s^{'}s^{-1}\mathcal T^{\, '})$ the composition of the proper transform of $s^{'}s^{-1}\mathcal T^{\,
 '}$ with respect to the weighted blow up  $\sigma_0$ and its  inverse image of the quotient morphism by
  $\lambda$. Then the quotient $T\backslash \!\!\backslash\mathcal T$ is the surface $X_{\Delta}$
  obtained by blowing up the del Pezzo  surface $X_{\Delta^{'}}$ in  $e_{\rk_\Delta}=p_{T^{'}}(s)$.
\end{proposition}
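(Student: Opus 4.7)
The plan is to adapt the argument of Proposition \ref{blowup}, substituting the weighted blow up of Proposition \ref{bl} for the ordinary blow up used there. I would first set up the commutative diagram
\[
\xymatrix{
\widetilde{X} \ar[d]_{\widetilde{\sigma}} & \ar[l]_{p_{T^{'}}} \widetilde{\mathcal T} \ar[r] \ar[d]_{\sigma} & \lambda\backslash\!\!\backslash((G/P)^{ss}_{\varepsilon}\cap D_{h_{\pi_\alpha}}) \ar[d]_{\sigma_0} \\
X_{\Delta^{'}} & \ar[l]_{p_{T^{'}}} s^{'}s^{-1}\mathcal T^{\,'} \ar[r] & \Bbb P(V_1)
}
\]
with $\widetilde{X}:=T^{'}\backslash\!\!\backslash\widetilde{\mathcal T}$, so that $T\backslash\!\!\backslash\mathcal T=T^{'}\backslash\!\!\backslash\widetilde{\mathcal T}=\widetilde X$. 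By assertion $2)$ of Theorem \ref{P-Q} one has $s^{'}s^{-1}\mathcal T^{\,'}\cap G^{'}/P^{'}=T^{'}s^{'}$; passing to proper transforms under $\sigma_0$ yields $\widetilde{\mathcal T}\setminus\sigma_0^{-1}(T^{'}s^{'})\cong s^{'}s^{-1}\mathcal T^{\,'}\setminus T^{'}s^{'}$, and after $T^{'}$-quotient $\widetilde X\setminus\widetilde{\sigma}^{-1}(e_{\rk\Delta})\cong X_{\Delta^{'}}\setminus\{e_{\rk\Delta}\}$. Thus $\widetilde\sigma$ is an isomorphism away from $e_{\rk\Delta}$, and $\widetilde X$ is obtained from $X_{\Delta^{'}}$ by a modification supported at this single point.

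Next I would identify this modification by a local universal-property argument. Choose a $T^{'}$-invariant affine chart $U=\operatorname{Spec}(\mathcal A)$ on $s^{'}s^{-1}\mathcal T^{\,'}$ containing $T^{'}s^{'}$, with quotient chart $\operatorname{Spec}(\mathcal A^{T^{'}})$ containing $e_{\rk\Delta}$. The generators $Q_\alpha$ (of weight $2$, from the line subbundle $P^{'-}*_{L^{'}}\langle v_\alpha\rangle$) and $P_{\pi_\alpha-\gamma}$ (of weight $1$, from $P^{'-}*_{L^{'}}V^{'}_2$) of the ideal $\mathcal J$ appearing in Proposition \ref{bl}, restricted to $\mathcal T^{\,'}$, become semi-invariant sections whose zero loci are cut out by the equations $q_\nu=0$ and $p_\mu=0$ of Remark \ref{equation}. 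Their $T^{'}$-invariant descents $\tilde p_\mu,\tilde q_\nu\in\mathcal A^{T^{'}}$ are the local equations on $X_{\Delta^{'}}$ of the conics and the cubic of Theorem \ref{P-Q}. The weighted analogue of \cite[Prop.~7.14, Corr.~7.15]{har} then identifies $\widetilde X|_{U/T^{'}}$ with $\proj_{wt}\bigoplus_n (\mathcal J^n)^{T^{'}}$, where $\tilde p_\mu$ sits in weight $1$ and $\tilde q_\nu$ in weight $2$.

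The decisive point is that these weights match the orders of vanishing at $e_{\rk\Delta}$. By assertion $1)$ of Theorem \ref{P-Q} the conics $\tilde p_\mu=0$ pass simply through $e_{\rk\Delta}$ and any two of them intersect transversally there, so the $\tilde p_\mu$ already generate $\mathfrak m_{e_{\rk\Delta}}$; the cubic $\tilde q_\nu=0$ has an ordinary double point at $e_{\rk\Delta}$, giving $\tilde q_\nu\in\mathfrak m_{e_{\rk\Delta}}^2$. Consequently the weighted Rees algebra $\bigoplus_n(\mathcal J^n)^{T^{'}}$ coincides with the ordinary Rees algebra $\bigoplus_n\mathfrak m_{e_{\rk\Delta}}^n$: the weight-$1$ generators cut out the maximal ideal, and the weight-$2$ generator $\tilde q_\nu$, lying in $\mathfrak m^2$, introduces no new Proj coordinate. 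Therefore $\widetilde X$ is the ordinary blow up of $X_{\Delta^{'}}$ at $e_{\rk\Delta}$, which is $X_\Delta$.

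The main obstacle, and the only genuine novelty over Proposition \ref{blowup}, is controlling the weighted Rees algebra after passing to $T^{'}$-invariants: one must verify that no additional weight-$2$ generator survives on the quotient, and that $\tilde q_\nu\in\mathfrak m^2$ really collapses the weighted Proj to an ordinary one rather than producing a quotient-singularity artefact on the exceptional divisor. Remark \ref{wtblrem}, which realises the weighted blow up as a $\mu_2$-quotient of an ordinary blow up, provides the natural framework for this verification: the $\mu_2$-action should lift to the torsor in such a way that the resulting cover is \'etale over a punctured neighbourhood of $e_{\rk\Delta}$, reducing the claim to the unweighted case already handled in Proposition \ref{blowup}.
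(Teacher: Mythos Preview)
Your approach is correct in outline but takes a genuinely different route from the paper. You package the argument as a weighted Rees algebra computation on the quotient surface: the weight-$1$ generators $\tilde p_\mu$ (restrictions of the $P_{\pi_\alpha-\gamma}$) generate $\mathfrak m_{e_{\rk\Delta}}$, the weight-$2$ generator $\tilde q_\nu$ (restriction of $Q_\alpha$) lies in $\mathfrak m_{e_{\rk\Delta}}^2$ because the cubic has an ordinary double point there, and hence $\mathcal A^{T'}[\tilde p_\mu t,\tilde q_\nu t^2]=\mathcal A^{T'}[\tilde p_\mu t]=\bigoplus_n\mathfrak m^n$, so the weighted $\proj$ collapses to the ordinary blow up. The paper instead works directly in the exceptional fibre $\Bbb P_{wt}(\Bbb K v_\alpha\oplus V_2')$ over $s'$: writing the torsor locally as a complete intersection $(f_0,\ldots,f_l)$, it computes the weighted initial forms $\overline f_i$, shows via a rank argument on the Jacobian (using exactly the double-point condition on $q_\alpha$ and the transversality of the conics) that $\overline f_1,\ldots,\overline f_l$ cut out a $\Bbb P(2,1,1)$, and that $\overline f_0=c_{\pi_\alpha}+\sum\overline d_{ij}c_{\gamma_i}c_{\gamma_j}$ then cuts out a smooth rational curve in it. Both arguments hinge on the same two geometric facts from Theorem~\ref{P-Q}; yours is cleaner once the machinery is in place, while the paper's is self-contained and simultaneously exhibits smoothness of $\widetilde X$ along the exceptional curve.

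The one soft spot in your write-up is the step you flag yourself: that the proper transform of $s's^{-1}\mathcal T'$ under the weighted blow up $\sigma_0$ of the ambient $\Bbb P(V_1)\cap\mathcal U_\alpha$ is computed by the weighted Rees algebra of the restricted ideal, and that this identification survives the $T'$-quotient. This is not literally \cite[Prop.~7.14]{har}; for ordinary blow ups it is standard that the proper transform of $W\subset Z$ under $\mathrm{Bl}_Y Z$ is $\mathrm{Bl}_{W\cap Y}W$, but the weighted analogue needs an argument. Your proposed reduction via Remark~\ref{wtblrem} is the right idea, but note that the $\mu_2$-cover $\tau$ there acts on the ambient coordinates, not on the equations $Q_\alpha,P_{\pi_\alpha-\gamma}$ directly, and is ramified along a divisor rather than \'etale; carrying this out cleanly requires choosing local coordinates on $\Bbb P(V_1)\cap\mathcal U_\alpha$ near $s'$ in which $G'/P'$ is a coordinate subspace, pulling the torsor back through $\tau$, applying the ordinary universal property there, and then descending. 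This is doable, but it is precisely the computation the paper replaces by its explicit fibre analysis.
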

\begin{proof} For the proof of the proposition let us consider the $T^{'}$-invariant affine chart
$(G/P)^{ss}_{\varepsilon}\cap D_{h_{\pi_\alpha}}\cap \mathcal U_\alpha$. According to Proposition \ref{bl},
$\lambda\backslash \!\! \backslash((G/P)^{ss}_{\varepsilon}\cap D_{h_{\pi_\alpha}}\cap \mathcal U_\alpha)\longrightarrow \Bbb P(V_1)\cap \mathcal U_\alpha$
is isomorphic to the weighted blow up  $\sigma_0$ of the variety  $\Bbb P(V_1)\cap \mathcal U_\alpha$ in $P^{'-}_uP'/P'$. The preimage $\sigma_0^{-1}(P^{'-}_uP'/P')$ is identified with $P^{'-}*_{L'}\Bbb P_{wt}(\Bbb K v_\alpha\oplus V^{'}_2)$.
Since the stabilizer of the action of   $T^{'}$ in the point $s^{'}$ is trivial the variety $\sigma_0^{-1}(T^{'}s^{'})$
is identified with $T^{'}s^{'}\times\Bbb P_{wt}(\Bbb K v_\alpha\oplus V^{'}_2)$. In particular
 $T^{'}\backslash\!\!\backslash\sigma_0^{-1}(T^{'}s^{'})\cong \Bbb P_{wt}(\Bbb K v_\alpha\oplus V^{'}_2)$.

 By the induction assumption we have the embedding $\mathcal T^{\, '}\subset  G^{'}/P^{'}$. As it was proved $s^{'}s^{-1}\mathcal T^{\, '}$
intersects  $G^{'}/P^{'}$ transversally in the orbit $T^{'}s^{'}$. The quotient $T^{'}\backslash\!\!\backslash s^{'}s^{-1}\mathcal T^{\, '}$
 a smooth del Pezzo surface of the degree $2$. This implies that in the neighbourhood  of the point
  $p_{T^{'}} (s^{'})$ the germ of this surface is the complete intersection and is defined by the system of equations
  $(f_0,\ldots, f_l)$, where
 $l=\dim V_1-\dim T^{'}-4$.
 Since $T^{'}$-variety  can be covered by $T^{'}$-invariant affine maps we can assume that
 $s^{'}\in U_{s^{'}}$ for some $T^{'}$-invariant affine chart and  $f_i\in \Bbb K[{T^{'}}\backslash\!\!\backslash U_{s^{'}}]= \Bbb K[U_{s^{'}}]^{T^{'}}$. From the above we see that the germ of the torsor $s^{'}s^{-1}\mathcal T^{\, '}$
 is aldo defined by the considered regular sequence $(f_0,\ldots, f_l)$.

Let  $\widetilde{\mathcal T}=\lambda\backslash \!\!\backslash\mathcal T$
be the proper transform $s^{'}s^{-1}\mathcal
T^{\, '}$ with respect of the weighted blow up $\sigma_0$.
Our aim is to show that the quotient  $\widetilde{\mathcal T}\cap \sigma_0^{-1}(s^{'})\cong \widetilde{\mathcal T}\cap \Bbb P_{wt}(\Bbb K v_\alpha\oplus V^{'}_2)_{s^{'}}$ by the action of torus $T^{'}$,
is isomorphic to $\Bbb P^1$.

On the considered  $T^{'}$-invariant chart $U_{s^{'}}$  we can assume that the variety
 $P^{'-}_uP^{'}/P^{'}$ is defined by the vanishing of the equations  $p_{\pi_\alpha-\gamma_i}, q_{\alpha}$.
  It is not difficult to check that these equations after restriction to the affine chart
   $v_{\pi_\alpha}\neq 0$ define equations corresponding to vanishing of the components with the weights
    $s_\alpha (\pi_\alpha-\gamma_i)$ and $-\alpha$ correspondingly for the flag variety $G/P$.
     In particular if we restrict  on the quotient  $T\backslash \!\! \backslash s^{'}s^{-1}\mathcal T^{\,
 '}$,  the equations $p_{\pi_\alpha-\gamma_i}=0$  define on the del Pezzo
 surface of degree $2$  the conics passing through the point $e_8$, and
 the equation $q_{\alpha}=0$ defines a cubic with the double point in $e_8$.

 Let us recall that the weighted blow up
   $\sigma_0:\lambda\backslash \!\! \backslash((G/P)^{ss}_{\varepsilon}\cap D_{h_{\pi_\alpha}}\longrightarrow \Bbb P(V_1)$
   is defined as the subvariety in $\Bbb K^{\dim V_1-1}\times \Bbb P_{wt}(\Bbb K v_\alpha\oplus V^{'}_2)$ by the system of equations:

\hspace{20ex}  $ \left \{ \begin
{array}{cl}
p_{\pi_\alpha-\gamma_i}c_{\gamma_j}=p_{\pi_\alpha-\gamma_j}c_{\gamma_i} \\
q_{\alpha}c^2_{\gamma_j}=p^2_{\pi_\alpha-\gamma_j}c_{\pi_\alpha}       \\
\end{array} %\ \ \ \ \ (*_{BL})
\right.$

Let us represent the equations  $f_i$ defining  $s^{'}s^{-1}\mathcal T^{\, '}$ as the sum of linear part from the variables
  $p_{\pi_\alpha-\gamma_j}$ (that we assume to be nonzero) and a remainder that consists of the monomials
  of the bigger weight with respect to  the weighted grading in consideration:
$$f_i=\sum a_{ij}p_{\pi_\alpha-\gamma_j}+f_i^{>1},
$$
where the coefficients $a_{ij}$ depend only on variables $b_\gamma$.
Moreover we assume that $a_{ij}$ are written as the polynomials from $(b_\gamma-(b_\gamma)_{s'})$
with the constant terms $\overline{a}_{ij}$ where $(b_\gamma)_{s'}$ are the coordinates of the point $s'$ in $P'_uP'/P'$.
Our aim is to invesigate which variety is the defined by the equation  $f_i=0$ in the fiber over the point $s^{'}$, where all the coordinates $p_{\pi_\alpha-\gamma_j}, q_{\alpha}$ vanish.
In the chart $c_{\gamma_1}\neq  0$ we get:
$$
f_i=p_{\pi_\alpha-\gamma_1}\sum a_{ij}\frac{c_{\gamma_j}}{c_{\gamma_1}}+p^2_{\pi_\alpha-\gamma_1}\tilde{f_i}^{>1},
$$
 where $\tilde{f_i}^{>1}$ is the polynomial from $p_{\pi_\alpha-\gamma_j}$. Dividing the expression for $f_i$ by $p_{\pi_\alpha-\gamma_1}$,
 setting $p_{\pi_\alpha-\gamma_1}=0$ and considering the homogenous equation with the variables $c_{\pi_\alpha-\gamma_j}$, we
 obtain that the intersection of the proper transform of the variety  $f_i=0$ and a fiber $\sigma_0^{-1}(s^{'})\cong \Bbb P_{wt}(\Bbb K v_\alpha\oplus V^{'}_2)$
 is defined in the fiber $\Bbb P_{wt}(\Bbb K v_\alpha\oplus V^{'}_2)$ by the equation
  $$
  \overline{f}_i=\sum \overline{a}_{ij}{c_{\gamma_j}}.
  $$
  In a similar way we get that the equation  $$f=q_{\alpha}+\sum d_{ij}p_{\pi_\alpha-\gamma_i}p_{\pi_\alpha-\gamma_j}+f_i^{>2}$$
in the fiber over the point $s^{'}$ isomorphic to $\Bbb P_{wt}(\Bbb K v_\alpha\oplus V^{'}_2)$ defines a hyperplane $f=c_{\pi_\alpha}+\sum \overline{d}_{ij}c_{\gamma_i}c_{\gamma_j}$, where $\overline{d}_{ij}$ is defined similar to $\overline{a}_{ij}$.

Let us show that the matrix that consists of the homogenous components   $\overline{f}_i$ of the degree $1$ (of the polynomials $f_i$)
has a rank equal to
$\dim \Bbb P_{wt}(\Bbb K v_\alpha\oplus V^{'}_2)-2$. Let us notice that on the surface  $T^{'}\backslash \!\!\backslash s^{'}s^{-1}\mathcal T^{\, '}$ the equation $q_{\alpha}=0$ define the cubic with a double point in $e_8$. In particular this implies that the
differential $dq_{\alpha}=0$ after the restriction on the tangent space to the torsor
in the point $s'$.
 Thus we have the linear dependance between $dq_{\alpha}$ and $df_0,\ldots df_{l}$.
 In other words we can assume that   $df_1,\ldots df_{l}$ and $dq_{\alpha}$ are linear dependent,
 and a linear part of the polynomial $df_0$ is proportional to $q_\alpha$.
Let us notice that  the restrictions of  $df_i$ to the subspace $\mathfrak t's'\subset T_{s'}(G'/P')$  are zero.
 In the space generated by the forms $db_\delta$, consider the space of forms that are zero on the subspace  $\mathfrak t's'$.
  Let us chose a basis $d\widetilde{b}_k$ in this space, where $k=\dim G'/P'-\dim T'$.
Let us write down $df_i$ in the basis $dp_{\pi_\alpha-\gamma_j},dq_\alpha,d\widetilde{b}_k$ for $i\geqslant 1$.
 We get a matrix in which in the first  $\dim \Bbb(V_1)-\dim G'/P'-2$ rows we have elements $\overline{a}_{ij}$.
 Since the matrix formed by  $df_1,\ldots df_{l}$  has the rank $\dim V_1-\dim T'-3$, than the matrix of the coefficients $\overline{a}_{ij}$  has the rank not less than  $\dim \Bbb P(V_1)-\dim T'-3-(\dim G'/P'-\dim T')=\dim V'_2-2$.
  Let us notice that on the surface  $T^{'}\backslash \!\!\backslash s^{'}s^{-1}\mathcal T^{\, '}$ the equations $p_{\pi_\alpha-\gamma_i}=0$
define the conics passing through  $e_8$ and intersecting transversally there. This implies that the differentials $dp_{\pi_\alpha-\gamma_1}$ и $dp_{\pi_\alpha-\gamma_2}$ generate the tangent space to the surface in  $e_8$,
 that gives the linear independence of  $dp_{\pi_\alpha-\gamma_1},dp_{\pi_\alpha-\gamma_2}, df_1,\ldots df_{l}$
 Thus the rank of the matrix  $\overline{a}_{ij}$ is not bigger than $\dim V^{'}_2-2$ and should be equal to this number.

 The preceding arguments show that the equations $\overline{f}_1,\ldots,\overline{f}_l$ define in the space
 $\Bbb P_{wt}(\Bbb K v_\alpha\oplus V^{'}_2)$ the weighted projective subspace
$\Bbb P(2,1,1)$. The equation $\overline{f}_0=c_{\pi_\alpha}+\sum \overline{d}_{ij}c_{\gamma_i}c_{\gamma_j}$
defines a smooth rational curve,  since it is the image of the conic   $c_0^2+\sum \overline{d}_{ij}c_{\gamma_i}c_{\gamma_j}=0$
with respect to  the quotient  by the action of the involution $\Bbb P^2\rightarrow \Bbb P(2,1,1)$ that reverses sign of the first coordinate.
 Thus we get that the fiber over the point $s'$ consists of the smooth rational curve
 (A simple calculation in the local coordinates shows that the obtained surface is smooth, see next remarks).

 That we get that
$T\backslash \!\!\backslash\mathcal T$ is the surface $X_{\Delta}$, obtained by the blow up of del Pezzo surface
 $X_{\Delta^{'}}$ in the point $e_{\rk_\Delta}=p_{T^{'}}(s)$.
\end{proof}

\begin{remark} To define the intersection of the fiber  $\sigma_0^{-1}$ and a proper transform of the hypersurface $f=0$
we can argue in the following way. Let us use the realization of the blow up from the Remark \ref{wtblrem}. Let $\overline{f}$
 be a component of $f$ that consists of the monomials with the minimal grading   $d$ in the variables $z_i$:
  $$\overline{f}=\sum_{\sum a_j i_j=d} p_{i_1\ldots i_l}(w)z_1^{i_1}\ldots z_l^{i_l}$$
Then we have $\tau^*f=\sum_{\sum a_j i_j=d} p_{i_1\ldots i_l}(w)z_1^{a_1i_1}\ldots z_l^{a_li_l}+f^{>d}$,
where the component  $f^{>d}$ has degree strictly bigger than  $d$. The intersection of $\sigma_0^{-1}$ and of the proper transform $f=0$
  for the ordinary blow up  $Bl$ is defined by the homogeneous  component of minimal degree $\overline{\tau^*f}=\sum_{\sum a_j i_j=d} p_{i_1\ldots i_l}(w)z_1^{a_1i_1}\ldots z_l^{a_li_l}$ ( In terms of $\proj$ this is the projection on  $\mathcal J^{d}/\mathcal J^{d+1}$,
 where $\mathcal J=(z_1,\ldots,z_l)$).
 To get the equation of the intersection of  $\sigma_0^{-1}$ and a proper transform of the hypersurface $f=0$
 for the weighted blow up  $Bl$, consider the quotient  $(\overline{\tau^*f}=0)$ by $\tau_{\Bbb P}$.
 It is easy to see that it is defined by the equation   $\sum_{\sum a_j i_j=d} p_{i_1\ldots i_l}(w)z_1^{i_1}\ldots z_l^{i_l}=0$.
 In more invariant terms the latter equation is the projection on the component of the degree  $d$ in the algebra $\proj_{wt} \bigoplus \mathcal J^{i}/\mathcal J^{i+1}$,
 where we consider the weighted grading where the generators $\mathcal J=(z_1,\ldots,z_l)$ carry the weights
 $(a_1,\ldots,a_l)$.
\end{remark}

\begin{remark} The preceding remark provides us an illustration of the situation described in Proposition \ref{blowupE_8}.
Consider the blow up in zero point of the cone  $z_0^2=z_1z_2$, lying in the affine space $\Bbb K^3$. This blow up give a resolution of the singularity.
 And over the single point we have a conic  $\xi_0^2=\xi_1\xi_2$, that is a  $(-2)$-curve. Now consider the involution on  $\Bbb K^3\times \Bbb P^3$  acting on the coordinates  $z_0$ and $\xi_0$ by multiplication with $-1$.   The quotient of the blow up
 $\Bbb K^3$ in zero by the action of this involution is the weighted blow up defined  in $\Bbb K^3\times \Bbb P(2,1,1)$ by the equations

 \hspace{20ex}  $ \left \{ \begin
{array}{cl}
z_0\xi_i^2=z_i^2\xi_0 \\
z_1\xi_2=z_2\xi_1       \\
\end{array}
\right.$

The quotient of the considered blown up cone is the proper transform with respect to the weighted blow up of the surface defined by the equation
$z_0=z_1z_2$.
Let us notice that after taking the quotient by this involution the conic $\xi_0^2=\xi_1\xi_2$, that is exceptional curve over zero
become a two sheeted covering of the rational curve in  $\Bbb P(2,1,1)$ defined by the equation $\xi_0=\xi_1\xi_2$.
By the projection formula from the intersection theory the latter curve is the $(-1)$-curve.
Besides it is easy to see that the image of the blown  up cone is the smooth surface that is the ordinary  blow up of affine plane.
\end{remark}

From this proposition it is easy to get the second assertion of the theorem.
Indeed let $\omega$ is the nieghbour weight to $\pi_\alpha$,
then $\omega \in V_1$. The curve  $E_{\pi_\alpha}$ is the blow up of the point
 $e_{\rk \Delta}$ on the surface $X_{\Delta^{'}}$,
the curves  $E_\omega$ are $(-1)$-curves  of the surface
$X_{\Delta^{'}}$ that do not intersect with $E_{\pi_\alpha}$,
since they do not pass throught  $e_{\rk \Delta}$.

The curves  $E_{\mu}$ and $E_{\nu}$ correspond to nonadjacent weights
are defined by the equations $p_{\mu}=0$ and $q_{\nu}=0$ where $\mu \in V_2$,
$\nu \in V_3$ are the curves of the degree $2$ and  $3$ with respect to
$-K_{X_{\Delta^{'}}}$. The intersection indexes with the curve
$E_{\pi_\alpha}$ are equal to $1$ and $2$,  since the first curve passes through the point
 $e_{\rk \Delta}$, and the second has a double point in  $e_{\rk
\Delta}$.

Now we are able to prove the main theorem  (in this step we give  the argument of \cite{skor}).

We have constructed embedding  $\mathcal T\subset (G/P)^s$. Consider the weight hyperplanes
 $H_\omega=\{\langle v\rangle \in \Bbb
P(V(\pi_\alpha))\  |\
 v_\omega=0\}$.
 By construction the divisor $E_\omega=\tau(H_\omega\cap \mathcal T)\subset X_{\Delta}$ is the
 $(-1)$-curve on $X_{\Delta}$, besides all $(-1)$-curve can be obtained in such way. It is well known
(\cite{Manin}) that $E_{\omega}$ generate $\Pic(X_\Delta)$.

For  $\widehat{\mathcal T}$ let us take the preimage in
$V(\pi_\alpha)$ of the torsor $\mathcal T$ with respect to the projectivization
$V(\pi_\alpha)\setminus \{0\}\longrightarrow \Bbb
P(V(\pi_\alpha))$.

Let $(G/P)^{s}_T$  be the set of stable points for the sheaf
 $i^*\mathcal O(1)$  and the action of $T$ (where $i:G/P\subset
\Bbb P(V(\pi_\alpha))$. Consider the subset  $(G/P)^{sf}\subset
(G/P)^{s}$ of the points with the stabilizer  $Z(G)$. As it will be showed
the set $(G/P)^{sf}$ has the codimension $>1$ in $G/P$, thus we have
$\Pic((G/P)^{sf})=\Pic (G/P)=\Xi(P)=\Bbb Z \pi_\alpha$ (cf.
\cite{popov}). Let us denote by   $\widehat{G/P}$  the preimage of
$(G/P)^{sf}$ with respect to the projectivization $V(\pi_\alpha)\setminus
\{0\}\longrightarrow \Bbb P(V(\pi_\alpha))$.

  Thus it is easy to see that
$\Pic(\widehat{G/P})=0$. Since there are no  regular
 $\widehat{T}$-semiinvariant invertible functions on $\widehat{G/P}$,
 then from the exact sequence we get that the torsor $(G/P)^{sf}$ is universal.
 The images of the hyperplanes
$\widehat{H_\omega}=\{ v \in \Bbb V(\pi_\alpha)\  |\
 v_\omega=0\}$ by the quotient morphism generate
 $\Pic (\widehat{T}\backslash \!\!\backslash(\widehat{G/P}))\cong
 \Pic_{\widehat{T}}(\widehat{G/P})$.

Let us show that the torsor $\widehat{\mathcal T}$ is also universal.
Using the exact sequence  $(CTS)$
for the torsors  $\widehat{\mathcal T}$ and $\widehat{G/P}$ we obtain a commutative diagram where
the right vertical arrow is induced by the embedding $\widehat{\mathcal
T}\hookrightarrow\widehat{G/P}$:

$$
\xymatrix{ \ar[r]  &  \Xi(\widehat{ T})\ar[r]\ar[d]_{\parallel} &
\Pic (\widehat{T}\backslash \!\!\backslash(\widehat{G/P}))\ar[r]\ar[d] &
 \\
\ar[r]       & \Xi(\widehat{T}) \ar[r]       & {\Pic}(X_\Delta)
\ar[r]&
 \\
}
$$

But we have seen that the quotients  $\widehat{T}\backslash \!\!\backslash
\widehat{H_\omega}$ of the hypersurfaces $\widehat{H_\omega}$ generate
${\Pic}(X_\Delta)$. Thus we get that the vertical map is surjective that implies that all maps in commutative diagram
are isomorphisms. That implies the universality of the torsor $\widehat{\mathcal
T}$.

In the case when the root system of type  $E_8$ the main theorem is formulated by the following way.

\begin{theorem} Let $\Delta$ be the root system of type $E_8$.
Consider the embedding of  $G/P$ in the projectivization of adjoint representation
 $\Bbb P(V(\pi_\alpha))=\Bbb P(\gg)$. For the left action of maximal torus $T$ on
  $G/P$ let us denote by
$(G/P)_{\varepsilon}^s$ the set of stable points with respect to the
$G$-linearized line bundle  $\mathcal O(2)|_{G/P}\otimes
k_{-\pi_\alpha}$. Let $\tau:(G/P)_{\varepsilon}^s\longrightarrow
T\backslash \!\!\backslash(G/P)^s$ be a quotient morphism. Consider a sufficiently general
del Pezzo $X_{
\Delta}$ of degree $1$. There exists an embedding  $\imath$ of the surface
$X_{ \Delta}$  in the quotient $T\backslash \!\!\backslash(G/P)_{\varepsilon}^s$, such that for the torsor
 $\mathcal T=\tau^{-1}(\imath (X_{ \Delta}))$
the following conditions hold:
\begin{itemize}
 \item[$\bullet$] Consider a hyperplane $H_\omega=\{\langle v\rangle \in \Bbb P(V(\pi_\alpha))\  |\
 v_\omega=0\}$. The divisor $E_\alpha=\tau(H_\alpha\cap \mathcal T)\subset X_{\Delta}$ is the
 $(-1)$-curve on $X_{\Delta}$.
 \item[$\bullet$] The  $(-1)$-curves $E_{\omega_1}$ and $E_{\omega_2}$ do not intersect iff the weights
  $\omega_1$ and $\omega_2$ are not adjacent on the weight polytop.
\end{itemize}
\end{theorem}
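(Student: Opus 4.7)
The plan is to mimic the proof of Theorem \ref{main}, but substituting Proposition \ref{blowupE_8} for Proposition \ref{blowup} at the geometric step, since in type $E_8$ the naive quotient $\lambda\backslash\!\!\backslash (G/P)^{ss}$ is not a blow up. I would proceed inductively on the rank, with the base being the $E_7$-case already provided by Theorem \ref{main}. So I assume that for the del Pezzo surface $X_{\Delta'}$ of degree $2$ (with $\Delta'$ of type $E_7$) the universal torsor $\widehat{\mathcal{T}}'$ is already embedded into the affine cone over $G'/P'\subset\Bbb P(V_1)$, and the projection $\mathcal{T}'\subset G'/P'$ is the associated $T'$-torsor. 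Pick a point $s\in G'/P'$ with $p_{T'}(s)=e_{\rk\Delta}\in X_{\Delta'}$ and a sufficiently general $s'\in G'/P'$, and perform the diagonal twist $s's^{-1}$ on the weight basis of $V_1$, yielding $s's^{-1}\mathcal{T}'$; this is still a $T'$-torsor over $X_{\Delta'}$ but now meeting $G'/P'$ transversally in the single orbit $T's'$ by Theorem \ref{P-Q}.

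Next I would define the candidate $\mathcal{T}:=p_{\pi_\alpha}^{-1}\sigma_{0*}^{-1}(s's^{-1}\mathcal{T}')$, where $\sigma_0$ is the weighted blow up of $\Bbb P(V_1)\cap \mathcal{U}_\alpha$ in $P^{'-}_u P'/P'$ coming from Proposition \ref{bl}, and $p_{\pi_\alpha}$ is the quotient by the one parameter subgroup $\lambda$, considered only on the slice $(G/P)^{ss}_\varepsilon\cap D_{h_{\pi_\alpha}}$ (which is the honest domain where $\sigma_0$ is defined). Applying Proposition \ref{blowupE_8} directly, the quotient $T\backslash\!\!\backslash\mathcal{T}$ is isomorphic to the blow up of $X_{\Delta'}$ at $e_{\rk\Delta}$, hence is a del Pezzo surface of degree $1$; genericity of $s$ ensures $e_{\rk\Delta}$ is in general position relative to the previously blown up points, so every sufficiently general $X_\Delta$ is obtained this way.

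Having the embedding $\imath:X_\Delta\hookrightarrow T\backslash\!\!\backslash (G/P)^s_\varepsilon$, I would then check the two bullet points about $(-1)$-curves exactly as in the degree $>1$ situation. For $\omega\in\supp(V_1)$ the divisor $E_\omega=\tau(H_\omega\cap\mathcal{T})$ is a $(-1)$-curve on $X_{\Delta'}$ not passing through $e_{\rk\Delta}$, while the weight $\pi_\alpha$ itself corresponds to the exceptional divisor of $\widetilde{\sigma}$; for weights $\mu\in \supp(V_2)$ and $\nu\in\supp(V_3)$ one uses the equations $p_\mu=0$ and $q_\nu=0$ of Remark \ref{equation} which cut out conics through $e_{\rk\Delta}$ and cubics with a node at $e_{\rk\Delta}$ respectively, producing the remaining $(-1)$-curves after the blow up, with intersection indices governed by adjacency on the weight polytope (by Corollary \ref{wtp_u}). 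Finally, passing to the cones $\widehat{\mathcal{T}}\subset\widehat{G/P}$ and using the functoriality of the Colliot-Thélène–Sansuc sequence together with $\Pic(\widehat{G/P})=0$ (which follows from $\codim((G/P)\setminus(G/P)^{sf})>1$), one identifies $\Xi(\widehat T)\cong\Pic(X_\Delta)$ and concludes universality.

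The main obstacle is the verification that the local description supplied by Proposition \ref{blowupE_8} actually produces a \emph{smooth} del Pezzo surface in the degree $1$ case, since the weighted projective fiber $\Bbb P_{wt}(\Bbb K v_\alpha\oplus V_2')$ carries an isolated singularity at the point $\langle v_\alpha\rangle$; one has to argue, as in the last remark of the preceding section, that the smooth rational curve cut out on the fiber by $\overline{f}_0=c_{\pi_\alpha}+\sum \overline{d}_{ij}c_{\gamma_i}c_{\gamma_j}$ avoids this singular point (equivalently that the Veronese involution sends it to a conic missing the branch locus), which is precisely where the ``sufficiently general $X_\Delta$'' hypothesis is consumed. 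Once this transversality is established, the rest of the proof is a routine translation of the arguments for degree $>1$.
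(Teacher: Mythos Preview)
Your overall architecture is correct and matches the paper's: use the $E_7$ case of Theorem \ref{main} as induction base, twist by $s's^{-1}$, take the proper transform under the weighted blow up of Proposition \ref{bl}/\ref{blowupE_8}, pull back along $p_{\pi_\alpha}$, and then run the same $(CTS)$-sequence argument for universality.

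However, there is a genuine gap. In type $E_8$ the grading of $V(\pi_\alpha)=\gg$ by $(\pi_\alpha;\cdot)$ has length $4$, not $3$: besides $V_2$ and $V_3$ there is the one-dimensional component $V_4=\langle v(-\pi_\alpha)\rangle$. Accordingly, among the coordinate hyperplanes $H_\omega$ there is one, $H_{-\pi_\alpha}$, whose restriction to the open cell is a \emph{quartic} polynomial $q_4(x)$ in the exponential coordinates $\{c_\gamma\}$. Your list ``$p_\mu$ for $\mu\in\supp(V_2)$ and $q_\nu$ for $\nu\in\supp(V_3)$'' stops one grade short, and Theorem \ref{P-Q} as stated in the paper only handles degrees $2$ and $3$. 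Without knowing that $q_4(s's^{-1}x)|_{\mathcal T'}\not\equiv 0$ for generic $s'$ and generic $e_8$, you cannot conclude that $E_{-\pi_\alpha}$ is a $(-1)$-curve on $X_\Delta$, and the first bullet of the theorem fails for that weight.

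This nontriviality of $q_4$ is in fact the main new computation the paper performs in the $E_8$ proof (and is where the hypothesis ``sufficiently general $X_\Delta$'' is actually used). The argument is of the same flavor as the degree-$3$ case in Theorem \ref{P-Q}: one writes $\widetilde q_4(s')=q_4(s's^{-1}x)$ as a polynomial in the exponential coordinates $c_\gamma$ of $s'\in P_u^{'-}P'/P'$, isolates the coefficient $R$ of a specific monomial $c_{\delta_1}^2c_{\delta_2}^2c_{\delta_3}^2$ (with the $\delta_i$ corresponding to three disjoint lines $L-E_{2i-1}-E_{2i}$), and then shows $R\not\equiv 0$ by restricting to the pencil line $L_{78}=L-E_7-e_8$ and letting $e_8$ approach the unique residual intersection with the cubic $C=3L-\sum_{i\le 6}E_i-2E_7$. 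The leading term $r_{2C}\,x_{(\delta_1\delta_2\delta_3)}^2x_\alpha^2$ then dominates and $R$ is nonzero. The issue you flag about the singular point $\langle v_\alpha\rangle$ of $\Bbb P_{wt}(\Bbb K v_\alpha\oplus V_2')$ is handled in the paper only by the short local computation in the remarks after Proposition \ref{blowupE_8}; it is not the crux of the $E_8$ argument.
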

\begin{proof}
Assume we constructed embedding of  the torsor $\mathcal T^{'}$ over
 $X_{\Delta^{'}}$ in the flag variety $G^{'}/P^{'}\subset \Bbb
P(V_1)$. Let $e_{8}\in X_\Delta$ be a point that we blow up to obtain $X_{\Delta}$
from $X_{\Delta^{'}}$. Let us choose a point $s\in
G^{'}/P^{'}$, such that $\tau(s)=e_{8}$. By theorem \ref{P-Q} for a sufficiently general
 $s^{'}\in G^{'}/P^{'}$ we have $s^{'}s^{-1}\mathcal
T^{'}\cap G^{'}/P^{'}=Ts^{'}$. Let us notice that by Proposition
\ref{blowupE_8} the preimage of  $s^{'}s^{-1}\mathcal T^{'}$ for
 the weighted blow up $Bl_{wt}(\Bbb P( V(\pi'_\beta)), G^{'}/P^{'})\longrightarrow
\Bbb P( V(\pi'_\beta))$ is isomorphic to the $T^{'}$-torsor over $X_\Delta$,
that we denote by  $\widetilde{\mathcal T}^{'}$.

By the Theorem  \ref{E_8factor} the quotient  $\lambda\backslash \!\!\backslash
(G/P)^s_{\varepsilon}\cap D_{h_
\alpha}\cap \mathcal U_\alpha $ is isomorphic to $
Bl_{wt}(\Bbb P(V(\pi'_\beta)),
G^{'}/P^{'})$.  Let us define a required torsor taking the preimage of $\widetilde{\mathcal T}^{'}$ with
respect to the quotient by  $\lambda$, and then taking affine cone over in
$V(\pi_\alpha)$ over it.

The arguments are completely analogues to the proof of the main Theorem
\ref{main}. Thus we give only the missing steps. Let us prove the first assertion of the theorem.

\begin{center}
  \epsfig{file=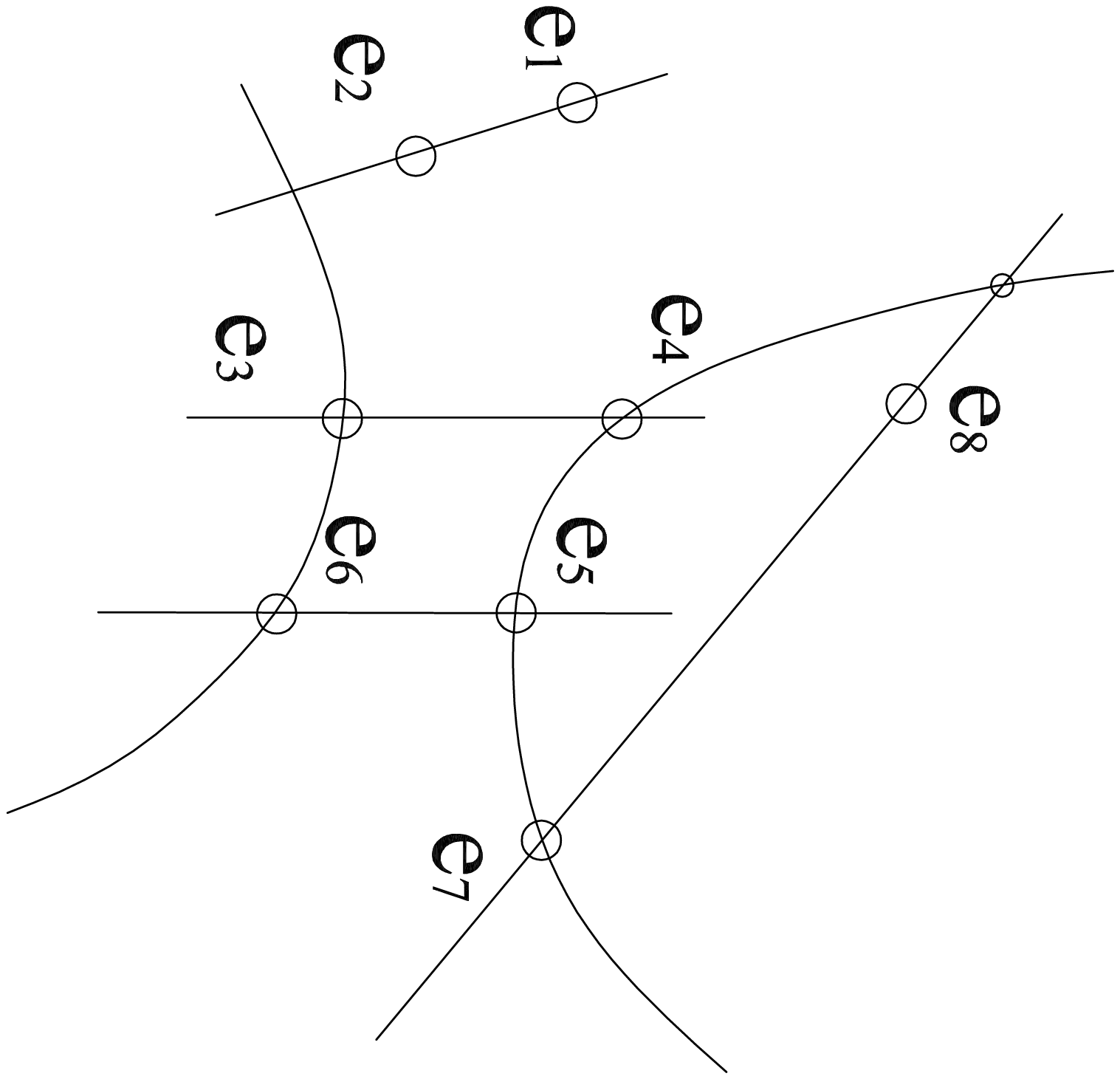,height=6cm,angle=90,clip=}

fig.7
\end{center}

Our aim is to show that for a sufficiently general point $e_8$
the homogeneous polynomial $q_4(s^{'}s^{-1}x)$ of weight $4$ is not identically zero when we restrict
it to the torsor for a sufficiently general point $s^{'}$
(for the equation of lower degrees we proved it in the Theorem
\ref{P-Q}).

Consider the exponential representation for the point $\langle
v\rangle \in P^-_uP/P$:
$$\langle
v\rangle =\langle\exp(\sum \limits_{\gamma \in
\Delta_{P_u}}x_\gamma e_{-\gamma})v_{\pi_\alpha}\rangle.$$

Let us recall that the equation  $q_4(x)$ of the degree $4$ from
$\{x_\gamma\}_{\gamma \in \Delta_{P_u}}$ is obtained by requiring that the coordinate by the
vector $v(-\pi_\alpha)$ is zero. This polynomial define on the surface $X_{\Delta^{'}}$ a curve of the degree four
 with respect to the pairing with $-K_{X_{\Delta^{'}}}$.

Let us argue as in the proof of proposition  \ref{P-Q}.
We assume that the point  $s^{'}$ belong to the open cell of
$G^{'}/P^{'}$. Thus we can represent it as the exponential map
$$s^{'}=\langle \exp(\sum_{\gamma\in \Delta_{\pp^{'}_u}} c_\gamma
e_{-\gamma})v(\pi_\alpha-\alpha)\rangle.$$ Substitute $s^{'}$ by this expression  in $q_4(s^{'}s^{-1}x)$.

And consider the coefficient  $R$ by the monomial
$c_{\delta_1}^2c^2_{\delta_2}c_{\delta_3}^2$ where
$\delta_1=L_{1}=L-E_1-E_2$, $\delta_2=L_{2}=L-E_3-E_4$,
$\delta_3=L_{3}=L-E_5-E_6$ (see fig.8). Our aim is to prove the non-triviality of this coefficient.

 The submonomials of this monomial occur in expressions of the linear systems generating the following curves:
$$C=3L-E_1-E_2-E_3-E_4-E_5-E_6-2E_7\  \text{corr. to the weight}\  \delta_1+\delta_2+\delta_3-2\alpha,$$
$$Q_1=2L-E_3-E_4-E_5-E_6-E_7,\  \text{corr. to the weight}\ \delta_2+\delta_3-\alpha,$$
$$Q_2=2L-E_1-E_2-E_5-E_6-E_7,\  \text{corr. to the weight}\  \delta_1+\delta_3-\alpha,$$
$$Q_3=2L-E_1-E_2-E_3-E_4-E_7,\  \text{corr. to the weight}\  \delta_1+\delta_2-\alpha.$$
 Using the notation of Proposition \ref{P-Q}, by $(\delta_1\delta_2\delta_3)$
  we denote the weight $\delta_1+\delta_2+\delta_3-2\alpha$, corresponding to the cubic
  $C$. In a similar way denote by   $(\delta_i\delta_j)$ the weight
   $\delta_i+\delta_j-\alpha$ corresponding to the conic $Q_k$ ($k\neq i,j$).

  The coefficient by the monomial 
$c_{\delta_1}^2c^2_{\delta_2}c_{\delta_3}^2$ we denote by $R$.
Let us notice that  $x^2_{(\delta_1\delta_2\delta_3)}$ (the last monomial defining $C^2$) is contained only in one monomial
 $\frac
{x^2_{(\delta_1\delta_2\delta_3)}x^2_\alpha}{s^2_{(\delta_1\delta_2\delta_3)}s^2_\alpha}$,
from $R$.

It is not difficult to write the full expression of the homogeneous polynomial $R$, however we won't do that.
 The first monomials from  $R$ ae of the form

 $r_{2C}\frac
{x^2_{(\delta_1\delta_2\delta_3)}x^2_\alpha}{s^2_{(\delta_1\delta_2\delta_3)}s^2_\alpha}+
r_{C,Q_1,L_1}\frac{x_{(\delta_1\delta_2\delta_3)}
x_{\delta_1}x_{(\delta_2\delta_3)}}{s_{(\delta_1\delta_2\delta_3)}s_{\delta_1}
s_{(\delta_2\delta_3)}}+r_{C,Q_2,L_2}\frac{x_{(\delta_1\delta_2\delta_3)}
x_{\delta_2}x_{(\delta_1\delta_3)}}{s_{(\delta_1\delta_2\delta_3)}s_{\delta_2}
s_{(\delta_1\delta_3)}}+$

$+r_{C,Q_3,L_3}\frac{x_{(\delta_1\delta_2\delta_3)}
x_{\delta_3}x_{(\delta_1\delta_2)}}{s_{(\delta_1\delta_2\delta_3)}s_{\delta_3}
s_{(\delta_1\delta_2)}}+\ldots,$

\noindent where $r_{\ldots}$ are the coefficients depending only from the choice
 of the basis in the $G$-module $V(\pi_\alpha)$.
Omitted monomials contain the degree of  
$x_{(\delta_1\delta_2\delta_3)}$ strictly smaller  than $2$.

Let us prove that the coefficient   $R$ by
$c_{\delta_1}c_{\delta_2}c_{\delta_3}$ is nonzero for a sufficiently general point $e_8$
 (we recall that the point $s\in \Bbb P(V_1)$ lie in the  $T'$-orbit that map into 
$e_8$ by the quotient morphism ).
Consider the restriction of  $R$  to the line $L_{78}=L-E_7-e_8$.
Let us fix this line and prove that for a general point of this line we have
$R\neq 0$. Let $l_i$ and $q_{j}$ are the intersection points of the line  $L_{78}$ with
 the line $L_{i}$ and with the conic $Q_j$ correspondingly, and $c$ --- is the intersection point of
$C$ and  $L_{78}$ different from $e_8$. As before let us notice that the values of all 
monomials from $R$ in the point $e_8$ are equal to the constants depending only from the choice basis in $V(\pi_\alpha)$.
  Let us choose the coordinate   $x$
on the line $L_{78}$ in such way that the point   $e_8$ has a coordinate  $0$.
Then the monomials after the restriction to $L_{78}$ can be written in the following way:
$$\frac
{x^2_{(\delta_1\delta_2\delta_3)}x^2_\alpha}{s^2_{(\delta_1\delta_2\delta_3)}s^2_\alpha}|_{L_{78}}
=\frac{1}{c^2e_7^4}(x-c)^2(x-e_7)^4$$
$$\frac{x_{(\delta_1\delta_2\delta_3)}
x_{\delta_1}x_{(\delta_2\delta_3)}}{s_{(\delta_1\delta_2\delta_3)}s_{\delta_1}
s_{(\delta_2\delta_3)}}|_{L_{78}}
=\frac{1}{cl_1q_1e_7^3}(x-c)(x-l_1)(x-q_1)(x-e_7)^3.$$
$$\ldots \ldots$$

We omitted other monomials since they have similar form. Let us choose the point $e_8$ on $L_{78}$ in a sufficiently small
neighbourhood of 
 $c$, i.e. $|c-e_8|<\varepsilon$. Then we have
$\min_{i,k}(|e_8-l_i|,|e_8-q_k|)>A$ and $|e_8-e_7|<B$ for some constants $A,B$. Also the coefficients 
$r_{\ldots}$ a bounded by a constant $r_{max}$.

The coefficient by $x^6$
in the expression for $R$ is bigger than $\frac{r_{2C}}{\varepsilon^2 B^4}-{C_1\frac{r_{max}}{\varepsilon
 A^5}}-
 {C_2\frac{r_{max}}{ A^6}}$ (where $C_1$  is the number of monomials in  $R$ containing $x_{(\delta_1\delta_2\delta_3)}$,
  and $C_2$ is the number of monomials in  $R$ not containing $x_{(\delta_1\delta_2\delta_3)}$),
 and the last expression is strictly  bigger than zero for sufficiently small  $\varepsilon$.
Thus $R$ as a function of $x$ is not identically zero for a sufficiently general point $e_8$. That proves the theorem.
\end{proof}

\section{Appendix}

 The Proposition \ref{intersection} is a particular case of the following proposition:

\begin{proposition} Let $H\subset G$ be some semisimple
subgroup normalized by the torus  $T$.
Let $V(\omega)$ be an irreducible 
$G$-module with the highest weight $\omega$. Consider the weight vector 
$v_{\chi} \in V(\omega)$ for which the weight $\chi$ is the vertex of the weight polytop of the representation
 $V(\omega)$,  in other words $\chi=w\omega$ for some $w\in W$.
Consider the module $V_{H}=\langle Hv_{\chi} \rangle$ generated by the vector $v_{\chi}$. Then we have an equality
$$G\langle v_{\omega}\rangle\cap \Bbb P(V_{H})=H\langle v_{\chi}\rangle.$$
 \end{proposition}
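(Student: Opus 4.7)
The plan is to adapt the argument of Proposition~\ref{intersection}, using Lemma~\ref{BGG} and the generalization of Corollary~\ref{wtp_u} asserting that for any vertex $\chi=w\omega$ of the weight polytope of $V(\omega)$, the edges emanating from $\chi$ end at the weights $\chi-\gamma$ for $\gamma\in\Delta_{P_{\chi,u}}$, where $P_\chi\subset G$ is the stabilizer of $\langle v_\chi\rangle$. Note first that $v_\chi$ is annihilated by the positive root spaces of some Borel $B_H\subset H$ containing $T$, so the cyclic module $V_H=\langle Hv_\chi\rangle$ is irreducible with highest weight $\chi$, and its weight polytope has vertices $W_H\cdot\chi$. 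The reverse inclusion $H\langle v_\chi\rangle\subseteq G\langle v_\omega\rangle\cap\Bbb P(V_H)$ is immediate from $v_\chi\in Gv_\omega$.

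For the forward inclusion, given $\langle v\rangle\in G\langle v_\omega\rangle\cap\Bbb P(V_H)$, I first reduce to $v_\chi\neq 0$: by the Bruhat decomposition and Lemma~\ref{BGG} there exists $w\in W$ with nonzero component of $v$ at weight $w\omega$, and since $w\omega$ is a vertex of the weight polytope of $V(\omega)$ lying in $\supp(V_H)$, it is automatically a vertex of the smaller polytope of $V_H$ and hence belongs to $W_H\cdot\chi$; applying a representative $n_{w_H}\in N_H(T)$ of an appropriate $w_H\in W_H$ translates this component to $\chi$. Then Lemma~\ref{BGG}, applied with the Borel for which $\chi$ is the highest weight, places $\langle v\rangle$ in the open Bruhat cell and yields the exponential parametrization
\[
v=\exp\Bigl(\sum_{\gamma\in\Delta_{P_{\chi,u}}}c_\gamma e_{-\gamma}\Bigr)v_\chi.
\]

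By edge-extremality of $\chi-\gamma$ in the weight polytope of $V(\omega)$, this weight cannot be written as $\chi$ minus a sum of two or more distinct elements of $\Delta_{P_{\chi,u}}$, so the component of $v$ at weight $\chi-\gamma$ is precisely $c_\gamma e_{-\gamma}v_\chi$. If $c_\gamma\neq 0$, then $\chi-\gamma\in\supp(V_H)$, and being a vertex of the ambient polytope it is also a vertex of the $V_H$-polytope adjacent to $\chi$; the standard description of edges at $\chi$ of the $W_H$-orbit polytope forces $\chi-\gamma=s_\eta\chi$ for some root $\eta\in\Delta_H$, whence $\gamma=(\chi,\eta^\vee)\eta$, and since $\gamma$ is itself a single root of the reduced root system $\Delta$, this yields $\gamma=\eta\in\Delta_H$. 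Consequently $\sum c_\gamma e_{-\gamma}\in\mathfrak h$, the exponential lies in $H$, and $\langle v\rangle\in H\langle v_\chi\rangle$. The main obstacle is the reduction to $v_\chi\neq 0$: unlike the minuscule setting of Proposition~\ref{intersection}, not every weight of $V_H$ is extremal, so one cannot use transitivity of $W_H$ on arbitrary weights and must instead combine the Bruhat decomposition (supplying an extremal weight of $V(\omega)$ in $\supp v$) with the general fact that a vertex of a convex polytope lying in a sub-polytope is a vertex of the sub-polytope.
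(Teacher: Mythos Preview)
Your reduction to the case $v_\chi\neq 0$ is correct and essentially identical to the paper's: the Bruhat decomposition together with Lemma~\ref{BGG} produces a vertex $w\omega$ of the ambient polytope lying in $\supp(v)\subset\supp(V_H)$, and an extreme point of the larger polytope contained in the smaller one is automatically extreme for the latter, hence lies in $W_H\chi$.

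The gap is your ``generalization of Corollary~\ref{wtp_u}''. That corollary depends on the (quasi)minuscule hypothesis, and the assertion that the segments from $\chi$ to $\chi-\gamma$ for $\gamma\in\Delta_{P_{\chi,u}}$ are edges of the weight polytope is false for arbitrary $\omega$: the edges through $\chi$ end at $s_\gamma\chi=\chi-\langle\chi,\gamma^\vee\rangle\gamma$, which equals $\chi-\gamma$ only when $\langle\chi,\gamma^\vee\rangle=1$. Already for the adjoint representation of $\mathfrak{sl}_3$ the highest root $\theta=\alpha_1+\alpha_2$ has $\langle\theta,\theta^\vee\rangle=2$, so $\theta-\theta=0$ sits in the interior of the polytope, and moreover $\theta=\alpha_1+\alpha_2$ with $\alpha_1,\alpha_2\in\Delta_{P_{\theta,u}}$. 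Thus higher-order terms of the exponential can contribute to the $(\chi-\gamma)$-component, so your formula $v_{\chi-\gamma}=c_\gamma e_{-\gamma}v_\chi$ fails in general, and the subsequent claim that $\chi-\gamma$ is a vertex of the $V_H$-polytope adjacent to $\chi$ has no basis.

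The paper sidesteps polytope geometry at this step. Assuming for contradiction that some $c_\gamma\neq 0$ with $\gamma\notin\Delta_H^+$, it chooses among such $\gamma$ one, $\gamma_0$, that is \emph{not} expressible as a sum of two or more roots $\mu\in\Delta_{\pp_u}$ with $c_\mu\neq 0$. This minimality (rather than any extremality in a polytope) is what forces the $(\omega-\gamma_0)$-component of $v$ to equal $c_{\gamma_0}e_{-\gamma_0}v_\omega$, nonzero since $\langle\omega,\gamma_0^\vee\rangle>0$. The contradiction then follows from $\supp(V_H)\subset\omega-\sum_{\gamma\in\Delta_H^+}\Bbb Z_{\geq 0}\gamma$, which rules out the weight $\omega-\gamma_0$.
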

\begin{proof} Without the loss of generality we can assume that 
$\chi=\omega$.
Let us denote by $P_{\omega}$  the stabilizer in   $G$ of the line
$\langle v_{\omega} \rangle$.  Since 
$v_{\omega}$ is the highest weight vector for the group $G$, it is highest 
weight vector for $H$. Thus  $V_{H}$ is the irreducible
$H$-module. Let us notice that we can identify  $\supp_H(V_H)$
with the subset in  $\supp (V) \cap (\omega-\sum_{\gamma \in \Delta_H}\Bbb Z_+\gamma)$.
From the inclusion  $\omega \in \supp (V_H)$ for a dominant weight $\omega$,
  it follows that the vertices of the weight polytop of  $V$, that belong to
  $\supp_H(V_H)$ are the vertices of the weight polytop  $V_{H}$.

 Let $\langle v\rangle\in
G/P_{\omega}\cap \Bbb P(V_{H})$. Let us prove that the vector  $v$ 
has a nonzero component of weight
 $w\omega$ for $w\in W_H$
(we recall that $w\omega$ being a vertex of the weight polytop has multiplicity one).
 Indeed $\langle v\rangle\in
\bigcup\limits_{w\in W}B^-wP_\omega/P_\omega \cap \Bbb P(V_{H})$, in particular $\langle v\rangle\in B^-\widetilde{w}P_\omega/P_\omega
\cap \Bbb P(V_{H})$ for some $\widetilde{w} \in W$. By Lemma
\ref{BGG} we get that the component  $v_{\widetilde{w}\omega}$
of vector $v$ is not equal to zero. Since $\langle
v_{\widetilde{w}\omega}\rangle \in \Bbb P(V_{H})$ we can assume that the element 
 $\widetilde{w} \in W$ belong to  $W_H$. Applying the element
$n_{\widetilde{w}}^{-1}$ to the vector $v$ we can assume that its component  $v_{\omega}$ is not equal to zero. Then from Lemma
  \ref{BGG} we get that $\langle v\rangle$ belong to the open cell
  $P_u^{-}P/P$ in $G/P$. Thus $\langle v\rangle=\langle uv_{\omega}\rangle$
  for some $u\in P_u^{-}$. Using the presentation of $u$ as the exponential  map we get:
$$ \langle v\rangle=\langle \exp(\sum \limits_{\gamma \in \Delta_{\pp_u}}c_\gamma
e_{-\gamma})v_{\omega}\rangle.$$ Assume that there exists
$\gamma\notin \Delta_H^+$ such that $c_\gamma\neq 0$. From such roots let us choose the root  $\gamma_0$ that cannot be obtained as the positive integer
linear combination of the roots  $\mu\in \Delta_{\pp_u}$ for which $c_{\mu}\neq 0$.
 Then the component of vector  $v$ of weight $\omega-\gamma_0$,
is equal to $v_{\omega-\gamma_0}=c_{\gamma_0}e_{-\gamma_0}v_{\omega}$.
Let us show that this component is not equal to zero. The weight $\omega$ is dominant
and $\langle\omega;\gamma_0\rangle>0$ (the last equality follows from the fact that $P$ is the stabilizer of the line
spanned by the highest weight vector with the weight $\omega$ and the fact that $\gamma_0\in
\Delta_{\pp_u}$). This implies that it cannot be the lowest weight of the representation of the three dimensional 
subalgebra  generated by the triple
$\{e_{-\gamma_0},h_{\gamma_0},e_{\gamma_0}\}$ thus we get
$e_{-\gamma_0}v_{\omega}\neq 0$.

On the other hand the component
$v_{\omega-\gamma_0}$  cannot be nonzero since  $v\in
V_H$ and $\supp(V_H)\subset \omega-\sum_{\gamma\in \Delta^+_H}\Bbb
Z_{+}\gamma$. Thus we come to the contradiction with the existence of 
 $\gamma_0\notin \Delta_H^+$ such that $c_{\gamma_0}\neq 0$; this implies the inclusion $u\in
P^{-}_u\cap H$. The proof finishes similar to the proof of Proposition  \ref{intersection}.\end{proof}

\end{document}